\definecolor{verylight}{gray}{0.97}
\definecolor{light}{gray}{0.9}
\definecolor{medium}{gray}{0.85}
\definecolor{dark}{gray}{0.6}
\def\NZQ{\mathbb}               
\def\NN{{\NZQ N}}
\def\ZZ{{\NZQ Z}}
\def\RR{{\NZQ R}}
\def\BB{{\NZQ B}}
\def\frk{\mathfrak}               
\def\mm{{\frk m}}
\def\Phi{{\frk N}}
\def\Pc{{\mathcal P}}
\def\opn#1#2{\def#1{\operatorname{#2}}} 
\opn\chara{char} \opn\length{\ell} \opn\pd{pd} \opn\rk{rk}
\opn\projdim{proj\,dim} \opn\injdim{inj\,dim} \opn\rank{rank}
\opn\depth{depth} \opn\grade{grade} \opn\height{height}
\opn\size{size}
\opn\embdim{emb\,dim} \opn\codim{codim}
\opn\Tr{Tr} \opn\bigrank{big\,rank}
\opn\superheight{superheight}\opn\lcm{lcm}
\opn\trdeg{tr\,deg}
\opn\reg{reg} \opn\lreg{lreg} \opn\ini{in} \opn\lpd{lpd}
\opn\size{size}\opn{\mult}{mult}
\opn{\Cl}{Cl}
\opn\div{div} \opn\Div{Div} \opn\cl{cl} \opn\Cl{Cl}
\opn\Spec{Spec} \opn\Supp{Supp} \opn\supp{supp} \opn\Sing{Sing}
\opn\Ass{Ass} \opn\Min{Min} \opn\cl{cl}
\opn\Ann{Ann} \opn\Rad{Rad} \opn\Soc{Soc}
\opn\Syz{Syz} \opn\Im{Im} \opn\Ker{Ker} \opn\Coker{Coker}
\opn\Am{Am} \opn\Hom{Hom} \opn\Tor{Tor} \opn\Ext{Ext}
\opn\End{End} \opn\Aut{Aut} \opn\id{id} \opn\ini{in}
\opn\nat{nat}
\opn\pff{pf}
\opn\Pf{Pf} \opn\GL{GL} \opn\SL{SL} \opn\mod{mod} \opn\ord{ord}
\opn\Gin{Gin}
\opn\Hilb{Hilb}\opn\adeg{adeg}\opn\std{std}\opn\ip{infpt}
\opn\Pol{Pol}
\opn\sat{sat}
\opn\Var{Var}
\opn\Gen{Gen}
\opn\lex{lex}
\opn\div{div}
\opn\aff{aff} \opn\con{conv} \opn\relint{relint} \opn\st{st}
\opn\lk{lk} \opn\cn{cn} \opn\core{core} \opn\vol{vol}
\opn\link{link} \opn\star{star}
\opn\gr{gr}
\def\Bc{{\mathcal B}}
\def\Ic{{\mathcal I}}
\def\Fc{{\mathcal F}}
\def\Cc{{\mathcal C}}
\def\Ec{{\mathcal E}}
\def\Qc{{\mathcal Q}}
\def\Rc{{\mathcal R}}
\def\Mc{{\mathcal M}}
\def\pot#1#2{#1[\kern-0.28ex[#2]\kern-0.28ex]}
\opn\dirlim{\underrightarrow{\lim}}
\opn\inivlim{\underleftarrow{\lim}}
\let\sect=\cap
\let\iso=\cong
\let\to=\rightarrow
\def\Implies{\ifmmode\Longrightarrow \else
        \unskip${}\Longrightarrow{}$\ignorespaces\fi}
\def\implies{\ifmmode\Rightarrow \else
        \unskip${}\Rightarrow{}$\ignorespaces\fi}
\def\iff{\ifmmode\Longleftrightarrow \else
        \unskip${}\Longleftrightarrow{}$\ignorespaces\fi}
\newtheorem{Theorem}{Theorem}[section]
\newtheorem{Lemma}[Theorem]{Lemma}
\newtheorem{Corollary}[Theorem]{Corollary}
\newtheorem{Remark}[Theorem]{Remark}
\newtheorem{Example}[Theorem]{Example}
\let\epsilon\varepsilon
\let\phi=\varphi
\let\kappa=\varkappa
\def\qed{\ifhmode\textqed\fi
      \ifmmode\ifinner\quad\qedsymbol\else\dispqed\fi\fi}
\def\textqed{\unskip\nobreak\penalty50
       \hskip2em\hbox{}\nobreak\hfil\qedsymbol
       \parfillskip=0pt \finalhyphendemerits=0}
\def\dispqed{\rlap{\qquad\qedsymbol}}
\opn\dis{dis}
\def\pnt{{\raise0.5mm\hbox{\large\bf.}}}
\opn\Lex{Lex}
\opn\int{int}
\newcommand{\inD}[1][\relax]{\def\argone{#1}\def\temprelax{\relax}
  \ifx\argone\temprelax\right.\else\,\middle|#1\right.{}\fi}
\newif\ifbinary
\begin{document}

\title{Ideals generated by 2-minors, collections of cells and stack polyominoes}

\author{Ayesha Asloob Qureshi}

\address{Ayesha Asloob Qureshi, Abdus Salam School of Mathematical Sciences,
GC University, Lahore.
68-B, New Muslim Town, Lahore 54600, Pakistan} \email{ayesqi@gmail.com}

\begin{abstract}
In this paper we study ideals generated by quite general sets  of $2$-minors of an $m\times n$-matrix of indeterminates. The sets of $2$-minors are defined  by collections of cells and include 2-sided ladders.  For convex collections of cells it is shown that the attached  ideal of $2$-minors is a Cohen--Macaulay prime ideal. Primality is also shown for collections of cells whose connected components are row or column convex. Finally the class group of the ring attached to a stack polyomino and its canonical class is computed, and a classification of the Gorenstein stack polyominoes is given.
\end{abstract}
\subjclass{13C05, 13C13, 13P10}

\maketitle

\section*{Introduction}

Let $K$ be a field and $X=(x_{ij})_{i=1,\ldots,m \atop j=1,\ldots,n}$ be a matrix of indeterminates. In this paper we study ideals generated by quite general sets of 2-minors of $X$. For any integer $1\leq t\leq \min\{m,n\}$, the ideal generated by all   $t$-minors of $X$  is  well understood, see \cite{Ho} and \cite{BV}, and more generally the ideals generated by all $t$-minors of a one and two sided ladders, see for example \cite{A}. Motivated by applications in algebraic statistics, ideals generated by even more general sets of minors have been investigated, including ideals generated by adjacent 2-minors, see \cite{HSS}, \cite{HH} and \cite{OH2}, or ideals generated by an arbitrary set of 2-minors  in an $2 \times n$-matrix \cite{HHH}.

Given an ideal $I$ generated by an arbitrary set of 2-minors of $X$, the question arises when $I$ is a prime or a  radical ideal  and what are its primary components. As shown in \cite{HHH}, $I$ is always radical if $X$ is a $2 \times n$ matrix and the authors give the explicit primary decomposition of such ideals. The problem becomes already much more complicated if $m,n \geq 3$. Easy examples show that $I$ need not to be radical in general.

In this paper we study ideals generated by inner 2-minors of a collection of cells. A cell is a unit square of $\RR^2$ whose corners are elements in $\NN^2$. A collection $\Pc$ of cells is a finite union of cells. We denote by $V(\Pc)$ the set of corners belonging to the cells of $\Pc$. In order to define the ideal of inner 2-minors of a collection of cells $\Pc$ we introduce some terminology. First we introduce the partial order on $\NN^2$ given by $(i,j) \leq (k,l)$ if and only if $i \leq k$ and $j \leq l$. The set $\NN^2$ together with this partial order is a distributive lattice. Let $a,b \in \NN^2$ with $a \leq b$, then the set $[a,b]= \{ c \in \NN^2|\; a \leq c \leq b\}$ is an interval of $\NN^2$. If $a=(i,j)$ and $b=(k,l)$, then the interval $[a,b]$ is called a proper interval if $i<k$ and $j<l$ and the elements $a,b,c,d$ are called the corners of the proper interval $[a,b]$ where $c=(k,j)$ and $d=(i,l)$. In particular, we call $a$, $b$ the diagonal corners of $[a,b]$ and $c$, $d$ the anti-diagonal corners of $[a,b]$. To each collection of cells $\Pc \subset \NN^2$, we attach an ideal $I_{\Pc}$ as follows. Let $K$ be a field and $S$ be the polynomial ring over $K$ in the variables $x_{a}$ with $a \in V(\Pc)$. To each proper interval $[a,b]$ of $\NN^2$, we assign the binomial $f_{a,b}=x_{a} x_{b} - x_{c} x_{d}$, where $c$ and $d$ are the anti-diagonal corners of $[a,b]$. A proper interval $[a,b]$ is called an inner interval of $\Pc$ if all cells of $[a,b]$ belong to $\Pc$. The binomial $f_{a,b}$ is called an inner 2-minor of $\Pc$, if $[a,b]$ is an inner interval of $\Pc$. We denote by  $I_{\Pc} \subset S$ the ideal generated by the inner 2-minors of $\Pc$ and  by $K[\Pc]$ the quotient ring $S/I_{\Pc}$.

The class of ideals attached to a collection of cells includes, for example, the ideals of 2-minors of two sided ladders, but it is much more general. Interesting classes of collections of  cells are the so-called polyominoes that are well studied in various combinatorial contexts.  A collection of cells $\Pc$ is called a polyomino if it is a connected  collection of cells which means that for  any two cells $A,B\in \Pc$ there exists   a sequence of cells $C_1,\ldots, C_m$  with $C_1=A$, $C_m=B$,  and for all $i$, the cells $C_i$ and $C_{i+1}$ have an edge in common.

In Section~\ref{basis} of this paper we introduce some basic concepts related to collection of cells. In particular we introduce column convex, row convex and convex collection of cells. The first main result of this paper is stated in Section~\ref{convexcollection} where it is shown that $K[\Pc]$ is a normal Cohen--Macaulay domain of dimension $|V(\Pc)| - |\Pc|$, if $\Pc$ is convex.

In Section~\ref{embedding}, we define for any collection of cells $\Pc$ a natural toric ring $T_{\Pc}$ and a natural $K$-algebra homomorphism $K[\Pc] \rightarrow T_{\Pc}$. We denote by $\mathfrak{C}$ the class of collect\-ion of cells for which this $K$-algebra homomorphism is an isomorphism. It is shown in Corollary~\ref{prime} that $K[\Pc]$ is domain if and only if $\Pc \in \mathfrak{C}$. We conjecture that $\Pc \in \mathfrak{C}$,  if $\Pc$ is a simple collection of cells. Roughly speaking $\Pc$ is simple if it is connected and has no holes, see Section~\ref{basis} for the precise definition. As a partial result we obtain in Theorem~\ref{colconvex} that a simple collection of cells $\Pc$ belongs to  $\mathfrak{C}$ if each connected component is row or column convex.

As shown in Section~\ref{convexcollection}, $K[\Pc]$ is a normal domain if $\Pc$ is convex, and hence it is of interest to compute the class group of $K[\Pc]$ in this case. In Section~\ref{stackpolyomino}, this is done for a special class of convex collection of cells,  namely for stack polyominoes. In a first step we show in Corollary~\ref{gbasisstcak}, that $I_{\Pc}$ has a quadratic Gr\"obner basis if $\Pc$ is a stack polyomino. Then in Corollary~\ref{classgroup} it is shown that $\Cl(K[\Pc])$ is free. Its rank is determined by the inner corners of $\Pc$. Finally in Theorem~\ref{canonicalclass}, we determine the canonical class of $K[\Pc]$. As a consequence, all Gorenstein stack polyominoes are classified.


\section{Collections of cells}
\label{basis}

In this section we consider collections of cells and  polyominoes to which in the following sections binomial ideals will attached. For this purpose and for later applications we have to introduce some concepts and notation.

We consider on $\NN^2$ the natural partial order defined as follows: $(i,j) \leq (k,l)$ if and only if $i \leq k$ and $j \leq l$.
The set $\NN^2$ together with this partial order is a distributive lattice. Let $a,b \in \NN^2$ with $a \leq b$,
then the set $[a,b]= \{ c \in \NN^2|\; a \leq b \leq c\}$ is an interval of $\NN^2$. If $a=(i,j)$ and $b=(k,l)$,
then the interval $[a,b]$ is called a {\em proper interval} if $i<k$ and $j<l$, and the elements $a,b $ together with the elements $c=(k,j)$ and $d=(i,l)$ are called
the {\em corners} of the proper interval $[a,b]$. The elements  $a$, $b$ are the diagonal corners  and the elements $c$, $d$ the
anti-diagonal corners of $[a,b]$. We say that  $a$ and $b$ are  in {\em horizontal (vertical) position}, if $j=l$ ($i=k$).

The interval $C=[a,b]$ with $b=a+(1,1)$ is called a {\em cell}  of $\NN^2$ (with lower left corner $a$). It may be viewed as a unit square of $\RR^2$ whose corners are positive integer vectors. The elements (corners) of $[a,b]$ are called the {\em vertices} of $C$. We denote the set of vertices of $C$ by $V(C)$. Let $c$, $d$ be the anti-diagonal corners of $C$, then the edges of $C$ are the sets  $\{a,c\}$, $\{a,d\}$, $\{b,c\}$ and $\{b,d\}$. We denote the set of edges of $C$ by $E(C)$.

Let $[a,b]$ be a proper interval in $\NN^2$ with $a=(i,j)$ and $b=(k,l)$. We say a cell $C$ with lower left corner $(r,s)$ belongs to $[a,b]$ if
\begin{eqnarray} \label{inside}
i\leq r \leq k-1 \quad \text{and} \quad j \leq s \leq l-1.
\end{eqnarray}
The cell $C$ is called a {\em border cell} of $[a,b]$ if one of the inequalities in (\ref{inside}) is an equality.

Let $A$ and $B$ be two cells of $\NN^2$ with lower left corners $(i,j)$ and $(k,l)$. Then the {\em cell interval},  denoted by $[A,B]$,  is the set
 \[
 [A,B]=\{E \: \text{$E \in \NN^2$ with lower left corner $(r,s)$, for $i\leq r \leq k$, $ j \leq s \leq l$}\}
 \]
If $(i,j)$ and $(k,l)$ are in horizontal position,  then the cell interval $[A,B]$ is called a {\em horizontal} cell interval. Similarly one defines a vertical cell interval.

\medskip
Let $\Pc$ be a finite collection of cells of $\NN^2$. We set $V(\Pc)=\bigcup_{C \in \Pc} V(C)$ and call it the {\em vertex set} of $\Pc$,  and we set $E(\Pc)=\bigcup_{C \in \Pc} E(C)$ and call it the {\em edge set} of $\Pc$. In this paper, we consider only finite collection of cells of $\NN^2$.

A vertex $a \in V(\Pc)$ is called an {\em interior vertex} of $\Pc$ if $a$ is a vertex of four distinct cells of $\Pc$, otherwise it is called {\em boundary vertex} of $\Pc$. The {\em interior} of $\Pc$ , denoted by $\int(\Pc)$, is the  set of all interior vertices of $\Pc$. The set $\partial \Pc=V(\Pc) \setminus \int(\Pc)$ is called the {\em boundary} of $\Pc$. In Figure~\ref{interior},the fat dots mark the interior vertices of $\Pc$, the other vertices are the boundary vertices of $\Pc$.

\begin{figure}[hbt]
\begin{center}
\psset{unit=0.7cm}
\begin{pspicture}(-2,1)(0,5)
\pspolygon[style=fyp,fillcolor=light](-1,1)(0,1)(0,2)(-1,2)
\pspolygon[style=fyp,fillcolor=light](-2,1)(-1,1)(-1,2)(-2,2)
\pspolygon[style=fyp,fillcolor=light](-1,2)(0,2)(0,3)(-1,3)
\pspolygon[style=fyp,fillcolor=light](-2,2)(-1,2)(-1,3)(-2,3)
\pspolygon[style=fyp,fillcolor=light](-2,3)(-1,3)(-1,4)(-2,4)
\pspolygon[style=fyp,fillcolor=light](-1,3)(0,3)(0,4)(-1,4)
\pspolygon[style=fyp,fillcolor=light](0,3)(1,3)(1,4)(0,4)
\pspolygon[style=fyp,fillcolor=light](0,2)(1,2)(1,3)(0,3)
\pspolygon[style=fyp,fillcolor=light](-3,1)(-2,1)(-2,2)(-3,2)
\pspolygon[style=fyp,fillcolor=light](-3,2)(-2,2)(-2,3)(-3,3)
\rput(-1,2){$\bullet$}
\rput(-1,3){$\bullet$}
\rput(0,3){$\bullet$}
\rput(-2,2){$\bullet$}
\end{pspicture}
\end{center}
\caption{Interior and boundary of $\Pc$}\label{interior}
\end{figure}

We call $\Pc$ {\em row convex}, if the horizontal cell interval $[A,B]$ is contained in $\Pc$ for any two cells $A$ and $B$ of $\Pc$ whose  lower left corners are in horizontal position. Similarly one defines {\em column convex}. A collection of cells $\Pc$ is called {\em convex} if it is row and column convex.

\medskip
Let $C$ and $D$ be two cells of $\Pc$. Then $C$ and $D$ are {\em connected}, if there is a sequence of cells of $\Pc$ given by $C= C_1, \ldots, C_m =D$ such that $C_i \cap C_{i+1}$ is an edge for $i=1, \ldots, m-1$. If in addition, $C_i \neq C_j$ for all $i \neq j$, then $\mathcal{C}$ is called a {\em path} (connecting $C$ and $D$). The collection of cells $\Pc$ is called a {\em polyomino} if any two cells of $\Pc$ are connected, see Figure~\ref{polyomino}. We notice that each connected component of a finite collection of cells $\Pc$ is a polyomino.

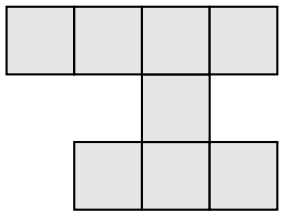
\begin{figure}[hbt]
\begin{center}
\psset{unit=0.9cm}
\begin{pspicture}(4.5,-0.5)(4.5,3.5)
\rput(-1,0){
\pspolygon[style=fyp,fillcolor=light](4,0)(4,1)(5,1)(5,0)
\pspolygon[style=fyp,fillcolor=light](5,0)(5,1)(6,1)(6,0)
\pspolygon[style=fyp,fillcolor=light](3,2)(3,3)(4,3)(4,2)
\pspolygon[style=fyp,fillcolor=light](5,1)(5,2)(6,2)(6,1)
\pspolygon[style=fyp,fillcolor=light](4,2)(4,3)(5,3)(5,2)
\pspolygon[style=fyp,fillcolor=light](5,2)(5,3)(6,3)(6,2)
\pspolygon[style=fyp,fillcolor=light](6,0)(6,1)(7,1)(7,0)
\pspolygon[style=fyp,fillcolor=light](6,2)(6,3)(7,3)(7,2)
}
\end{pspicture}
\end{center}
\caption{A polyomino}\label{polyomino}
\end{figure}

Since $\Pc$ consists of finitely many cells, there exists a proper interval $[a,b] \subset \NN^2$ such that $V(\Pc) \subset \int([a,b])$. The collection of cells $\Pc$ is called {\em simple} if any cell $C$ of $[a,b]$ which does not belong to $\Pc$ is connected  to a border cell $D$ of $[a,b]$ by a path $C=C_1, \ldots, C_m=D $ such that $C_i \notin \Pc$ for all $i = 1, \ldots, m$. Intuitively this means that a simple collection of cells has no holes, see Figure~\ref{simple}.

\begin{figure}[hbt]
\begin{center}
\psset{unit=0.7cm}
\begin{pspicture}(-0.5,0)(-0.5,4)
\rput(-4,0){
\pspolygon[style=fyp,fillcolor=light](-1,1)(0,1)(0,2)(-1,2)
\pspolygon[style=fyp,fillcolor=light](-2,1)(-1,1)(-1,2)(-2,2)
\pspolygon[style=fyp,fillcolor=light](0,1)(1,1)(1,2)(0,2)
\pspolygon[style=fyp,fillcolor=light](-2,2)(-1,2)(-1,3)(-2,3)
\pspolygon[style=fyp,fillcolor=light](-2,3)(-1,3)(-1,4)(-2,4)
\pspolygon[style=fyp,fillcolor=light](-1,3)(0,3)(0,4)(-1,4)
\pspolygon[style=fyp,fillcolor=light](0,3)(1,3)(1,4)(0,4)
\pspolygon[style=fyp,fillcolor=light](0,2)(1,2)(1,3)(0,3)
\rput(-0.5,0){Not simple}
}
\rput(0,2)
{
\pspolygon[style=fyp,fillcolor=light](2.8,0)(2.8,1)(3.8,1)(3.8,0)
\pspolygon[style=fyp,fillcolor=light](3.8,-1)(3.8,0)(4.8,0)(4.8,-1)
\pspolygon[style=fyp,fillcolor=light](4.8,0)(4.8,1)(5.8,1)(5.8,0)
\pspolygon[style=fyp,fillcolor=light](3.8,1)(3.8,2)(4.8,2)(4.8,1)
\pspolygon[style=fyp,fillcolor=light](3.8,0)(3.8,1)(4.8,1)(4.8,0)
\rput(4.2,-2){Simple}
}
\end{pspicture}
\end{center}
\caption{}\label{simple}
\end{figure}

We call $\Pc$ {\em weakly connected} if for any two cells $C$ and $D$ of $\Pc$, there exists a sequence of cells of $\Pc$ given by $C= C_1, \ldots, C_m =D$ such that $C_i \cap C_{i+1} \neq \emptyset$, for $i=1, \ldots, m-1$. Figure~\ref{weaklyconnected} displays a weakly connected collection of cells with two connected components.

\begin{figure}[hbt]
\begin{center}
\psset{unit=0.7cm}
\begin{pspicture}(4.5,0)(4.5,3)
\pspolygon[style=fyp,fillcolor=light](6,0)(6,1)(7,1)(7,0)
\pspolygon[style=fyp,fillcolor=light](7,0)(7,1)(8,1)(8,0)
\pspolygon[style=fyp,fillcolor=light](8,0)(8,1)(9,1)(9,0)
\pspolygon[style=fyp,fillcolor=light](5,1)(5,2)(6,2)(6,1)
\pspolygon[style=fyp,fillcolor=light](4,1)(4,2)(5,2)(5,1)
\pspolygon[style=fyp,fillcolor=light](8,1)(8,2)(9,2)(9,1)
\pspolygon[style=fyp,fillcolor=light](3,1)(3,2)(4,2)(4,1)
\pspolygon[style=fyp,fillcolor=light](2,1)(2,2)(3,2)(3,1)
\pspolygon[style=fyp,fillcolor=light](1,1)(1,2)(2,2)(2,1)
\rput(1.5,1.5){$C$}
\rput(8.5,1.5){$D$}
\end{pspicture}
\end{center}
\caption{A weakly connected collection of cells}\label{weaklyconnected}
\end{figure}
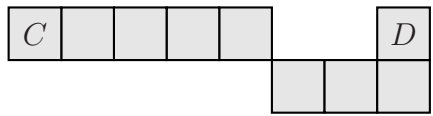

The following  lemmata on collections of cells will be needed in the later sections.

\begin{Lemma}
\label{interval}
Let $\Pc$ be a weakly connected and convex collection of cells, and let $a,b \in V(\Pc)$ be two vertices which are in horizontal or vertical position. Then  $[a,b] \subset V(\Pc)$.
\end{Lemma}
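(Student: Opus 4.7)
The plan is to reduce the lemma to the horizontal case, the vertical one being symmetric (with column convexity in place of row convexity). Write $a=(i,j)$ and $b=(k,j)$ with $i<k$. For each $s\in\NN$ put
\[
R_s \;:=\; \{\,r\in\NN : \text{the cell with lower-left corner } (r,s) \text{ belongs to } \Pc\,\}.
\]
By row convexity every non-empty $R_s$ is an interval $[p_s,q_s]$ of integers. A point $(r,j)$ is a vertex of some cell of $\Pc$ if and only if that cell has lower-left corner in $\{(r-1,j-1),(r,j-1),(r-1,j),(r,j)\}$, so
\[
(r,j)\in V(\Pc) \iff r\in\tilde R_{j-1}\cup\tilde R_j,\qquad \tilde R_s:=R_s\cup(R_s+1),
\]
and $\tilde R_s=[p_s,q_s+1]$ is again an interval when $R_s=[p_s,q_s]$. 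It therefore suffices to prove $[i,k]\subseteq \tilde R_{j-1}\cup\tilde R_j$.

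Since $a,b\in V(\Pc)$, both $i$ and $k$ lie in $\tilde R_{j-1}\cup\tilde R_j$. If they belong to the same $\tilde R_s$, then, being an interval, $\tilde R_s$ already contains $[i,k]$ and we are done. The decisive case is, after possibly exchanging $j$ and $j-1$, $i\in\tilde R_j\setminus\tilde R_{j-1}$ and $k\in\tilde R_{j-1}\setminus\tilde R_j$; in this situation the claim reduces to showing that the two non-empty intervals $\tilde R_j$ and $\tilde R_{j-1}$ overlap or are adjacent, so that their union is an interval of $\NN$ containing both $i$ and $k$.

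To establish this, weak connectivity must be invoked. Partition $\Pc$ into those cells whose lower-left corner has $y$-coordinate $\geq j$ and those with $y$-coordinate $\leq j-1$; both parts are non-empty because $R_j$ and $R_{j-1}$ are. Two cells with lower-left corners $(r,s)$ and $(r',s')$ share a vertex only when $|s-s'|\le 1$ and $|r-r'|\le 1$. Hence any pair of cells from the two parts that share a vertex must link some cell with lower-left corner $(r_1,j)$ to one with $(r_2,j-1)$ satisfying $|r_1-r_2|\le 1$, and the weak connectivity of $\Pc$ produces at least one such pair. This gives $r_1\in R_j$ and $r_2\in R_{j-1}$ with $|r_1-r_2|\le 1$, from which one checks at once that $\min\tilde R_{j-1}\le\max\tilde R_j+1$ (and, symmetrically, $\min\tilde R_j\le\max\tilde R_{j-1}+1$). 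Therefore $\tilde R_{j-1}\cup\tilde R_j$ is a single interval of $\NN$, and as it contains both $i$ and $k$ it contains $[i,k]$.

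The main obstacle, I expect, is precisely this weak-connectivity bridging step. It rests on the simple but essential observation that two cells in $\NN^2$ share a vertex only when their lower-left corners are at distance at most $1$ in each coordinate, which is exactly what prevents a weakly connected convex collection from having the row-$j$ and row-$(j-1)$ strips horizontally far apart. Row convexity alone controls each single row and would not suffice without the weak connectivity to tie adjacent rows together.
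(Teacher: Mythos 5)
Your proof is correct, and it takes a genuinely different and in some ways cleaner route than the paper's. The paper picks cells $C\ni a$, $D\ni b$, splits $\Pc$ by the horizontal line through $[a,b]$, and then runs a two--way case analysis (both cells on the same side, handled by convexity; cells on opposite sides, handled by locating a crossing pair in a weakly connecting chain, followed by either induction on $\size([a,b])$ or a cell--interval argument). You instead describe the slice $\{r:(r,j)\in V(\Pc)\}$ explicitly as $\tilde R_{j-1}\cup\tilde R_j$, observe that each $\tilde R_s$ is an integer interval by row convexity, and show that weak connectivity forces the two relevant intervals to overlap or abut, so the slice itself is a single interval; the conclusion is then immediate, with no induction and no case split on which cell lies on which side. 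The two proofs lean on the same two hypotheses in the same places --- convexity controls each row, weak connectivity ties adjacent rows together --- but yours isolates that division of labor more transparently. One small remark on your wording: the bridging step actually yields the slightly stronger inequality $\min\tilde R_{j-1}\le\max\tilde R_j$ (not just $\le\max\tilde R_j+1$), i.e.\ the two intervals genuinely overlap rather than merely being adjacent, because $r_1\le q_j$ already gives $\min\tilde R_{j-1}=p_{j-1}\le r_2\le r_1+1\le q_j+1=\max\tilde R_j$; either way the union is a single interval and your conclusion holds. It is also worth noting explicitly that $R_j\ne\emptyset$ and $R_{j-1}\ne\emptyset$ follow from $i\in\tilde R_j$ and $k\in\tilde R_{j-1}$, which you use implicitly when asserting both halves of the partition are non-empty.
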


\begin{proof}
Let $a, b \in V(\Pc)$ in horizontal position. We may assume that $|[a,b]| > 2$, otherwise there is nothing to show. There exist two cells $C$ and $D$ in $\Pc$ such that $a$ is a vertex of $C$ and $b$ is a vertex of $D$. The horizontal line $L$ which contains the interval $[a,b]$ divides $\Pc$ in a lower and upper part. If the cells $C$ and $D$ both belong to the upper or to the lower part, then convexity of $\Pc$ gives $[C,D] \subset \Pc$. It shows $[a,b] \subset V(\Pc)$.

Otherwise we may assume that $C$ belongs to the lower part and $D$ belongs to the upper part of $\Pc$. We then use the fact that there exists a sequence of cells $C=C_1, C_2 \ldots, C_r=D$ such that $V(C_i) \cap V(C_{i+1}) \neq \emptyset$ for $i=1,\ldots, r-1$. This sequence has to cross the line $L$, that is, there exists an index $i$ such that $C_i$ belongs to the lower part of $\Pc$ and $C_{i+1}$ belongs to the upper part of $\Pc$. In particular, both $C_i$ and $C_{i+1}$ have an edge whose vertices belong to  $L$. If  $V(C_i)\sect [a,b]\neq \emptyset$ or $V(C_{i+1})\sect [a,b]\neq \emptyset$, then there exists $c\in [a,b]$ with  $c\neq a,b$ which belongs to $V(\Pc)$. Induction on the length of the interval, concludes the proof in this case. Otherwise, by using convexity of $\Pc$ we see that $[C_i, C]$ and $[D, C_{i+1}]$ are horizontal cell intervals of $\Pc$ such that either $[a,b] \subset V([C_i, C])$ or $[a,b] \subset V([D, C_{i+1}])$. This completes the proof.

The arguments are similar for the case when $a$ and $b$ are in vertical position.
\end{proof}

\begin{Lemma}
\label{corners}
Let $\Pc$ be a weakly connected and convex, and $[g,h]$ be a proper interval in $\NN^2$. If the corners of $[g,h]$ belong to $V(\Pc)$, then the cells of $[g,h]$ belong to $\Pc$.
\end{Lemma}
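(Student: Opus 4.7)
The plan is to first establish that every lattice point of $[g,h]$ lies in $V(\Pc)$, and then to deduce the cell statement by a single-cell argument that is extracted from the proof of Lemma~\ref{interval}.

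For the first step, I would apply Lemma~\ref{interval} to the four corner pairs of $[g,h]$: writing $g=(i,j)$, $h=(k,l)$, $c=(k,j)$, $d=(i,l)$, the pairs $(g,c)$, $(d,h)$ are in horizontal position and the pairs $(g,d)$, $(c,h)$ are in vertical position, so Lemma~\ref{interval} places the four sides of $[g,h]$ inside $V(\Pc)$. For each $s$ with $j\le s\le l$ the vertices $(i,s)$ and $(k,s)$ are then in $V(\Pc)$ and in horizontal position, and a further application of Lemma~\ref{interval} gives $(r,s)\in V(\Pc)$ for every $r\in[i,k]$. Hence every lattice point of $[g,h]$ belongs to $V(\Pc)$.

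For the second step, fix a cell $F$ of $[g,h]$ with lower-left corner $(r,s)$ and vertices $\alpha=(r,s)$, $\gamma=(r+1,s)$, $\delta=(r,s+1)$, $\eta=(r+1,s+1)$, all of which are in $V(\Pc)$ by the first step. I would then apply the reasoning in the proof of Lemma~\ref{interval} to the horizontal adjacent pair $\{\alpha,\gamma\}$: in each of its sub-cases (the cells containing $\alpha$ and $\gamma$ both above the line $y=s$, both below, or on opposite sides linked by a weakly connected path) one obtains a horizontal cell interval $\Lambda\subset\Pc$ whose vertex set contains $\{\alpha,\gamma\}$. Since $\alpha$ and $\gamma$ are adjacent, $\Lambda$ must contain the unique cell having $\{\alpha,\gamma\}$ as an edge, which is either $F$ itself (if $\Lambda$ lies at row $s$) or the cell $F_S$ immediately below $F$ (if $\Lambda$ lies at row $s-1$). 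The same argument applied to the pair $\{\delta,\eta\}$ gives either $F$ or the cell $F_N$ immediately above $F$. If $F$ appears in either conclusion we are done; otherwise $F_S$ and $F_N$ are both in $\Pc$, in vertical position and sandwiching $F$, so column convexity of $\Pc$ forces $F\in \Pc$.

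The main obstacle will be to extract rigorously from the proof of Lemma~\ref{interval} the statement that a horizontal cell interval in $\Pc$ whose vertex set contains any adjacent pair $\{\alpha,\gamma\}\subset V(\Pc)$ always exists. This is immediate in the two ``same side'' cases via the explicit cell interval $[C_\alpha,C_\gamma]\subset\Pc$ produced there, but in the mixed case one has to check that at least one of the two horizontal strips $[C_i,C]$ and $[D,C_{i+1}]$ constructed via the vertex-sharing path of cells in $\Pc$ actually covers both $\alpha$ and $\gamma$ in its vertex set; once this is secured, the rest of the proof is routine convexity bookkeeping.
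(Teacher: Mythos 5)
Your proposal is correct, and it takes a genuinely different route from the paper's proof of Lemma~\ref{corners}. Both arguments begin identically by using Lemma~\ref{interval} to conclude $[g,h]\subset V(\Pc)$. For the cell step, however, the paper fixes a hypothetical missing cell $E$ with corners $a,b,c,d$, chooses cells $A,B,C,D\in\Pc$ containing them, and splits on whether any two of $A,B,C,D$ are in horizontal or vertical position: if so, convexity directly forces $E\in\Pc$; if not, it reduces to a single diagonal configuration and uses the weakly-connecting path from $A$ to $C$ to produce a horizontal or vertical cell interval of $\Pc$ that sweeps through $E$. You instead run the crossing mechanism of Lemma~\ref{interval} once for the bottom edge $\{\alpha,\gamma\}$ and once for the top edge $\{\delta,\eta\}$ of $F$; each run lands a horizontal cell interval of $\Pc$ through the chosen edge, hence forces $F$ or its vertical neighbour $F_S$ (resp.\ $F_N$) into $\Pc$, and you close with one application of column convexity to the pair $F_S,F_N$. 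This keeps the case analysis one-dimensional at a time and replaces the paper's four-cell configuration argument with the cleaner ``sandwich'' observation. As for the obstacle you flagged in the mixed case: it does close, and the mechanism is exactly the shared-vertex condition $V(C_i)\cap V(C_{i+1})\neq\emptyset$ at the crossing of the line $y=s$. Writing the lower-left corners of $C_i$ and $C_{i+1}$ as $(q,s-1)$ and $(q',s)$, a shared vertex lies on the line and forces $|q-q'|\le 1$; combined with $C_\alpha$ having lower-left first coordinate in $\{r-1,r\}$ and $C_\gamma$ in $\{r,r+1\}$, the only way for both horizontal strips $[C_\alpha,C_i]$ and $[C_{i+1},C_\gamma]$ (or their reverses) to miss the column at $r$ would require $q\le r-1$ and $q'\ge r+1$, contradicting $|q-q'|\le 1$. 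So one of the two strips necessarily contains the cell with lower-left corner $(r,s-1)$ or $(r,s)$, as you need.
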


\begin{proof}
It is clear by Lemma~\ref{interval} that if the corners of $[g,h]$ belong to $V(\Pc)$, then $[g,h] \subset V(\Pc)$. Suppose that there exists a cell $E$ of $[g,h]$ which does not belong to $\Pc$. Let $a=(i,j)$, $b=(i+1,j)$, $c=(i+1,j+1)$, $d=(i,j+1)$ be the vertices of $E$. Since these vertices belong to $V(\Pc)$, there exist cells $A,B,C,D$ in $\Pc$ such that $a \in A$, $b \in B$, $c \in C$ and $d \in D$. If two of these cells are in horizontal or vertical positions then from the fact that $\Pc$ is convex one easily deduces that $E \in \Pc$. Otherwise, up to rotation, the only possible configuration of the cells $A,B,C,D$ is shown in Figure~\ref{ABCD}.

\begin{figure}[hbt]
\begin{center}
\psset{unit=1cm}
\begin{pspicture}(4.5,0.5)(4.5,2.5)
\pspolygon[style=fyp,fillcolor=light](4,2)(4,3)(5,3)(5,2)
\pspolygon[style=fyp,fillcolor=light](3,0)(3,1)(4,1)(4,0)
\pspolygon[style=fyp,fillcolor=light](5,1)(5,2)(6,2)(6,1)
\rput(3.5,0.5){A}
\rput(5.5,1.5){B}
\rput(4.5,2.5){D=C}
\rput(4,1){$\bullet$}
\rput(4,2){$\bullet$}
\rput(5,2){$\bullet$}
\rput(5,1){$\bullet$}
\rput(3.8,1.2){a}
\rput(3.8,2.2){d}
\rput(5.2,2.2){c}
\rput(5.2,0.8){b}

\end{pspicture}
\end{center}
\caption{}\label{ABCD}
\end{figure}

By using the assumption that $\Pc$ is weakly connected there exists a sequence of cells $A=F_1, F_2 \ldots, F_m=C$ such that $V(F_i) \cap V(F_{i+1}) \neq \emptyset$ for $i=1,\ldots, m-1$. It implies that there exists at least one $F_i$ such that $[F_i, C]$ is a vertical cell interval of $\Pc$ or $[F_i, B]$ is a horizontal cell interval of $\Pc$. Again, by using the fact that $\Pc$ is convex, we have $E \in P$, a contradiction.
\end{proof}

\begin{Lemma}
Let $\Pc$ be a simple collection of cells and $\Pc_1$ and $\Pc_2$ be two connected components of $\Pc$. Then $|\Pc_1\cap \Pc_2| \leq 1$.
\end{Lemma}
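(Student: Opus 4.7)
The plan is to argue by contradiction using planar topology. Suppose that $\Pc_1$ and $\Pc_2$ share at least two distinct vertices $a$ and $b$ in $V(\Pc_1)\cap V(\Pc_2)$. Since $\Pc_1$ and $\Pc_2$ are different edge-connected components of $\Pc$, no cell of $\Pc_1$ shares an edge with any cell of $\Pc_2$. Writing $P_i:=\bigcup_{C\in\Pc_i}C\subset\RR^2$ for the geometric realization, the intersection $P_1\cap P_2$ coincides with the finite set $V(\Pc_1)\cap V(\Pc_2)$, a discrete collection of points.

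My first step is to produce a simple closed lattice polygon. Since each $\Pc_i$ is a polyomino, there is a lattice path $\gamma_i$ from $a$ to $b$ running along cell edges inside $P_i$. A local rerouting around any other common vertices (detouring through an adjacent cell of $P_i$) allows me to assume $\gamma_1$ and $\gamma_2$ meet exactly at $a$ and $b$, so that $\gamma:=\gamma_1\ast\gamma_2^{-1}$ is a Jordan curve in $P_1\cup P_2$. By the Jordan curve theorem $\gamma$ bounds a nonempty bounded open region $R\subset\RR^2$, and since $\gamma$ follows only integer edges, $R$ is a union of open unit cells.

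Next I locate a cell $E\subset R$ with $E\notin\Pc$. It cannot happen that every cell in $R$ lies in $\Pc_1$: otherwise $\overline{R}\subset P_1$, yet the arc $\gamma_2\subset\partial R$ would lie in $P_1\cap P_2$, contradicting the finiteness of this intersection; the symmetric case is excluded the same way. If $R$ contained cells from both $\Pc_1$ and $\Pc_2$, the connectedness of $R$ would force an edge-adjacent pair of cells, one from each, contradicting that they lie in distinct edge-connected components. Hence some cell $E\subset R$ is not in $\Pc$.

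To conclude, fix a proper interval $[\alpha,\beta]$ with $V(\Pc)\subset\int([\alpha,\beta])$; since $\gamma\subset\int([\alpha,\beta])$ we have $R\subset\int([\alpha,\beta])$. Any sequence $E=E_1,\ldots,E_m=F$ of cells outside $\Pc$ connecting $E$ to a border cell $F$ of $[\alpha,\beta]$ must cross $\gamma$ at some step, but each edge of $\gamma$ is an edge of a cell in $\Pc_1\cup\Pc_2$, so one of the two consecutive cells $E_i,E_{i+1}$ flanking this edge must lie in $\Pc$ — a contradiction. Thus $E$ is a hole in $\Pc$, contradicting simplicity, and the assumption $|V(\Pc_1)\cap V(\Pc_2)|\geq 2$ is untenable. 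The main obstacle is the local rerouting required to turn $\gamma_1,\gamma_2$ into a Jordan curve; once that is set up, the Jordan-curve and hole-detection steps are routine.
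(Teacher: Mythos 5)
Your proof is correct in outline but takes a genuinely different route from the paper's. The paper works directly with a single common point $p$ of $\Pc_1$ and $\Pc_2$: it notes that the two cells diagonal to $p$ lying between the $\Pc_1$-cell and the $\Pc_2$-cell cannot be in $\Pc$, connects them by simplicity to the boundary of a large box by paths of cells outside $\Pc$, and argues that deleting this cut disconnects the box so that $\Pc_1$ and $\Pc_2$ land in different pieces which meet only at $p$. Yours is the dual argument: assuming two distinct common vertices, you build a Jordan curve out of arcs in $P_1$ and $P_2$, locate a cell $E\notin\Pc$ in the enclosed region $R$, and show $E$ cannot reach the boundary of the box by cells outside $\Pc$ without crossing the curve, violating simplicity. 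The paper cuts between the components; you fence in a hole. Both hinge on planar separation, but your version invokes simplicity in a single clean step at the end.

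Two steps need repair before the argument is complete. First, the ``local rerouting'' you flag as the main obstacle is indeed not clearly available: at a common vertex $v$ the cell configuration of $\Pc_1$ need not offer a bypass, and even when it does the detour can introduce new intersections. A cleaner device is truncation rather than rerouting: the finite set $\gamma_1\cap P_2$ is linearly ordered along the simple arc $\gamma_1$; choose two consecutive members $a',b'$ and let $\gamma_1'$ be the subarc of $\gamma_1$ between them, whose interior then misses $P_2$ entirely. Any simple path $\gamma_2'$ in $P_2$ from $a'$ to $b'$ therefore meets $\gamma_1'$ only at $a',b'$, so $\gamma_1'$ followed by the reverse of $\gamma_2'$ is already a Jordan curve, and $a',b'\in V(\Pc_1)\cap V(\Pc_2)$ serve in place of $a,b$. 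Second, the case analysis used to produce $E\notin\Pc$ overlooks the possibility that $R$ contains cells from some third component $\Pc_j$ with $j\neq 1,2$. This is easily patched: the cells contained in $R$ are edge-connected (two interior points of cells in the open disk $R$ can be joined by a path in $R$ avoiding lattice vertices, and such a path crosses only relative interiors of edges, each shared by two cells of $R$), so if every cell of $R$ lay in $\Pc$ they would all lie in a single component $\Pc_j$, giving $\overline R\subset P_j$; then one of the arcs $\gamma_1\subset P_1$, $\gamma_2\subset P_2$ would lie in $P_i\cap P_j$ for some $i\neq j$, but that intersection is a finite set of vertices while the arc has positive length, a contradiction. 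With these two repairs the argument is sound.
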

\begin{proof}
Let $[a,b] \subset \NN^2$ such that $V(\Pc) \subset [a,b]$. We may assume that  $\Pc_1$ and $\Pc_2$ meet at least at one point, say $p$. Then there exist $C \in \Pc_1$ and $D \in \Pc_2$ such that $C \cap D = \{p\}$, and two distinct uniquely determined cells $E$ and $F$ in $[a,b]$ not belonging to  $\Pc$ such that $p$ is a vertex of $E$ and $F$. Since $\Pc$ is simple, each of the cells $E$ and $F$ are connected to a border cell of $[a,b]$ by the paths $\Ec\: E_1, \ldots, E_r$ and $\Fc\: F_1, \ldots, F_s$, respectively, where each $E_i$ and $F_j$ do not belong to $\Pc$.

Let $\Rc$ be the collection of cells of $[a,b]$ and $\Qc=\Rc \setminus  \Ec \cup \Fc $. If $\Ec \cap \Fc = \emptyset$, then $\Qc$ consists of two connected components $\Qc_1$ and $\Qc_2$ such that $V(\Qc_1) \cap V(\Qc_2)= p$. Let $C \in \Qc_1$ and $D \in \Qc_2$. Then $\Pc_1 \subset \Qc_1$ and $\Pc_2 \subset \Qc_2$, because $\Pc_1$ and $\Pc_2$ are connected components of $\Pc$. Hence $|\Pc_1 \cap \Pc_2| = 1$

If $\Ec \cap \Fc \neq \emptyset$, then let $i$ and $j$ be the smallest integer such that $E_i = F_j$. We can replace by $\Ec$ by the path $\Ec'= E_1, \ldots, E_i, F_{j+1}, \ldots, F_s$ that connects $E$ to a border cell of $[a,b]$. Then again, by letting  $\Qc= \Rc \setminus \Ec \cup \Fc$,  we obtain the desired conclusion.
\end{proof}

\noindent
Let $\Pc$ be a weakly connected collection of cells with connected components $\Pc_1, \ldots, \Pc_r$.  We assign to $\Pc$ a graph $G$  with vertex set $V(G)=[r]$ and edge set $E(G)$  as follows: $\{i,j\} \in E(G)$ is and only if $\Pc_i \cap \Pc_j \neq \emptyset$.

\begin{Lemma}
\label{tree}
Let $\Pc$ be a weakly connected, simple collection of cells. Then the graph $G$ attached to $\Pc$ is a tree.
\end{Lemma}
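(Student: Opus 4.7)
The plan is to verify the two defining properties of a tree for $G$, namely connectedness and acyclicity.

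Connectedness follows directly from the weak connectivity of $\Pc$. Given indices $i\neq j$, I would pick cells $C\in\Pc_i$ and $D\in\Pc_j$ and use weak connectivity to obtain a chain $C=C_1,\ldots,C_m=D$ with $C_\ell\cap C_{\ell+1}\neq\emptyset$. Whenever consecutive cells $C_\ell,C_{\ell+1}$ lie in different components $\Pc_s,\Pc_t$, one has $\Pc_s\cap\Pc_t\neq\emptyset$, hence $\{s,t\}\in E(G)$. The sequence of components visited by the chain is then a walk in $G$ from $i$ to $j$.

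For acyclicity I would argue by contradiction, assuming $G$ carries a minimal cycle $i_1,\ldots,i_k,i_1$. The case $k=2$ is ruled out by the previous lemma since two components share at most one vertex, so $k\geq 3$. Letting $p_\ell$ be the unique point of $\Pc_{i_\ell}\cap\Pc_{i_{\ell+1}}$, the points $p_1,\ldots,p_k$ are pairwise distinct because each determines the unordered pair of components meeting at it. From the proof of the previous lemma, at each $p_\ell$ the four cells containing $p_\ell$ split into one cell of $\Pc_{i_\ell}$, one of $\Pc_{i_{\ell+1}}$ (diagonally opposite), and two cells $E_\ell,F_\ell\notin\Pc$ (also diagonally opposite). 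Since $\Pc_{i_\ell}$ is a polyomino, its geometric realization $X_{i_\ell}=\bigcup_{C\in\Pc_{i_\ell}}C\subset\RR^2$ is path-connected, so I choose a simple arc $\gamma_\ell\subset X_{i_\ell}$ from $p_{\ell-1}$ to $p_\ell$. The distinct $X_{i_\ell}$'s meet only at the points $p_\ell$, so the concatenation $\gamma=\gamma_1\cdots\gamma_k$ is a simple closed curve in $\RR^2$. By the Jordan curve theorem $\gamma$ bounds a bounded region $R$; since $V(\Pc)\subset\int([a,b])$, the curve $\gamma$ lies strictly inside $[a,b]$, so $R\subset[a,b]$ and every border cell of $[a,b]$ lies outside $R$. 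At each $p_\ell$ the curve $\gamma$ locally separates $E_\ell$ from $F_\ell$, so after relabeling I may assume $F_\ell\subset R$ for every $\ell$.

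The contradiction comes from applying simplicity to $F_1$: it is a cell of $[a,b]\setminus\Pc$ lying inside $R$, so simplicity provides an edge-adjacent path of non-$\Pc$ cells $F_1=H_1,\ldots,H_N=D$ ending at a border cell $D$ of $[a,b]$. As $F_1\subset R$ and $D\not\subset R$, some consecutive pair $H_\ell,H_{\ell+1}$ satisfies $H_\ell\subset R$ and $H_{\ell+1}\not\subset R$, so their common edge $e$ meets $\gamma$. But $H_\ell,H_{\ell+1}\notin\Pc$, whereas $\gamma$ is contained in $\bigcup_j X_{i_j}$ except at the corner points $p_1,\ldots,p_k$; hence the intersection $e\cap\gamma$ must be one of the $p_j$, forcing $\{H_\ell,H_{\ell+1}\}=\{E_j,F_j\}$. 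This is impossible because $E_j$ and $F_j$ are diagonally opposite at $p_j$ and share no edge, contradicting the edge-adjacency of $H_\ell$ and $H_{\ell+1}$. The main obstacle I anticipate is precisely this topological bookkeeping: ensuring $\gamma$ is genuinely a simple closed curve, making the application of the Jordan curve theorem rigorous in the cellular setting, and pinning down that any cell-path of non-$\Pc$ cells crossing $\gamma$ must do so through a meeting vertex where the only available pair of non-$\Pc$ cells is diagonally placed and cannot appear consecutively in an edge-path.
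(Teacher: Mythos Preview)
Your argument is correct and takes a more explicitly topological route than the paper. The paper stays at the cell level throughout: given a chordless cycle on components $\Pc_{1},\ldots,\Pc_{r}$, it first passes to the subcollection $\Pc'=\Pc_{1}\cup\cdots\cup\Pc_{r}$, checks that $\Pc'$ inherits simplicity from $\Pc$, and then asserts without further argument that the cells of the ambient rectangle not in $\Pc'$ split into two nonempty pieces that are not connected to one another; simplicity of $\Pc'$ lets each piece reach the border and hence reach the other, a contradiction. Your Jordan--curve construction is precisely what is needed to justify that separation assertion, so your proof can be read as supplying the topology the paper takes for granted, at the price of the arc-and-region bookkeeping you mention. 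One simplification of that bookkeeping: your final worry about the crossing being forced through some $p_j$ is unnecessary, since for edge-adjacent $H_\ell,H_{\ell+1}\notin\Pc$ the union of the two open cells with the open common edge is already connected and disjoint from $\gamma\subset\bigcup_j X_{i_j}$ (the relative interior of that edge lies in the closure of no cell other than $H_\ell$ and $H_{\ell+1}$), so both interiors lie in the same Jordan region and you have the contradiction immediately. The step that genuinely requires the care you flag is rather the existence of a non-$\Pc$ cell inside $R$; your local analysis at $p_\ell$, with $\gamma$ entering through the $\Pc_{i_\ell}$-cell and leaving through the diagonally opposite $\Pc_{i_{\ell+1}}$-cell, does force $E_\ell$ and $F_\ell$ onto opposite sides once the neighbourhood is taken small enough that only the four cells at $p_\ell$ meet it.
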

\begin{proof}
Let $\Pc_1, \ldots, \Pc_m$ be the connected components of $\Pc$. Suppose that the graph $G$ attached to $\Pc$ is not a tree. Then $G$ contains a cycle $W$ with no chords. We may assume that
$E(W)= \{r,1\} \cup \{ \{i, i+1\}: i=1, \ldots, r-1   \} $. Let $\Pc'=\Pc_1 \cup \ldots \cup \Pc_r$, and $[a,b]\subset \NN^2$
be an interval containing $V(\Pc)$. First we show that $\Pc'$ is also simple.
Let $C \in \Rc \setminus \Pc'$. If $C \notin \Pc$ then $C$ can be connected to a border cell of $[a,b]$ by a path of cells
outside $\Pc$ which are also outside $\Pc'$.  Now suppose that $C \in \Pc$. Then $C \in \Pc_j$ for some
$j \not\in \{1,\ldots,r\}$. Let $D \in \Rc \setminus \Pc$ such that $D$ has a
common edge with a cell, say $A$, of $\Pc_j$. Since $\Pc_j$ is connected, $C$ can be connected to $A$ by a path $\mathcal{E}$ of cells in $\Pc_j$.
Let $\mathcal{F}$ be a path of cells outside $\Pc$ which connects $D$ to a border cell of $[a,b]$. Such  a path exists because $\Pc$ is simple. By adjoining $\mathcal{E}$, $D$ and $\mathcal{F}$,
we obtain a path of cells outside $\Pc'$ that connects $C$ to a border cell of $[a,b]$. Thus any cell in $[a,b]$ which does not belong to $\Pc'$
can be connected to a border cell of $[a,b]$. This shows that  $\Pc'$ is simple.

Let $\Rc$ be the  collection of cells of $[a,b]$. Then $\Rc \setminus \Pc' = \Qc_1 \cup \Qc_2$ such that $\Qc_1$ and $\Qc_2$ are not connected. Choosing $[a,b]$ large enough  we have that $\Qc_1$ and $\Qc_2$  are non-empty. Let $A \in \Qc_1$ and $B \in \Qc_2$. Since $\Pc'$ is simple, the cells  $A$ and $B$ are connected to  border cells of $[a,b]$ by  paths whose cells do not belong to $\Pc'$. Choosing $[a,b]$ even bigger if needed, these two border cells can be connected by a path whose cells also do not belong to $\Pc'$.  It follows that $\Qc_1$ and $\Qc_2$ are connected, a contradiction.
\end{proof}

\section{Convex collections of cells  and inner minors}
\label{convexcollection}

Let $\Pc \subset \NN^2$ be a collection of cells.  We attach to $\Pc$  an ideal $I_{\Pc}$ as follows. Let $K$ be a field and $S$ the  polynomial ring over $K$ in the variables $x_{a}$ with $a \in V(\Pc)$. To each proper interval $[a,b]$ of $\NN^2$, we assign the binomial $f_{a,b}=x_{b} x_{a} - x_{c} x_{d}$, where $c$ and $d$ are the anti-diagonals corners of $[a,b]$. A proper interval $[a,b]$ is called an {\em inner interval} of $\Pc$ if all cells of $[a,b]$ belong to $\Pc$. The binomial $f_{a,b}$ is called an {\em inner 2-minor} of $\Pc$, if $[a,b]$ is an inner interval of $\Pc$. Then  $I_{\Pc} \subset S$ be the ideal generated by inner 2-minors of $\Pc$. We denote by $K[\Pc]$ the quotient ring $S/I_{\Pc}$.

We will compare $I_{\Pc}$ with a toric ideal which is naturally given by $\Pc$. Let  $[a,b] \subset \NN^2$ be the smallest interval which contains $V(\Pc)$. After a shift of coordinates, we may assume that $a=(1,1)$ and $b=(m,n)$. To $\Pc$ we attach the toric ring $R=K[s_i t_j| \; (i,j)\in V(\Pc)] \subset K[s_1, \ldots, s_m, t_1, \ldots, t_n]$. We associate a bipartite graph $G$ with vertex set $\{s_1, \ldots , s_m\} \cup \{t_1, \ldots, t_n\}$ to $\Pc$ such that each vertex $(i,j) \in V(\Pc)$ determines the  edge $\{s_i, t_j\}$ in $G$. The toric ring $R$ can then be viewed as the edge ring of $G$. For the sake of convenience, in this section we denote for $a=(i,j) \in V(\Pc)$ the variable  $x_a$ in $S$ by $x_{ij}$.

A cycle $w$ of $G$ is a subset $\{s_{i_1}, t_{j_1}, s_{i_2}, t_{j_2}, \ldots , s_{i_{r-1}}, t_{j_{r-1}}, s_{i_r}, t_{j_r}\}$ of the vertex set of $G$ such that for $k= 1, \ldots, r$, each $\{s_{i_k}, t_{j_k}\}$ and $\{t_{j_k}, s_{j_{k+1}}\}$ is an edge of $G$, where $i_{r+1} = i_1$. To each such cycle $w$ we associate the binomial $f_w=x_{i_1j_1} x_{i_2j_2}\ldots x_{i_{r-1}j_{r-1}}x_{i_rj_r} - x_{i_2j_1}x_{i_3j_2}\ldots x_{i_rj_{r-1}}x_{i_1j_r}$. Observe that a binomial $f$ is attached to a cycle of length $4$ if and only if $f=f_{a,b}$ where $[a,b]$ is a proper interval of $V(\Pc)$.

Let $\phi: S \rightarrow R$ be the $K$-algebra homomorphism defined by $\phi(x_{ij})=s_i t_j$, for all $(i,j) \in V(\Pc)$ and set $J_{\Pc}=\Ker \phi$. It is known, see  \cite[Lemma 1.1]{OH} and \cite[Proposition 8.1.2]{V}, that the kernel $J_{\Pc}$ of $\varphi$ is generated by the binomial $f_w$, where $w$ is an (even) cycle of $G$. Since each generator of $I_{\Pc}$ corresponds to a cycle of length 4, we have $I_{\Pc} \subset J_{\Pc}$.

\begin{Theorem}
\label{graphprime}
Let $\Pc$ be a collection of cells.  Then the following holds:
\begin{enumerate}
\item[{\em (a)}] If $I_{\Pc} = J_{\Pc}$, then $\Pc$ is convex.
\item[{\em (b)}] If $\Pc$ is convex and weakly connected, then $I_{\Pc} = J_{\Pc}$.
\end{enumerate}
\end{Theorem}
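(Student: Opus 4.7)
My plan is to prove (a) by contrapositive. Assume $\Pc$ is not convex; without loss of generality $\Pc$ fails to be row-convex, so there exist cells $A, B \in \Pc$ with lower left corners $(i_1, j)$ and $(i_2, j)$, $i_2 \geq i_1 + 2$, such that the horizontal cell interval $[A, B]$ is not contained in $\Pc$. Consider then the proper interval with diagonal corners $a = (i_1, j)$ and $b = (i_2 + 1, j + 1)$: all four of its corners lie in $V(\Pc)$ as vertices of $A$ or $B$, so the binomial $f_{a, b}$ is attached to a $4$-cycle of $G$ and hence lies in $J_\Pc$. To show $f_{a,b} \notin I_\Pc$ I would use the natural $\ZZ^m \oplus \ZZ^n$-grading on $S$ given by $\deg(x_{(i,k)}) = (e_i, e'_k)$, where $\{e_i\}$ and $\{e'_k\}$ are the standard bases. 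Every inner $2$-minor $f_{a',b'}$ is homogeneous for this grading, and in any homogeneous decomposition $f_{a,b} = \sum_\ell g_\ell f_{a^{(\ell)}, b^{(\ell)}}$ the support of the degrees forces $\deg(g_\ell) = 0$ and $\{a^{(\ell)}, b^{(\ell)}\} = \{a, b\}$ for each $\ell$. This would demand that $[a, b]$ itself be inner, contradicting the choice of $A$ and $B$.

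For (b) I would invoke the classical result cited in the text: $J_\Pc$ is generated by the binomials $f_w$ attached to the even cycles $w$ of $G$, so it suffices to show each such $f_w$ lies in $I_\Pc$. I proceed by induction on the cycle length $2r$. When $r = 2$, $w$ corresponds to a proper interval $[a, b]$ whose four corners lie in $V(\Pc)$; Lemma~\ref{corners} (which uses convexity together with weak connectedness) then yields that $[a, b]$ is inner, so $f_w$ is an inner $2$-minor. For the inductive step $r \geq 3$, I would seek a vertex $(i_p, j_q) \in V(\Pc)$ which is not an edge of $w$, so that it provides a chord splitting $w$ into a $4$-cycle and a $(2r-2)$-cycle. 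Once such a chord is fixed, a direct calculation expresses $f_w$ as an $S$-linear combination of the two shorter binomials $f_{w_1}$ and $f_{w_2}$, which lie in $I_\Pc$ by the induction hypothesis.

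The principal obstacle is the chord-finding step in (b). The key point is to exploit Lemma~\ref{interval}: for each row index $i_k$ appearing in $w$, the two column indices $j_{k-1}, j_k$ of the cycle-edges at $s_{i_k}$ bracket an entire vertical segment lying inside $V(\Pc)$. One must argue that some cycle-column index $j_\ell$ (for $\ell \neq k-1, k$), or more generally some index obtained by considering extremal choices among the $j$'s and $i$'s of the cycle, falls inside such a bracketed segment and thereby supplies a chord. I expect this combinatorial step to be the delicate heart of the proof, while the bi-grading argument in (a) is comparatively routine.
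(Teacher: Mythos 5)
Your part (a) is correct and is essentially the paper's argument in a more formal dress: the paper argues that the term $x_{i_1,j}x_{i_2+1,j+1}$ of $f_{a,b}$ can only arise from the inner $2$-minor $f_{a,b}$ itself, which is precisely what your $\ZZ^m\oplus\ZZ^n$-bigrading makes rigorous. Your framework for (b) also matches the paper: reduce to even cycles, handle the $4$-cycle case by Lemma~\ref{corners}, and split a longer cycle along a chord into a $4$-cycle and a $2(r-1)$-cycle.

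The genuine gap is the chord-finding step in (b), which you yourself flag as unfinished (``One must argue that some cycle-column index $j_\ell$ \ldots falls inside such a bracketed segment''). As written, you have not shown that a chord always exists; you only name the tool (Lemma~\ref{interval}) and gesture at ``extremal choices.'' The paper resolves this cleanly: rotate the cycle so that $i_1\le i_k$ for all $k$. Since $r\ge 3$ and the $s$-vertices are pairwise distinct, $i_2\ne i_r$ and both exceed $i_1$, so one of them is strictly between $i_1$ and the other. If $i_2>i_r$, then $(i_1,j_1)$ and $(i_2,j_1)$ are both in $V(\Pc)$ and in horizontal position, so Lemma~\ref{interval} gives $(i_r,j_1)\in V(\Pc)$, supplying the chord via the $4$-cycle $q=\{s_{i_1},t_{j_1},s_{i_r},t_{j_r}\}$ and the $2(r-1)$-cycle $p=\{s_{i_r},t_{j_1},s_{i_2},t_{j_2},\dots,s_{i_{r-1}},t_{j_{r-1}}\}$, with $f_w=x_{i_2j_2}\cdots x_{i_{r-1}j_{r-1}}f_q+x_{i_1j_r}f_p$. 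If $i_r>i_2$, the symmetric argument applied to $(i_1,j_r)$, $(i_r,j_r)$ gives $(i_2,j_r)\in V(\Pc)$ and an analogous decomposition. This extremal ``take $i_1$ minimal and compare $i_2$ with $i_r$'' trick is exactly the concrete argument your sketch is missing; without it the induction does not close.
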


\begin{proof}
(a) Suppose that we have the equality $I_{\Pc} = J_{\Pc}$. Let $C$ and $D$ be two cells of $\Pc$ with lower left corner $a=(i,j)$ and $b=(k,j)$ with $i<k$. Then the corners of the interval $[a,e]$ belong to $V(\Pc)$, where $e=(k+1,j+1)$. Therefore, the binomial $f_{a,e}$ belongs to $J_{\Pc}$, and hence $f_{a,e} \in I_{\Pc}$. It shows that $f_{a,e}$ is a linear combination of inner 2-minors of $\Pc$. Thus there is an inner 2-minor $f_{g,h}$ of $\Pc$ which contains the term $x_{ij}x_{k+1,j+1}$. This is possible if and only if $g=a$ and $h=e$. This implies that $[a,e]$ is an inner interval of $\Pc$. Hence $\Pc$ is row convex. Similarly one shows that $\Pc$ is column convex and hence $\Pc$ is convex.

(b) Suppose that $\Pc$ is convex and weakly connected. First observe that each cycle $v=\{s_i, t_j, s_k, t_l\}$ with $i<k$ and $j<l$ of length 4 in $G$ determines the four vertices $(i,j), (k,l), (k,j), (i,l)$ of $V(\Pc)$. It follows by Lemma~\ref{corners}, that the cells of $[(i,j),(k,l)]$ belong to $\Pc$. In other words, any binomial $f_{a,b}$ with $[a,b] \subset V(\Pc)$ is an inner 2-minor of $\Pc$. With this observation it suffices to show that for a cycle $w$ of length $2r$ with $r > 2$ of $G$, the associated binomial $f_w$ can be written as a linear combination of binomials $f_p$ and $f_q$, where $q$ and $p$ are cycles of $G$ of length 4 and $2(r-1)$ respectively.

Let $w$ be a cycle of $G$ of length $2r$ with $r \geq 3$ given by
\[
\{s_{i_1}, t_{j_1}, s_{i_2}, t_{j_2},\ldots s_{i_{r-1}}, t_{j_{r-1}}, s_{i_r}, t_{j_r}\}
\]
and let $f_w=x_{i_1j_1}x_{i_2j_2}\ldots x_{i_{r-1}j_{r-1}}x_{i_rj_r} - x_{i_2j_1}x_{i_3j_2}\ldots x_{i_rj_{r-1}}x_{i_1j_r}$ be its associated binomial in $J_{\Pc}$.  Moreover, we may assume that $i_1 \leq i_k$ for all $k$.

Assume $i_2 > i_r$. Then Lemma~\ref{interval} implies that $x_{{i_r}{j_1}}\in V(\Pc)$, because $x_{i_1j_1}$ and $x_{i_2j_1}$ belong to $V(\Pc)$ (horizontal position). Take
\[
q=\{s_{i_1}, t_{j_1}, s_{i_r}, t_{j_r}\}
\]
and
\[
p=\{s_{i_r}, t_{j_1}, s_{i_2}, t_{j_2}, \ldots, s_{i_{r-1}}, t_{j_{r-1}}\}
\]
with associated binomials $f_q=x_{i_1j_1}x_{i_rj_r}-x_{i_rj_1}x_{i_1j_r}$ and $f_p=x_{i_rj_1}x_{i_2j_2}\ldots x_{i_{r-1}j_{r-1}}- x_{i_2j_1}x_{i_3j_2}\ldots x_{i_rj_{r-1}}$, respectively. Then $f_w= x_{i_2j_2}\ldots x_{i_{r-1}j_{r-1}} f_q + x_{i_1j_r} f_p $, as required.

Now assume $i_r > i_2$. Applying again Lemma~\ref{interval} we see that $x_{i_2j_r}\in V(\Pc)$, because $x_{i_1j_r}$ and $x_{i_rj_r}$ belong to $V(\Pc)$ (horizontal position). Take
\[
q=\{s_{i_1}, t_{j_1}, s_{i_2}, t_{j_r}\}
\]
and
\[
p=\{s_{i_2}, t_{j_2}, s_{i_3}, t_{j_3}, \ldots, s_{i_r}, t_{j_r}\}
\]
with associated binomials $f_q=x_{i_1j_1}x_{i_2j_r}-x_{i_2j_1}x_{i_1j_r}$ and $f_p=x_{i_2j_2}x_{i_3j_3}\ldots x_{i_rj_r}- x_{i_3j_2}x_{i_4j_3}\ldots x_{i_rj_{r-1}}x_{i_2j_r}$ Then $f_w= x_{i_1j_1} f_p + x_{i_3j_2}x_{i_4j_3}\ldots x_{i_rj_{r-1}} f_q$, as required.
\end{proof}

In order to formulate the main result of this section we introduce the following definition. Let $[a,b]$ be an interval in $\NN^2$ with $a=(i,j)$ and $b=(k,l)$. Then the {\em size} of $[a,b]$ is defined to be the number $k+l - (i+j)$ and denoted by $\size([a,b])$.

\begin{Theorem}
\label{convexdimension}
Let $\Pc$ be a convex collection of cells. Then $K[\Pc]$ is a normal Cohen--Macaulay domain of dimension $|V(\Pc)|-|\Pc|$. In particular, if $\Pc$ is weakly connected and $[a,b] \subset \NN^2$ is the smallest interval with the property that $V(\Pc) \subset [a,b]$. Then $K[\Pc]$ is a Cohen--Macaulay domain with $\dim K[\Pc]= \size([a,b]) + 1$.
\end{Theorem}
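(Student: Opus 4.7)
The plan is to reduce to the weakly connected case and then identify $K[\Pc]$ with a bipartite edge ring via Theorem~\ref{graphprime}(b), where normality, the Cohen--Macaulay property and the dimension are classical. Write $\Pc=\Pc_1\cup\cdots\cup\Pc_r$ as the disjoint union of its weakly connected components; each $\Pc_i$ is still convex, and distinct components share no vertex. Hence the $V(\Pc_i)$ are pairwise disjoint, and $K[\Pc]\iso K[\Pc_1]\tensor_K\cdots\tensor_K K[\Pc_r]$. This tensor product is itself the toric ring attached to the disjoint union of the associated bipartite graphs, so it is a normal Cohen--Macaulay domain as soon as each factor is, and dimension is additive. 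Therefore it suffices to prove the ``in particular'' assertion together with the identity $|V(\Pc)|-|\Pc|=\size([a,b])+1$ for weakly connected convex $\Pc$.

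Assume $\Pc$ is weakly connected and convex, and translate so $[a,b]=[(1,1),(m,n)]$. By Theorem~\ref{graphprime}(b), $I_\Pc=J_\Pc$, so $K[\Pc]\iso R=K[s_it_j\colon (i,j)\in V(\Pc)]$, the edge ring of the bipartite graph $G$ on $\{s_1,\ldots,s_m\}\cup\{t_1,\ldots,t_n\}$ associated to $\Pc$. Edge rings of bipartite graphs are known to be normal, hence Cohen--Macaulay by Hochster's theorem, and as toric rings they are domains. To pin down $\dim R$ I would check that $G$ is connected and uses every vertex $s_i$ and $t_j$: connectivity of $G$ follows from the weak connectivity of $\Pc$, since cells sharing a vertex correspond to $4$-cycles of $G$ sharing an edge; and if no element of $V(\Pc)$ had first coordinate $i$ for some $1\le i\le m$, then $\Pc$ would split into the cells with corners in rows $<i$ and those with corners in rows $>i$, two non-empty groups sharing no vertex, contradicting weak connectivity (the argument for the $t_j$ is symmetric). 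Since the edge ring of a connected bipartite graph on $N$ vertices has Krull dimension $N-1$, we conclude $\dim K[\Pc]=m+n-1=\size([a,b])+1$.

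Finally I would establish the identity $|V(\Pc)|-|\Pc|=\size([a,b])+1$ for weakly connected convex $\Pc$ by induction on $|\Pc|$. The base case is a single cell, where $4-1=3=2+1$. For the inductive step I would remove a cell in an extremal position --- for instance, a cell in the uppermost row of cells, chosen at a horizontal end of that row --- and verify by case analysis (on whether the removed cell shares its corners with neighbouring cells) that $\Pc$ remains weakly connected and convex, and that $|V(\Pc)|$, $|\Pc|$ and $\size([a,b])$ change consistently. This combinatorial bookkeeping is the main obstacle in the argument; the ring-theoretic part is essentially automatic from Theorem~\ref{graphprime}(b) and the theory of bipartite edge rings.
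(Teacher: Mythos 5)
Your proposal follows the paper's proof almost step for step: reduce to the weakly connected case via the decomposition into weakly connected components, invoke Theorem~\ref{graphprime}(b) to get $I_\Pc=J_\Pc$ and identify $K[\Pc]$ with the edge ring of a connected bipartite graph, then read off domain, normality, Cohen--Macaulayness, and the dimension from standard facts, and finally prove the combinatorial identity $|V(\Pc)|-|\Pc|=\size([a,b])+1$ by induction. Two minor deviations: the paper derives normality via the squarefree universal Gr\"obner basis given by the even-cycle binomials (then Sturmfels and Hochster), while you cite normality of bipartite edge rings directly --- both are valid; and your explicit check that every $s_i$ and $t_j$ actually occurs in $G$ (so that $|V(G)|=m+n$) is a careful detail the paper leaves implicit, and it is a good addition. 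The one unfinished step is precisely the one you flag yourself: for the counting identity the paper inducts on the number of \emph{columns}, deleting the entire rightmost column $S$ and writing $|V(\Pc)|=|V(\Qc)|+2r+t+1$, $|\Pc|=|\Qc|+r+t$, $\size([a,b])=\size([a',b'])+r+1$, where $t$ is the number of cells of $S$ sharing an edge with $\Qc$ and $r$ the number of remaining cells of $S$. Removing a whole extremal column rather than a single cell, as you propose, avoids having to argue that deleting one cell preserves both convexity and weak connectivity, and makes the vertex/cell/size bookkeeping considerably more transparent; you should adopt that variant to close the gap you identified.
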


\begin{proof}
Let $\Pc_1, \ldots, \Pc_r$ be the weakly connected component of $\Pc$. Then $V(\Pc)$ is the  disjoint union of the $V(\Pc_j)$, $j=i,\ldots,r$, and $I_{\Pc}= \sum_{j=1}^{r} I_{\Pc_j}$. It follows that $K[\Pc]$ is a normal Cohen--Macaulay domain if and only if each $K[\Pc_j]$ is a normal Cohen--Macaulay domain. Hence we may assume that the $\Pc$ is weakly connected. It follows from Theorem~\ref{graphprime} that $I_{\Pc} = J_{\Pc}$. This implies that $K[\Pc]$ is a domain. We know from \cite[Lemma 1.1]{OH} and \cite[Proposition 8.1.2]{V} that binomials corresponding to the even cycles of the graph $G$ attached to $\Pc$ form the universal Gr\"obner basis of $J_{\Pc}$. This implies that the initial ideal of $I_{\Pc}$ with respect to any monomial order is squarefree. By theorem of Sturmfels \cite{St}, one obtains that $K[\Pc]$ is normal and by a theorem of Hochster \cite[Theorem 6.3.5]{BH} (see also \cite{BH}), we obtain that $K[\Pc]$ is Cohen--Macaulay.

For the computation of the dimension of $K[\Pc]$, we may again assume that $\Pc$ is weakly connected because $|V(\Pc)| - |P| = \sum_{j=1}^{r}(|V(\Pc_j)|-|P_j|)$. Since $K[\Pc]$ is isomorphic to $K[G]$, the edge ring of the bipartite graph $G$, we may apply the  \cite[Corollary 8.2.13]{V} of Villarreal, which says that if $G$ is a connected bipartite graph then $\dim K[G] = |V(G)| -1$. For simplicity we may assume that the smallest interval with the property that $V(\Pc) \subset [a,b]$ is given by $a=(1,1)$ and $b=(m,n)$. Then $[a,b] = [m] \times [n]$. It follows from the identification of $K[G]$ with $K[\Pc]$ that $V(G)= \{s_1, \ldots, s_m\} \cup \{t_1, \ldots, t_n\}$. Therefore, $\dim K[\Pc]= \size([a,b]) + 1$.

It remains to show that $|V(\Pc)| - |P|= \size([a,b]) + 1$. We prove this by induction on the number of columns of $\Pc$. If $\Pc$ consists of only one column then the assertion is trivial. Now assume that number of columns of $\Pc$ is bigger than one, and let $\mathcal{Q}$ be the collection of cells which is obtained from $\Pc$ by removing the right most column $S$ of $\Pc$. Let $[a', b']$ be the smallest interval containing $V(\mathcal{Q})$, and $t$ be the number of cells in $S$ which have a common edge with a cell in $\mathcal{Q}$ and let $r$ be the number of the remaining cells in $S$. Then $|V(\Pc)|= |V(\mathcal{Q})| + 2r +t+1$ and $|\Pc| = |\mathcal{Q}| + r +t$, and $\size([a,b])= \size ([a', b']) + r+1$. Hence we obtain the desired formula.
\end{proof}
As an immediate consequence of Theorem~\ref{convexdimension} we get
\begin{Corollary}
Let $\Pc$ be a convex collection of cells. Then $\height I_{\Pc}= |\Pc|$.
\end{Corollary}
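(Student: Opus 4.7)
The plan is to read this corollary as an immediate numerical consequence of Theorem~\ref{convexdimension}, using only the standard relation between dimension and height in a polynomial ring.

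First I would recall that $S$ is the polynomial ring over $K$ in the variables $x_a$ indexed by $a \in V(\Pc)$, so that $\dim S = |V(\Pc)|$. Theorem~\ref{convexdimension} tells me that $K[\Pc] = S/I_{\Pc}$ is a (normal, Cohen--Macaulay) domain of dimension $|V(\Pc)| - |\Pc|$. In particular $I_{\Pc}$ is a prime ideal of $S$.

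Next I would invoke the fact that $S$ is a Cohen--Macaulay (in particular catenary) ring, so for the prime ideal $I_{\Pc}$ one has
\[
\height I_{\Pc} = \dim S - \dim S/I_{\Pc}.
\]
Substituting the two values just computed gives
\[
\height I_{\Pc} = |V(\Pc)| - (|V(\Pc)| - |\Pc|) = |\Pc|,
\]
which is the desired formula.

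There is no real obstacle here; the entire content of the corollary is packaged in Theorem~\ref{convexdimension}, and what remains is only the routine height--dimension identity for a prime ideal in a polynomial ring. The only small point worth mentioning in the write-up is why one may pass from $K[\Pc]$ being a domain of known dimension to the height of $I_{\Pc}$, which is exactly the catenarity of $S$.
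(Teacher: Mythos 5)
Your argument is correct and is exactly what the paper has in mind when it states the corollary is "an immediate consequence" of Theorem~\ref{convexdimension}: since $K[\Pc]$ is a domain of dimension $|V(\Pc)|-|\Pc|$, the ideal $I_{\Pc}$ is prime, and the standard identity $\height I_{\Pc} = \dim S - \dim S/I_{\Pc}$ in the polynomial ring $S$ gives $\height I_{\Pc} = |\Pc|$.
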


\section{A natural toric ring associated with  a collection of cells}
\label{embedding}
Let $\Pc$ be a collection of cells. An element of $V(\Pc)$ is called a {\em free vertex} if it is not a lower left corner of any cell of $\Pc$ and we denote the set of free vertices of $\Pc$ by $F(\Pc)$.

Let $K$ be a field and as before $S=K[x_a \: a \in V(\Pc)]$. Consider the Laurent polynomial ring $T=K[y_c^{\pm 1} \: c \in F(\Pc)]$. We define a $K$-algebra homomorphism $\psi : S \rightarrow T$ by $x_a \mapsto u_{a}$, where $u_{a}$ is a monomial in $T$. The monomials $u_{a}$ are recursively defined as follows: For each free vertex $ a \in F(\Pc)$, we set $u_{a}=y_{a}$. Let $k=\max \{|a| : \; a=(i,j) \in V(\Pc)\}$ where $|a|=i+j$ for $a=(i,j)$. If $|a|=k$, then $a$ is a free vertex in $\Pc$ and $u_{a}$ is already defined. Suppose now that $|a| < k$ and $a$ is not a free vertex. Then $a=(i,j)$ is the lower left corner of the (unique) cell whose other  vertices are $b=(i+1,j)$, $c=(i,j+1)$ and $d=(i+1,j+1)$. In this case we set $u_{a}=u_b u_c u_d^{- 1}$. Observe that for any $a \in V(\Pc)$ and $y_b^{\pm 1} \in \supp(u_a)$, we have $a\leq b$.

The image of $\psi$ is a toric ring and we set $L_{\Pc} = \Ker \psi$. We denote by $\mathfrak{C}$ the class of collection of cells for which $\psi$ is an isomorphism.
\begin{Lemma}
\label{psi}
Let $[a,b]$ be an inner interval of $\Pc$. Then $u_{a}=u_{c} u_{d} u_{b}^{- 1}$, where $c$ and $d$ are the anti-diagonals of $[a,b]$.
\end{Lemma}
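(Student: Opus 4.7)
The plan is to prove this by induction on $\size([a,b])$, where $a=(i,j)$ and $b=(k,l)$, so $\size([a,b])=(k-i)+(l-j)$. Since $[a,b]$ is a proper interval, the size is at least $2$, and the base case $\size([a,b])=2$ corresponds exactly to $[a,b]$ being a single cell of $\Pc$ (recall an inner interval has all its cells in $\Pc$). In that case $a$ is the lower left corner of this cell, so $a$ is not a free vertex and the desired identity $u_a=u_c u_d u_b^{-1}$ holds by the defining recursion for $\psi$.

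For the inductive step, assume $\size([a,b])\geq 3$, so at least one of $k-i$ or $l-j$ is $\geq 2$. Suppose, without loss of generality, that $k-i\geq 2$ (the other case is symmetric). I would split $[a,b]$ into the two proper subintervals
\[
[a,b']:=[(i,j),(i+1,l)] \quad\text{and}\quad [a',b]:=[(i+1,j),(k,l)],
\]
whose anti-diagonal corners are $\{(i+1,j),\,d=(i,l)\}$ and $\{c=(k,j),\,(i+1,l)\}$ respectively. Since every cell of each subinterval is a cell of $[a,b]$ and $[a,b]$ is inner in $\Pc$, both $[a,b']$ and $[a',b]$ are themselves inner intervals of $\Pc$. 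Their sizes are $1+(l-j)$ and $(k-i-1)+(l-j)$, both strictly less than $\size([a,b])$ because $k-i\geq 2$.

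By the induction hypothesis applied to each of these subintervals I obtain
\[
u_{(i,j)}=u_{(i+1,j)}\,u_{d}\,u_{(i+1,l)}^{-1} \quad\text{and}\quad u_{(i+1,j)}=u_{c}\,u_{(i+1,l)}\,u_{b}^{-1}.
\]
Substituting the second equation into the first gives
\[
u_a=\bigl(u_c\,u_{(i+1,l)}\,u_b^{-1}\bigr)\,u_d\,u_{(i+1,l)}^{-1}=u_c\,u_d\,u_b^{-1},
\]
which is the claim. The case $l-j\geq 2$ is handled identically by splitting horizontally along the row $j+1$ instead.

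I do not anticipate a serious obstacle here: the recursion defining $u_a$ is telescopic in exactly the direction we need, and the only point to watch is that both pieces of the split remain inner intervals (which is automatic, since inner-ness is inherited by any subinterval). The essential content is simply that the cell-by-cell relations $u_a=u_b u_c u_d^{-1}$ compose consistently along a $1\times(l-j)$ strip and then across $(k-i)$ such strips to yield the same relation for the whole rectangle.
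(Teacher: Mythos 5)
Your proof is correct and takes essentially the same approach as the paper: induction on the size of the interval, splitting it into two smaller inner subintervals (you peel off the leftmost column, the paper peels off the rightmost), applying the induction hypothesis to each piece, and substituting to cancel the intermediate corner.
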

\begin{proof}
We apply induction on the size of the inner interval $[a,b]$ of $V(\Pc)$. The smallest possible size of an inner interval is 2, in which case $[a,b]$ is a cell. Then the assertion follows from the definition of the monomials $u_a$. Let $\size([a,b])>2$, and $a=(i,j)$ and $b=(k,l)$. Then we may assume that $k>i+1$. Let $e=(k-1,j)$ and $f=(k-1,l)$, then $[a,f]$ and $[e,b]$ are two inner interval of $V(\Pc)$ of smaller size than the interval $[a,b]$. Therefore by induction hypothesis we have $u_{a}=u_{c} u_{e} u_{f}^{- 1}$ and $u_{e}=u_{d} u_{f} u_{b}^{- 1}$. Substituting the second formula into the first one we get desired result.
\end{proof}
\begin{Theorem}
\label{contained}
With the notation introduced we have, $I_{\Pc} \subset L_{\Pc} \subset J_{\Pc}$. Moreover, the following cases are possible.
\begin{enumerate}
\item[{\em (1)}] $I_{\Pc} = L_{\Pc} \subsetneq J_{\Pc}$
\item[{\em (2)}] $I_{\Pc} \subsetneq L_{\Pc} \subsetneq J_{\Pc}$
\item[{\em (3)}] $L_{\Pc} = J_{\Pc}$.
\end{enumerate}
If $L_{\Pc} = J_{\Pc}$, then $\Pc$ is convex. In addition, if $\Pc$ is weakly connected then $I_{\Pc}= J_{\Pc}$.
\end{Theorem}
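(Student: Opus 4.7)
The plan is four-step: prove the chain $I_{\Pc}\subset L_{\Pc}\subset J_{\Pc}$, exhibit examples realizing cases (1), (2), (3), establish the implication $L_{\Pc}=J_{\Pc}\Rightarrow\Pc$ convex, and deduce the ``in addition'' clause from that implication and Theorem~\ref{graphprime}(b).

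The containment $I_{\Pc}\subset L_{\Pc}$ is immediate from Lemma~\ref{psi}, since for an inner interval $[a,b]$ with anti-diagonal corners $c,d$ the lemma gives $u_au_b=u_cu_d$, i.e.\ $\psi(f_{a,b})=0$. For $L_{\Pc}\subset J_{\Pc}$ I would factor $\varphi$ through $\psi$. Introduce the $K$-algebra homomorphism $\iota:T\to K[s_1^{\pm 1},\ldots,s_m^{\pm 1},t_1^{\pm 1},\ldots,t_n^{\pm 1}]$ sending $y_{(i,j)}\mapsto s_it_j$ for $(i,j)\in F(\Pc)$, and prove by induction on $k-|a|$, where $k=\max\{|a|:a\in V(\Pc)\}$, that $\iota(u_a)=s_it_j$ for all $a=(i,j)\in V(\Pc)$: vertices with $|a|=k$ are necessarily free and give the base case, while the inductive step uses the recursion $u_a=u_bu_cu_d^{-1}$ together with the identity $(s_{i+1}t_j)(s_it_{j+1})(s_{i+1}t_{j+1})^{-1}=s_it_j$. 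Since $\iota\circ\psi=\varphi$, we obtain $\ker\psi\subset\ker\varphi=J_{\Pc}$.

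The main implication $L_{\Pc}=J_{\Pc}\Rightarrow\Pc$ convex is proved by contrapositive. Assume $\Pc$ is not row convex (the column case is symmetric). Picking the leftmost missing cell of a non-row-convex row, one obtains cells of $\Pc$ whose lower left corners $(i,j)$ and $(k,j)$ satisfy $k>i+1$ and such that the cell with lower left corner $(i+1,j)$ is absent from $\Pc$. Put $a=(i,j)$, $e=(k+1,j+1)$, $b=(k+1,j)$, $c=(i,j+1)$; all four lie in $V(\Pc)$, so $f_{a,e}\in J_{\Pc}$. Since $(i+1,j)\in F(\Pc)$, the recursion $u_a=u_{(i+1,j)}u_{(i,j+1)}u_{(i+1,j+1)}^{-1}$ combined with $u_{(i+1,j)}=y_{(i+1,j)}$ forces $y_{(i+1,j)}$ to occur with exponent $+1$ in $u_a$. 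The paper's support observation says each $u_{a'}$ is supported on variables $y_{b'}$ with $a'\le b'$; applying this to $a'\in\{e,b,c\}$ against $b'=(i+1,j)$ shows that $y_{(i+1,j)}$ cannot appear in $u_e$, $u_b$, or $u_c$. Therefore $\psi(f_{a,e})=u_au_e-u_bu_c$ is a difference of two distinct monomials, so $f_{a,e}\in J_{\Pc}\setminus L_{\Pc}$ and $L_{\Pc}\subsetneq J_{\Pc}$. The ``in addition'' clause is then immediate: if $L_{\Pc}=J_{\Pc}$ and $\Pc$ is weakly connected, then $\Pc$ is convex by the above and Theorem~\ref{graphprime}(b) yields $I_{\Pc}=J_{\Pc}$.

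For the three cases, a single cell trivially realizes (3) with $I_{\Pc}=L_{\Pc}=J_{\Pc}$ principal. For case (1), take the two cells with lower left corners $(1,1)$ and $(3,1)$: parametrizing the relation lattice of $\psi$ by the two exponent differences at the non-free vertices identifies $L_{\Pc}$ with the ideal generated by the two cell $2$-minors (so $L_{\Pc}=I_{\Pc}$), whereas $x_{(1,1)}x_{(3,2)}-x_{(3,1)}x_{(1,2)}\in J_{\Pc}$ has nonzero $\psi$-image, as detected by the exponent of $y_{(2,1)}$. Case (2) is the step I expect to be the main obstacle: one must find $\Pc$ for which the lattice ideal of the relation lattice of $\psi$ has an ideal-theoretic Markov basis strictly larger than the set of inner $2$-minors of $\Pc$. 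Such $\Pc$ can be constructed by combining non-convex gaps with enough additional cells that the $\psi$-relation lattice admits a binomial whose positive and negative supports cannot be bridged by any single inner $2$-minor of $\Pc$; verification requires a direct parametrization of $\ker\psi$ in the chosen example.
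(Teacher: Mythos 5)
Your proof of the containment chain $I_{\Pc}\subset L_{\Pc}\subset J_{\Pc}$ matches the paper's: the first inclusion is Lemma~\ref{psi}, and the second is exactly the factorization $\varphi=\alpha\circ\psi$ (your $\iota$ is the paper's $\alpha$), proved by induction on $k-|a|$.

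Your argument for ``$L_{\Pc}=J_{\Pc}$ implies $\Pc$ convex'' is correct and takes a genuinely different, and in my view cleaner, route. You prove the contrapositive directly: locate a cell gap, build the length-4 cycle binomial $f_{a,e}\in J_{\Pc}$ spanning the gap, and show $\psi(f_{a,e})\neq 0$ by tracking the exponent of the free-vertex variable $y_{(i+1,j)}$ using the observation $\supp(u_{a'})\subset\{y_{b'}:a'\leq b'\}$. This is self-contained. The paper instead invokes Theorem~\ref{degree} (stated and proved only later, using Lemma~\ref{inner}) to conclude $(I_{\Pc})_2=(L_{\Pc})_2$, from which $(I_{\Pc})_2=(J_{\Pc})_2$, and then re-runs the argument of Theorem~\ref{graphprime}(a). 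The two approaches give the same conclusion; yours avoids the forward reference and the detour through Lemma~\ref{inner}. The ``in addition'' clause is handled identically in both.

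The genuine gap is case (2). The statement asserts all three configurations are \emph{possible}, so a concrete witness for $I_{\Pc}\subsetneq L_{\Pc}\subsetneq J_{\Pc}$ is required, and you explicitly leave it open, saying only that such a $\Pc$ ``can be constructed.'' That is not a proof of existence. The paper supplies the example (Figure~\ref{chessboard}): four cells arranged in a plus/checkerboard pattern meeting pairwise only at the shared vertex. There, with the labelling in the figure, $x_ax_fx_gx_l-x_bx_cx_jx_k\in L_{\Pc}\setminus I_{\Pc}$ (a degree-4 binomial in the kernel of $\psi$ that cannot be generated by inner 2-minors since $\Pc$ has no inner intervals of size $>2$ bridging opposite cells), while $x_ex_h-x_dx_i\in J_{\Pc}\setminus L_{\Pc}$ (a length-4 cycle binomial across the central vertex whose image under $\psi$ is nonzero, by an argument like the one you use for the convexity implication). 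Your examples for cases (1) and (3) are fine; the two disjoint cells for (1) can be verified by noting that the $\psi$-kernel lattice splits over the two disjoint supports into two rank-one pieces, each generated by the corresponding inner 2-minor, while $x_{(1,1)}x_{(3,2)}-x_{(3,1)}x_{(1,2)}\in J_{\Pc}$ has nonzero $\psi$-image as you indicate. But without a worked example for (2), the proof is incomplete.
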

\begin{proof}
Let $[a,b]$ be an inner interval of $\Pc$ with anti-diagonal corners $c$ and $d$, and $f_{a,b}=x_{b} x_{a} - x_{c} x_{d}$ be the corresponding generator in $I_{\Pc}$. By Lemma~\ref{psi}, we have $u_{a} u_{b} = u_{c} u_{d} $. From this it follows that $f_{a,b} \in L_{\Pc}$. Hence $I_{\Pc} \subset L_{\Pc}$.

In order to show that  $L_{\Pc} \subset J_{\Pc}$, we let $W=K[\{s_i^{\pm 1}, t_j^{\pm i} \} : (i,j) \in V(\Pc)]$ and define the $K$-algebra homomorphism $\alpha : T \rightarrow W$ by $\alpha (y_c^{\pm 1})= (s_i t_j)^{\pm 1}$ where $c=(i,j)$. Let $\phi : S \to R$ be the $K$-algebra homomorphism as defined in  before Theorem~\ref{graphprime}. For simplicity we again denote by $\phi$ the composition of $\phi$ with the natural inclusion of $R$ into $W$. We claim that $\phi = \alpha \circ \psi$. This claim will imply that $L_{\Pc} \subset J_{\Pc}$.

In order to prove the claim, let $a=(i,j) \in V(\Pc)$. If $a$ is a free vertex of $V(\Pc)$, then
\[
\alpha \circ \psi (x_{a}) = \alpha (y_a) = s_i t_j = \phi(x_{a}).
\]
Let $k=\max \{|a| : \; a=(i,j) \in V(\Pc)\}$. If $|a|=k$, then $a$ is a free vertex, and the assertion is true as we have just seen. Suppose now that $|a| < k$ and $a$ is not a free vertex. Then $a$ is the lower left corner of the (unique) cell with  vertices $b=(i+1,j)$, $c=(i,j+1)$ and $d=(i+1,j+1)$, and $u_{a}=u_b u_c u_d^{- 1}$. We may assume that for any $e=(p,q)$ with $|e| > |a|$ we have $\alpha (u_e)=\alpha \circ \psi (x_{e}) =\phi(x_{e})= s_p t_q$. Then
\begin{eqnarray*}
\alpha \circ \psi (x_{a}) &=& \alpha (u_a) = \alpha (u_b u_c u_d^{- 1})= \alpha (u_b) \alpha ( u_c) \alpha ( u_d^{- 1}) \\
&=& s_{i+1} t_{j} s_{i} t_{j+1} s_{i+1}^{-1} t_{j+1}^{-1} = s_i t_j =  \phi(x_{a}).
\end{eqnarray*}

Case (1) happens for example when we let $\Pc$ be the collection of cells given in Figure~\ref{IP=LP}.

\begin{figure}[hbt]
\begin{center}
\psset{unit=0.7cm}
\begin{pspicture}(4.5,0)(4.5,2)

\pspolygon[style=fyp,fillcolor=light](2.8,0)(2.8,1)(3.8,1)(3.8,0)
\pspolygon[style=fyp,fillcolor=light](3.8,0)(3.8,1)(4.8,1)(4.8,0)
\pspolygon[style=fyp,fillcolor=light](4.8,0)(4.8,1)(5.8,1)(5.8,0)
\pspolygon[style=fyp,fillcolor=light](2.8,1)(2.8,2)(3.8,2)(3.8,1)
\pspolygon[style=fyp,fillcolor=light](4.8,1)(4.8,2)(5.8,2)(5.8,1)

\rput(2.5,1){$a$}
\rput(6.1,1){$c$}
\rput(2.5,2.2){$d$}
\rput(6.1,2.2){$b$}
\end{pspicture}
\end{center}
\caption{$I_{\Pc} = L_{\Pc} \subsetneq J_{\Pc}$}\label{IP=LP}
\end{figure}

The binomial $f_{a,b}$ belongs to $J_{\Pc}$ but not to $L_{\Pc}$. Also $I_{\Pc} = L_{\Pc}$ because of Corollary~\ref{prime} and Theorem~\ref{colconvex}.

Case(2) happens for example when we let $\Pc$ be the collection of cells given in Figure~\ref{chessboard}.
\begin{figure}[hbt]
\begin{center}
\psset{unit=0.7cm}
\begin{pspicture}(4.5,-1)(4.5,2)
\pspolygon[style=fyp,fillcolor=light](2.8,0)(2.8,1)(3.8,1)(3.8,0)
\pspolygon[style=fyp,fillcolor=light](3.8,-1)(3.8,0)(4.8,0)(4.8,-1)
\pspolygon[style=fyp,fillcolor=light](4.8,0)(4.8,1)(5.8,1)(5.8,0)
\pspolygon[style=fyp,fillcolor=light](3.8,1)(3.8,2)(4.8,2)(4.8,1)
\rput(3.8,2.2){$a$}
\rput(4.8,2.3){$b$}
\rput(2.6,1.2){$c$}
\rput(3.5,1.3){$d$}
\rput(5.1,1.2){$e$}
\rput(6.1,1.2){$f$}
\rput(2.6,-0.3){$g$}
\rput(3.5,-0.3){$h$}
\rput(5.1,-0.3){$i$}
\rput(6.1,-0.3){$j$}
\rput(3.8,-1.3){$k$}
\rput(4.8,-1.3){$l$}
\end{pspicture}
\end{center}
\caption{$I_{\Pc} \subsetneq L_{\Pc} \subsetneq J_{\Pc}$}\label{chessboard}
\end{figure}

The binomial $x_a x_f x_g x_l - x_b x_c x_j x_k$ belongs to $L_{\Pc}$ but not to $I_{\Pc}$, and the binomial $ x_e x_h - x_d x_i $ belongs to $J_{\Pc}$ but not to $L_{\Pc}$.

Case (3) happens for example when we let $\Pc$ be the collection of cells given in Figure~\ref{Lp=Jp}.

\begin{figure}[hbt]
\begin{center}
\psset{unit=0.7cm}
\begin{pspicture}(4.5,-1)(4.5,2)
\pspolygon[style=fyp,fillcolor=light](2.8,0)(2.8,1)(3.8,1)(3.8,0)
\pspolygon[style=fyp,fillcolor=light](3.8,0)(3.8,1)(4.8,1)(4.8,0)
\pspolygon[style=fyp,fillcolor=light](3.8,-1)(3.8,0)(4.8,0)(4.8,-1)
\pspolygon[style=fyp,fillcolor=light](4.8,0)(4.8,1)(5.8,1)(5.8,0)
\pspolygon[style=fyp,fillcolor=light](3.8,1)(3.8,2)(4.8,2)(4.8,1)
\end{pspicture}
\end{center}
\caption{$L_{\Pc} = J_{\Pc}$}\label{Lp=Jp}
\end{figure}

Now suppose that $L_{\Pc} = J_{\Pc}$. First observe that by Theorem~\ref{degree}, the  second degree component of $I_{\Pc}$ and $L_{\Pc}$ are equal. Therefore, our assumption implies that second degree component of $I_{\Pc}$ and $J_{\Pc}$ are equal as well. Hence as in the proof of Theorem~\ref{graphprime} (a) we conclude $\Pc$ is convex. Let $\Pc_1, \ldots, \Pc_r$ be the weakly connected components of $\Pc$. Then it follows from the definition of $L_{\Pc}$ that $L_{\Pc} = \sum_{i=1}^{r} L_{\Pc_i}$. If in addition $\Pc$ is weakly connected, we apply Theorem~\ref{graphprime} (b) and obtain the equality $I_{\Pc} = L_{\Pc} = J_{\Pc}$.
\end{proof}

We do not know of any example for which $I_{\Pc} \subsetneq L_{\Pc} = J_{\Pc}$.

\medskip
For the proof of the next theorem we need the following lemma.

\begin{Lemma}
\label{inner}
Let $\Pc$ be a collection of cells. If $f_{a,b} \in L_{\Pc} $, then $[a,b]$ is an inner interval of $\Pc$.
\end{Lemma}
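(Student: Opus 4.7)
The plan is to argue by contradiction, leveraging the observation (stated just after the definition of $\psi$) that for $v\in V(\Pc)$ the support of $u_v$ consists of free vertices $w$ with $w\ge v$. Writing $a=(i,j)$, $b=(k,l)$, $c=(k,j)$, $d=(i,l)$, I assume $f_{a,b}\in L_{\Pc}$, i.e.\ $u_au_b=u_cu_d$, and that some cell of $[a,b]$ is missing from $\Pc$. I will exhibit a free vertex $p\in V(\Pc)$ at which the two sides of $u_au_b=u_cu_d$ carry different exponents of $y_p$, which is the sought-for contradiction.

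Let $\Sigma\subseteq[i,k-1]\times[j,l-1]$ record the lower-left corners of the cells of $[a,b]$ that do not belong to $\Pc$. Choose $p=(r,s)\in\Sigma$ minimal in lexicographic order. First I will check that $p\in V(\Pc)$ and that $p$ is free: the former holds because either $p=a\in V(\Pc)$, or a lexicographically earlier cell (forced into $\Pc$ by the minimality of $p$) has $p$ as a vertex, while the latter holds because the unique cell for which $p$ could serve as a lower-left corner is $[(r,s),(r+1,s+1)]$, which is missing from $\Pc$ by the choice of $p$. Hence $u_p=y_p$.

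Next I will show $(u_a)_p=1$. By lexicographic minimality, every cell with column index strictly less than $r$ belongs to $\Pc$, so if $r>i$ the interval $[(i,j),(r,l)]$ is inner and Lemma~\ref{psi} yields $u_a=u_{(r,j)}u_du_{(r,l)}^{-1}$. Similarly, if $s>j$ the cells in column $r$ below row $s$ lie in $\Pc$, so $[(r,j),(r+1,s)]$ is inner and Lemma~\ref{psi} yields $u_{(r,j)}=u_{(r+1,j)}u_pu_{(r+1,s)}^{-1}$. Substituting, every factor of the resulting expression for $u_a$ other than $u_p$ is a $u_v$ with $v\not\le p$ (since $r+1>r$ and $l>s$), and the basic observation above kills its $p$-coordinate; what remains is $(u_p)_p=1$. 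The boundary cases $r=i$ and $s=j$ are handled by the same pattern, applying only the applicable one of the two formulas (or, in the degenerate case $(r,s)=(i,j)$, simply using that $a$ is then free and $u_a=y_a$).

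Finally, because $r<k$ and $s<l$, none of $b=(k,l)$, $c=(k,j)$, $d=(i,l)$ satisfies $v\le p$, so $(u_b)_p=(u_c)_p=(u_d)_p=0$. Comparing $p$-coordinates in $u_au_b=u_cu_d$ thus forces $(u_a)_p=0$, contradicting $(u_a)_p=1$; hence $\Sigma$ must be empty and $[a,b]$ is an inner interval of $\Pc$. The main obstacle will be a clean treatment of the boundary cases for the position of $p$ inside $[a,b]$; once the lexicographic minimality of $p$ is used to identify the appropriate inner sub-interval(s) of $\Pc$, Lemma~\ref{psi} does all the heavy lifting.
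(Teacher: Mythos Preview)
Your proof is correct and shares the paper's core strategy: locate a free vertex inside $[a,b]$ and use the observation that $\supp(u_v)$ consists only of $y_w$ with $w\ge v$ to contradict $u_au_b=u_cu_d$. The difference lies in how the free vertex is produced. The paper picks the upper-right corner $e$ of an inner sub-interval $[a,e]$ with $e<b$ and $e$ free, so that a single application of Lemma~\ref{psi} exhibits $y_e$ in $u_a$; you instead take $p$ to be the lower-left corner of the lex-minimal missing cell and carve out an L-shaped pair of inner intervals $[(i,j),(r,l)]$ and $[(r,j),(r+1,s)]$, applying Lemma~\ref{psi} twice to isolate the $y_p$-exponent of $u_a$. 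Your construction is more explicit and makes the boundary cases $r=i$, $s=j$ transparent, at the cost of a second invocation of Lemma~\ref{psi}; the paper's one-line existence claim for $e$ (and the assertion $c\nleq e$, $d\nleq e$) leaves a bit more to the reader.
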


\begin{proof}
Let $f_{a,b}=x_{b}x_{a} - x_{c} x_{d}$ where $c$ and $d$ are the anti-diagonal corners of $[a,b]$. The vertex $a \in V(\Pc)$ is not a free vertex, otherwise $u_a = y_a$ and $y_a^{\pm 1}\notin \supp(u_c u _d)$ and $y_a^{-1} \notin \supp(u_b)$. This  implies that $f_{a,b} \notin L_{\Pc}$, a contradiction.

Assume that $[a,b]$ is not an inner interval. Then there exists an inner interval $[a,e]$ of $V(\Pc)$ with $e < b$ such that $e$ is a free vertex in $\Pc$ and $y_e^{\pm 1} \notin \supp(u_b)$. By Lemma~\ref{psi}, we have $y_e^{\pm 1} \in \supp(u_a)$. On the other hand, since $c \nleq e$, $d \nleq e$, it follows that $y_e^{\pm 1} \notin \supp(u_c u_d)$, contradicting the fact that $u_au_b=u_cu_d$.
\end{proof}

Let $I$ be a graded ideal. The $k^{th}$ graded component of $I$ will be denoted by $I_k$.

\begin{Theorem}
\label{degree}
Let $\Pc$ be a collection of cells. Then $(I_{\Pc} )_2=(L_{\Pc} )_2$.
\end{Theorem}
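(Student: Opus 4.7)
The inclusion $(I_{\Pc})_2 \subseteq (L_{\Pc})_2$ is immediate from Theorem~\ref{contained}, so the task reduces to showing $(L_{\Pc})_2 \subseteq (I_{\Pc})_2$. Since $L_{\Pc} = \Ker \psi$ and $\psi$ maps $S$ into a Laurent polynomial ring, $L_{\Pc}$ is a prime binomial ideal. In particular, its degree~2 component is spanned as a $K$-vector space by binomials $x_p x_q - x_r x_s$ with $\psi(x_p x_q) = \psi(x_r x_s)$. It thus suffices to show that every such binomial is either zero or a generator $f_{a,b}$ (up to sign) of $I_{\Pc}$.

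To carry this out, I would invoke the composition $\phi = \alpha \circ \psi$ from the proof of Theorem~\ref{contained}, which sends $x_{(i,j)}$ to $s_i t_j$. Applying $\alpha$ to the identity $\psi(x_p x_q) = \psi(x_r x_s)$ yields
\[
s_{p_1} t_{p_2} s_{q_1} t_{q_2} = s_{r_1} t_{r_2} s_{s_1} t_{s_2}
\]
in the polynomial ring $K[s_1,\ldots,s_m, t_1,\ldots,t_n]$, where $p = (p_1,p_2)$, etc. Unique factorization in this polynomial ring then forces the multisets $\{p_1,q_1\} = \{r_1,s_1\}$ and $\{p_2,q_2\} = \{r_2,s_2\}$.

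The next step is a short case analysis on these multiset equalities. If $p$ and $q$ lie in the same row or the same column, both matchings force $\{p,q\} = \{r,s\}$ and the binomial is zero. Otherwise $p_1 \neq q_1$ and $p_2 \neq q_2$, and the only nontrivial matching (up to swapping $r$ and $s$) sends $r = (p_1, q_2)$ and $s = (q_1, p_2)$. Then $p, q, r, s$ are precisely the four corners of a proper interval $[a,b]$ of $\NN^2$, and the binomial equals $\pm f_{a,b}$.

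Finally, since $f_{a,b} \in L_{\Pc}$, Lemma~\ref{inner} shows that $[a,b]$ is an inner interval of $\Pc$, so $f_{a,b}$ is an inner 2-minor and therefore lies in $I_{\Pc}$. The main technical point to handle carefully is the degenerate horizontal/vertical case, where one must verify that the multiset-matching condition collapses to the trivial matching, leaving no genuinely new degree~2 relations to worry about.
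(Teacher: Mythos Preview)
Your argument is correct and follows essentially the same route as the paper: both use the inclusion $L_{\Pc}\subset J_{\Pc}$ from Theorem~\ref{contained} (you via $\phi=\alpha\circ\psi$) to see that a degree-$2$ binomial in $L_{\Pc}$ has the shape $\pm f_{a,b}$ for a proper interval $[a,b]$, and then invoke Lemma~\ref{inner} to conclude that $[a,b]$ is inner. The only difference is cosmetic: the paper cites the bipartite-graph description of $J_{\Pc}$ (degree-$2$ binomials correspond to $4$-cycles), whereas you unpack the same fact as an explicit multiset case analysis.
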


\begin{proof}
Let $f \in (L_{\Pc} )_2$. By Theorem~\ref{contained}, we have $f \in J_{\Pc}$. It shows that $f$ is a binomial associated to a cycle of length $4$ in the bipartite graph $G$ attached to $\Pc$, or equivalently, $f=f_{a,b}$, where $[a,b]$ is a proper interval of $V(\Pc)$. By Lemma~\ref{inner}, we obtain that $[a,b]$ is an inner interval of $\Pc$. Therefore, $f \in (I_{\Pc})_2$. Hence $(L_{\Pc} )_2 \subset (I_{\Pc} )_2$. The other inclusion follows from Theorem~\ref{contained}.
\end{proof}

We shall need some concepts related to lattice ideals. Let $\Lambda \subset Z^n$ be a lattice. Let $K$ be a field. The lattice ideal attached to $\Lambda$ is the binomial ideal $I_{\Lambda} \subset  K[x_1,\ldots , x_n]$ generated by all binomials
\[
x^a - x^b \quad \text{with} \quad  a- b \in \Lambda \quad \text{and} \quad a,b \in \NN^n.
\]
$\Lambda$ is called saturated if for all $a\in \ZZ^n$ and $c \in \ZZ$ such that $ca \in \Lambda$ it follows that $a\in \Lambda$. The lattice ideal $I_{\Lambda}$ is a prime ideal if and only if $\Lambda$ is saturated. Let $v_1,\ldots , v_m$ be a basis of $\Lambda$. Hosten and Shapiro \cite{HS} call the ideal generated by
the binomials $x^{v_i^+} -  x^{v_i^-}$, $i = 1, \ldots ,m$, a {\em lattice basis ideal }of $\Lambda$. Here $v^+$ denotes the vector obtained from $v$ by replacing all negative components of $v$ by zero, and $v^- = -(v - v^+)$. It is known from \cite{ES} that the ideal generated by all adjacent $2$-minors of an $m \times n$ matrix $X$ of indeterminates is a lattice basis ideal, and that the corresponding lattice ideal is just the ideal of all $2$-minors of $X$. It follows that an ideal which is generated by any set of adjacent $2$-minors of $X$ is again a lattice basis ideal and that its corresponding lattice $\Lambda$ is saturated. Therefore its lattice ideal $I_{\Lambda}$ is a prime ideal.

\begin{Theorem}
\label{latticeideal}
Let $\Pc$ be a collection of cells. Then there exists a saturated lattice $\Lambda$ such that $L_{\Pc} = I_{\Lambda}$.
\end{Theorem}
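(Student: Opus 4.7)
The plan is to realise $L_{\Pc}$ as the kernel of the exponent map associated to $\psi$, observe that this kernel is automatically saturated, and then use the standard binomial-kernel argument to identify $L_{\Pc}$ with the lattice ideal.

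Write each Laurent monomial $u_a$ as $u_a = \prod_{c \in F(\Pc)} y_c^{m(a,c)}$ with $m(a,c) \in \ZZ$, and let $\varphi \colon \ZZ^{V(\Pc)} \to \ZZ^{F(\Pc)}$ be the $\ZZ$-linear map sending the standard basis vector $e_a$ to $(m(a,c))_{c \in F(\Pc)}$. Set $\Lambda = \Ker \varphi$. Saturation is immediate: if $k v \in \Lambda$ for some nonzero $k \in \ZZ$, then $k\varphi(v) = 0$ in the torsion-free group $\ZZ^{F(\Pc)}$, forcing $\varphi(v) = 0$ and hence $v \in \Lambda$.

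The inclusion $I_{\Lambda} \subseteq L_{\Pc}$ is by construction: any generator $x^{\alpha} - x^{\beta}$ of $I_{\Lambda}$ has $\alpha - \beta \in \Lambda$, so $\prod_a u_a^{\alpha_a - \beta_a} = 1$ in the Laurent ring $T$, and the domain property of $T$ gives $\psi(x^{\alpha}) = \psi(x^{\beta})$. For the reverse inclusion, given $f = \sum_i c_i x^{\gamma_i} \in L_{\Pc}$ I partition the indices into equivalence classes under the relation $i \sim j$ precisely when $\gamma_i - \gamma_j \in \Lambda$. Since distinct Laurent monomials are $K$-linearly independent in $T$, and $\psi(x^{\gamma_i}) = \psi(x^{\gamma_j})$ holds if and only if $i \sim j$, within each class the coefficients $c_i$ must sum to zero. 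Rewriting $f$ class by class, by pairing each term against a fixed representative of its class, then expresses $f$ as a $K$-linear combination of binomials $x^{\gamma_i} - x^{\gamma_j}$ with $\gamma_i - \gamma_j \in \Lambda$, each of which lies in $I_{\Lambda}$; thus $f \in I_{\Lambda}$.

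The main (mild) subtlety is that the standard binomial-kernel statement is usually phrased for polynomial toric maps, whereas the target here is a Laurent polynomial ring. However, only $K$-linear independence of distinct monomials of $T$ is needed, and this survives the passage from polynomial to Laurent monomials, so the classical argument goes through unchanged. As a sanity check, saturatedness of $\Lambda$ is consistent with the obvious fact that $L_{\Pc}$ is prime, being the kernel of a map into the domain $T$.
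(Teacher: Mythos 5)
Your proof is correct, and it takes a genuinely different route from the paper. You define $\Lambda$ abstractly as the kernel of the exponent map $\varphi\colon \ZZ^{V(\Pc)} \to \ZZ^{F(\Pc)}$, get saturation for free from torsion-freeness of $\ZZ^{F(\Pc)}$, and then identify $L_{\Pc}$ with $I_\Lambda$ by the standard partition-by-monomial-image argument; you are right that this argument is insensitive to the target being a Laurent rather than an ordinary polynomial ring, since all it uses is $K$-linear independence of distinct monomials. The paper instead builds $\Lambda$ explicitly as the span of the cell vectors $b_k = e_{ij} + e_{i+1,j+1} - e_{i+1,j} - e_{i,j+1}$, invokes the Eisenbud--Sturmfels result on adjacent $2$-minors to obtain saturation, completes $\{b_1,\dots,b_r\}$ to a basis of $\ZZ^{V(\Pc)}$ by adjoining the free-vertex coordinate vectors (using a lex leading-term argument for independence), and shows the resulting coordinate map $\psi'$ coincides with $\psi$. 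Your argument is shorter and avoids the \cite{ES} input; what the paper's construction buys is the explicit basis $b_1,\dots,b_r$ of $\Lambda$, which is used downstream — Corollary~\ref{prime} needs the lattice basis ideal $J$ generated by the adjacent $2$-minors and the saturation $I_\Lambda = J : x^\infty$ from \cite{HS}, and Theorem~\ref{admissible} describes elements of $\Lambda$ as $\ZZ$-combinations of the $b_k$. So for the statement of this theorem in isolation your proof is a clean improvement, but the paper's more constructive version is carrying information it needs later.
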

\begin{proof}
Let $[a,b]$ be the smallest proper interval of $\NN^2$ which contains $V(\Pc)$. After a shift of coordinates, we may assume that $a=(1,1)$ and $b=(m,n)$. Let $C_1, \ldots, C_r$ be the cells of $\Pc$. To each cell $C_k$ with lower left corner $(i,j)$, we assign a vector $b_k=e_{ij} + e_{i+1,j+1}-e_{i+1,j}-e_{i,j+1} \in \ZZ^{m\times n}$, where $e_{ij}$, $i=1, \ldots, m$, $j=1, \ldots, n$ is the canonical basis of $\ZZ^{m\times n}$. Let $W$ be the sublattice of  $\ZZ^{m \times n}$ spanned by the basis vector $e_{ij}$ with $(i,j) \in V(\Pc)$, then $W \iso \ZZ^s$ where $s=|V(\Pc)|$. As explained before, the vectors $b_1, \ldots, b_r$ form a basis of a saturated lattice $\Lambda$ in $\ZZ^{m \times n}$. Since all $b_i$ belong to $W$, it follows that $\Lambda \subset W$. Therefore, we can complete $b_1, \ldots, b_r$ to a basis $ b_1, \ldots, b_r, b_{r+1}, \ldots, b_s$ of $W$. Let $V$ be a sublattice of $W$ spanned by $b_{r+1}, \ldots, b_s$, and let $\pi : W \rightarrow V$ be the projection map which assigns to $v=\sum_{i=1}^{s} v_i b_i$ the vector $\sum_{i=r+1}^{s} v_i b_i$. Then $\Lambda=\Ker \pi$. Hence $x^a - x^b \in I_{\Lambda}$ if and only if $\pi(a) = \pi(b)$.

Let $\psi' : S \rightarrow K[\{y_k : k=r+1, \ldots , s \}]$ be $K$-algebra homomorphism with $\psi'(x_{ij})=\prod_{k=r+1}^{s} y_k^{v_{ij,k}}$ where the exponents $v_{ij,k}$ are determined by the equation $e_{ij}= \sum_{k=1}^{s}v_{ij,k}b_k$. It follows from the above discussion that $\Ker \psi'= I_{\Lambda}$. Let $a_1=(i_1,j_1), \ldots, a_t=(i_t,j_t)$ be the free vertices of $\Pc$. It is clear that $t=s-r$. We claim that the set $\BB= \{b_1, \ldots, b_r, e_{i_1,j_1}, \ldots , e_{i_t, j_t}\}$ is linearly independent and hence forms a basis of $W$. We order the basis elements $e_{i,j}$ lexicographically. Then we see that the leading term of the $b_k$ is $e_{ij}$, where $(i,j)$ is the lower left corner of $C_k$. Thus we see that all leading terms of the elements of $\BB$ are linearly independent, which implies that elements of $\BB$ are linearly independent. Let $b_{r+k}=e_{i_k,j_k}$. Then the map $\psi'$ coincides with $K$-algebra homomorphism $\psi$ defined in beginning of Section~3. Therefore, $ I_{\Lambda} = \Ker \psi' =\Ker \psi =L_{\Pc}$.
\end{proof}

\begin{Corollary}
\label{prime}
Let $\Pc$ be a collection of cells. Then $I_{\Pc}$ is a prime ideal if and only if $I_{\Pc}=L_{\Pc}$.
\end{Corollary}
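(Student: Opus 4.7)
The plan is to handle the two directions very asymmetrically. The direction ($\Leftarrow$) will be immediate: Theorem~\ref{latticeideal} writes $L_{\Pc}=I_{\Lambda}$ with $\Lambda$ saturated, and the general fact recalled in the text (``$I_{\Lambda}$ is prime iff $\Lambda$ is saturated'') gives that $L_{\Pc}$ is prime, hence so is $I_{\Pc}$ whenever $I_{\Pc}=L_{\Pc}$.

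For the harder direction ($\Rightarrow$), assume $I_{\Pc}$ is prime. By Theorem~\ref{contained} we already have $I_{\Pc}\subseteq L_{\Pc}$, so the task reduces to $L_{\Pc}\subseteq I_{\Pc}$. The strategy is to work in the Laurent localization. Set $f=\prod_{a\in V(\Pc)} x_a$ and write $S_f=S[f^{-1}]$. First I would check that $f\notin L_{\Pc}$: indeed $\psi(f)=\prod_{a}u_a$ is a nonzero Laurent monomial in $T$, so $f\notin\ker\psi=L_{\Pc}$, and a fortiori $f\notin I_{\Pc}$. Since both $I_{\Pc}$ and $L_{\Pc}$ are prime ideals not containing $f$, each equals the contraction of its localization: $I_{\Pc}S_f\cap S=I_{\Pc}$ and $L_{\Pc}S_f\cap S=L_{\Pc}$. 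So it suffices to prove $L_{\Pc}S_f=I_{\Pc}S_f$.

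The core step is to produce generators of $L_{\Pc}S_f$ lying in $I_{\Pc}$. Here I would invoke the explicit description from (the proof of) Theorem~\ref{latticeideal}: $L_{\Pc}=I_{\Lambda}$ where $\Lambda$ has as basis the cell vectors $b_k=e_{(i,j)}+e_{(i+1,j+1)}-e_{(i+1,j)}-e_{(i,j+1)}$, one for each cell $C_k\in\Pc$ with lower left corner $(i,j)$. In the Laurent ring $S_f$ every generator $x^u-x^v$ of $I_{\Lambda}$ can be multiplied by the unit $x^{-v}$ to the form $x^{b}-1$ with $b=u-v\in\Lambda$; the identity
\[
x^{b'+b''}-1=x^{b'}(x^{b''}-1)+(x^{b'}-1)
\]
together with $x^{-b}-1=-x^{-b}(x^{b}-1)$ then shows inductively that $\{x^{b_k}-1:k=1,\dots,r\}$ generates $L_{\Pc}S_f$ as an ideal of $S_f$. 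Up to the unit $x^{-b_k^-}$, the generator $x^{b_k}-1$ equals $x^{b_k^+}-x^{b_k^-}=x_{(i,j)}x_{(i+1,j+1)}-x_{(i+1,j)}x_{(i,j+1)}$, which is the inner $2$-minor of the cell $C_k$ and hence lies in $I_{\Pc}$. Therefore $L_{\Pc}S_f\subseteq I_{\Pc}S_f$, and the reverse inclusion is automatic from $I_{\Pc}\subseteq L_{\Pc}$.

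Combining the two steps, $I_{\Pc}=I_{\Pc}S_f\cap S=L_{\Pc}S_f\cap S=L_{\Pc}$. The main obstacle is the technical verification in the third paragraph that, once we invert all variables, the lattice ideal is generated by its cell-basis binomials; this is a standard property of lattice ideals but it is what converts the $a$ priori weak inclusion $I_{\Pc}\subseteq L_{\Pc}$ into an equality, and it is the only place where the explicit form of the basis of $\Lambda$ from Theorem~\ref{latticeideal} is used.
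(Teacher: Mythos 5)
Your proof is correct and follows essentially the same route as the paper's: both reduce the hard direction to the fact that, after saturating/localizing at $x=\prod_{a\in V(\Pc)}x_a$, the lattice ideal $L_{\Pc}=I_{\Lambda}$ agrees with the ideal generated by the cell binomials, which lie in $I_{\Pc}$, and then use primality of $I_{\Pc}$ together with $x\notin I_{\Pc}$ to descend. The only difference is packaging: the paper cites \cite[Proposition 1.1]{HS} for $I_{\Lambda}=J\colon x^{\infty}$, whereas you re-derive the needed statement directly in $S_f$, which makes the argument self-contained but is not a different method.
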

\begin{proof}
By Theorem \ref{contained} we have $I_{\Pc} \subset L_{\Pc}$, and by Theorem~\ref{latticeideal} we have $I_{\Lambda}=L_{\Pc}$ where $\Lambda$ is the lattice with basis  $B=b_1,\ldots,b_r$ corresponding to the cells $C_1,\ldots,C_r$ as described in the previous theorem. Hence $I_{\Pc} \subset I_{\Lambda}$. Let $J$ be the lattice basis ideal corresponding to $B$. Then the generators of $J$ are precisely the $2$-adjacent minors in $I_{\Pc}$. In particular, it follows that $J\subset I_{\Pc} \subset I_{\Lambda}$. It is known from \cite[Proposition 1.1]{HS}, that $I_\Lambda= J\: x^\infty$ where $x=\prod_{a\in V(\Pc)}x_a$. Thus if $f\in  I_{\Lambda}$, then there exists an integer $k$ such that $fx^k\in J\subset I_{\Pc}$. Assuming that $I_{\Pc}$ is a prime ideal, it follows that $f\in I_{\Pc}$ since $x\not\in I_{\Pc}$. Hence we see that  $I_{\Pc}=L_{\Pc}$, if $I_{\Pc}$ is a prime ideal.  On the other hand, it is clear that if  $I_{\Pc} = L_{\Pc}$, then  $I_{\Pc}$ is a prime ideal.
\end{proof}
In order to describe the binomials in $I_{\Lambda}$, we introduce some notation. Let $\Pc$ be a collection of cells. We define horizontal and vertical intervals attached to $\Pc$. Let $a=(i,k)$ and $b=(j,k)$ with $i<j$ be in horizontal position. Then $[a,b]$ is called a {\em horizontal interval} of $\Pc$ if $\{(l,k),(l+1,k)\} \in E(\Pc)$ for $l=i, \ldots, j-1$. In addition, if $\{(i-1,k),(i,k)\}$ and $\{(j,k),(j+1,k)\}$ do not belong to $E(\Pc)$, then $[a,b]$ is called maximal horizontal interval of $\Pc$. Similarly we define the vertical intervals attached to $\Pc$. In the Figure~\ref{horizontalinterval}, fat dot marks indicate a maximal horizontal interval $[a,b]$ of $\Pc$.

\begin{figure}[hbt]
\begin{center}
\psset{unit=0.9cm}
\begin{pspicture}(4.5, 1)(4.5,3)
\pspolygon[style=fyp,fillcolor=light](2,2)(2,3)(3,3)(3,2)
\pspolygon[style=fyp,fillcolor=light](3,2)(3,3)(4,3)(4,2)
\pspolygon[style=fyp,fillcolor=light](5,2)(5,3)(6,3)(6,2)
\pspolygon[style=fyp,fillcolor=light](6,2)(6,3)(7,3)(7,2)
\pspolygon[style=fyp,fillcolor=light](2,1)(2,2)(3,2)(3,1)
\pspolygon[style=fyp,fillcolor=light](4,1)(4,2)(5,2)(5,1)
\pspolygon[style=fyp,fillcolor=light](5,1)(5,2)(6,2)(6,1)
\pspolygon[style=fyp,fillcolor=light](3,1)(3,2)(4,2)(4,1)
\pspolygon[style=fyp,fillcolor=light](6,1)(6,2)(7,2)(7,1)
\rput(2,3.3){$a$}
\rput(4,3.3){$b$}
\rput(2,3){$\bullet$}
\rput(3,3){$\bullet$}
\rput(4,3){$\bullet$}
\end{pspicture}
\end{center}
\caption{A maximal horizontal interval}\label{horizontalinterval}
\end{figure}
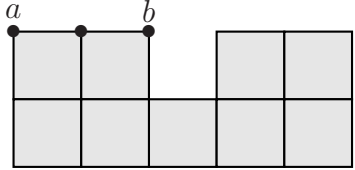

A labeling of $\Pc$ is a function $\alpha : V(\Pc) \rightarrow \ZZ$. The function $\alpha$ is called an {\em admissible labeling} of $\Pc$ if $\alpha([a,b]):=\sum_{c \in [a,b]} \alpha(c) = 0$, for all maximal horizontal and vertical intervals attached to $\Pc$. An example of an admissible labeling of a collection of cells is shown in Figure~\ref{admissible}.

\begin{figure}[hbt]
\begin{center}
\psset{unit=0.9cm}
\begin{pspicture}(4.5,-0.5)(4.5,4)
\rput(-1,0) {
\pspolygon[style=fyp,fillcolor=light](4,0)(4,1)(5,1)(5,0)
\pspolygon[style=fyp,fillcolor=light](3,2)(3,3)(4,3)(4,2)
\pspolygon[style=fyp,fillcolor=light](5,1)(5,2)(6,2)(6,1)
\pspolygon[style=fyp,fillcolor=light](4,2)(4,3)(5,3)(5,2)
\pspolygon[style=fyp,fillcolor=light](4,0)(4,1)(5,1)(5,0)
\pspolygon[style=fyp,fillcolor=light](5,2)(5,3)(6,3)(6,2)
\pspolygon[style=fyp,fillcolor=light](6,0)(6,1)(7,1)(7,0)
\pspolygon[style=fyp,fillcolor=light](6,2)(6,3)(7,3)(7,2)
\pspolygon[style=fyp,fillcolor=light](4,1)(4,2)(5,2)(5,1)
\rput(2.85,3.25){4}
\rput(3.8,3.3){-2}
\rput(4.8,3.3){0}
\rput(5.8,3.3){-3}
\rput(6.9,3.3){1}
\rput(2.8, 1.7){-4}
\rput(3.8,1.7){2}
\rput(4.8,1.7){1}
\rput(5.8,1.7){2}
\rput(6.9, 1.7){-1}
\rput(3.75, 0.7){-1}
\rput(4.8,0.7){0}
\rput(5.8,0.7){3}
\rput(7.25,0.7){-2}
\rput(3.8,-0.3){1}
\rput(4.8,-0.3){-1}
\rput(5.8,-0.3){-2}
\rput(6.9,-0.3){2}
}
\end{pspicture}
\end{center}
\caption{An admissible labeling}\label{marked}
\end{figure}
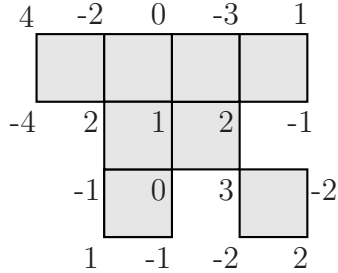

\begin{Lemma}
\label{matrix}
Let $X$ be an $m \times n$ integer matrix with the property that all its column sums are zero. Let $i$ be an integer with
$1 \leq i \leq m$, and suppose that for all $j \neq i$ the row sum for the $j$-th row is zero. Then the row sum of the $i$-th row is also zero.

\end{Lemma}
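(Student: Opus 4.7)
The plan is to observe that the sum of all entries of $X$ may be computed in two ways: either by summing the column sums or by summing the row sums. First I would denote by $\sigma$ the sum of all entries of $X$, i.e.\ $\sigma = \sum_{j=1}^m \sum_{k=1}^n X_{jk}$.

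Next I would compute $\sigma$ by first summing each column and then adding the resulting column sums. Since by hypothesis every column sum is zero, this yields $\sigma = 0$. On the other hand, computing $\sigma$ by first summing each row and then adding the row sums gives $\sigma = \sum_{j=1}^m r_j$, where $r_j$ denotes the row sum of the $j$-th row. By hypothesis $r_j = 0$ for all $j \neq i$, so the identity $0 = \sigma = \sum_{j=1}^m r_j$ collapses to $r_i = 0$, which is exactly the desired conclusion.

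There is essentially no obstacle: the statement is a direct application of the Fubini-type identity that double sums can be evaluated in either order. The proof is a two-line calculation, and no hypothesis beyond those stated is required.
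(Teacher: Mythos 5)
Your proof is correct and is essentially the same double-counting argument the paper uses: the paper forms the vector obtained by summing all columns (stated with a typo as ``rows'') and observes its component sum is the total sum of entries, which vanishes because all column sums do; you simply phrase this directly as evaluating $\sigma = \sum_{j,k} X_{jk}$ in both orders. No difference in substance, and yours is if anything cleaner.
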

\begin{proof}
Adding all the rows vectors we obtain a vector $v$ whose components are zero, except possibly at the $i$-th component. Now, because all column sums of $X$ are zero, it follows that sum of the component of $v$ is zero. Hence, the $i$-th component of $v$ must be zero.
\end{proof}

\begin{Theorem}
\label{admissible}
Let $\Pc$ be a collection of cells and $\Lambda$ be the lattice attached to $\Pc$.
\begin{enumerate}
\item[{\em (a)}] If an irreducible binomial belongs to $I_{\Lambda}$, then it is of the form
\[
f_{\alpha} = \prod_{a \in V(\Pc) \atop \alpha(a) > 0} x_a^{\alpha(a)} - \prod_{a \in V(\Pc) \atop \alpha(a) < 0} x_a^{\alpha(a)} ,
 \]
where $\alpha$ is an admissible labeling of $\Pc$.

\item[{\em (b)}] If $\Pc$ is a simple collection of cells and $\alpha$ is an admissible labeling, then $f_{\alpha} \in I_{\Lambda}$.
\end{enumerate}
\end{Theorem}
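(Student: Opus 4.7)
My plan handles parts (a) and (b) separately.

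\emph{Part (a).} Given an irreducible binomial $f=x^u-x^v\in I_\Lambda$ with $u,v\in\NN^{V(\Pc)}$ of disjoint support, set $\alpha(a)=u_a-v_a$. Then $\alpha>0$ on $\supp u$ and $\alpha<0$ on $\supp v$, so $f=f_\alpha$ in the notation of the theorem. Since $f\in I_\Lambda$, the vector $u-v$ lies in $\Lambda$, so we may write $\alpha=\sum_{C\in\Pc}\mu_C b_C$ with $\mu_C\in\ZZ$. Admissibility of $\alpha$ then follows by linearity from a single claim: for each cell $C\in\Pc$, the generator $b_C$ has zero sum over every maximal horizontal and every maximal vertical interval $I$. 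Indeed, only the (at most two) vertices of $C$ lying on the row (resp.\ column) of $I$ can contribute, and those two vertices form an edge of $C\in\Pc$, hence an edge of $E(\Pc)$, with opposite signs $\pm 1$. By maximality of $I$, no edge of $E(\Pc)$ has exactly one endpoint in $I$, so either both endpoints lie in $I$ (and the $\pm 1$ cancel) or neither does. Either way the contribution is zero.

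\emph{Part (b), setup.} Assume $\Pc$ is simple and $\alpha$ admissible; I must show $\alpha\in\Lambda$. For $(k,l)\in\ZZ^2$ define
\[
S(k,l)\;=\;\sum_{(i,j)\in V(\Pc),\;i\le k,\;j\le l}\alpha(i,j),
\]
and for each $C\in\Pc$ with lower-left corner $(i,j)$ set $\lambda_C=S(i,j)$. The candidate identity is $\alpha=\sum_{C\in\Pc}\lambda_C b_C$. Evaluating its right-hand side at an arbitrary $(k,l)\in\ZZ^2$ yields the alternating expression $\lambda_{(k,l)}-\lambda_{(k-1,l)}-\lambda_{(k,l-1)}+\lambda_{(k-1,l-1)}$, with the convention $\lambda_{(i,j)}:=0$ whenever no cell of $\Pc$ has lower-left $(i,j)$. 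If all four positions do carry cells of $\Pc$, a telescoping/inclusion--exclusion calculation turns this alternating sum into $\alpha(k,l)$ for $(k,l)\in V(\Pc)$ and into $0$ otherwise. In general some of the four positions need not carry cells, and the corresponding $\lambda$-values are forced to $0$; the decomposition will still be correct precisely when these forced zeros agree with $S$, i.e.\ provided
\[
S(i,j)=0\qquad\text{whenever }\; C_{(i,j)}\notin\Pc. \qquad(\ast)
\]

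\emph{Part (b), proof of $(\ast)$ and the main obstacle.} This is where the hypothesis \emph{simple} enters, and it is the heart of the theorem. First, enlarge the bounding interval $[a,b]$ so that $V(\Pc)\subset\int([a,b])$; then on every border cell of $[a,b]$ the quantity $S$ is either an empty sum, a partial sum that collapses to complete maximal intervals along a single row or column, or the total $\sum_{v\in V(\Pc)}\alpha(v)$, all of which vanish since admissibility forces every row and every column of $V(\Pc)$ to decompose into maximal intervals summing to $0$. Second, if two cells $C_1,C_2\notin\Pc$ share an edge, then $S$ takes equal values at their lower-left corners: the difference is a partial row or column sum of $\alpha$ cut immediately adjacent to the shared edge, but that edge is absent from $E(\Pc)$ (neither $C_1$ nor $C_2$ contributes it), so the cut splits no maximal interval, and the partial sum is a disjoint union of complete maximal intervals, hence $0$. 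Third, by simplicity of $\Pc$, every cell of $[a,b]$ outside $\Pc$ is joined to a border cell of $[a,b]$ by an edge-path of non-$\Pc$ cells; propagating the second step along such a path and invoking the first step at its endpoint yields $(\ast)$. This final step is the main obstacle of the proof, for without simplicity a ``hole'' of $\Pc$ could carry nonzero $S$-value and produce admissible labelings outside $\Lambda$, precisely as in the chessboard example of Figure~\ref{chessboard}.
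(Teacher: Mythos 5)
Your argument is correct, and part (b) is genuinely different from the paper's. Part (a) matches the paper's approach (each generator $b_C$ is itself an admissible labeling, then extend by linearity), and you usefully spell out the edge-cancellation step that the paper leaves as an assertion. For part (b) the paper proceeds by induction on $|\Pc|$: it removes the cell $C$ with a corner $a$ of minimal $|a|$, passes from $\alpha$ to $\alpha-\alpha(a)\alpha_C$, and then shows, using the way maximal intervals split upon removing $C$ together with Lemma~\ref{matrix}, that the new labeling is again admissible for $\Pc\setminus\{C\}$, which is again simple. You instead exhibit the coefficients $\lambda_C=S(\text{lower-left of }C)$ in closed form via the partial-sum function $S$, observe that the second difference of $S$ reproduces $\alpha$ on $V(\Pc)$, and reduce the whole theorem to the single combinatorial claim $(\ast)$: $S$ vanishes on every cell of the bounding box not in $\Pc$. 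Your three-step verification of $(\ast)$ is sound; the two observations that carry it — every row and every column of $V(\Pc)$ is a disjoint union of maximal intervals (hence has zero sum under admissibility), and a shared edge of two non-$\Pc$ cells cannot lie in $E(\Pc)$ and therefore never splits a maximal interval — are both correct. Your route avoids the induction (and the need to know $\Pc\setminus\{C\}$ is still simple), avoids the matrix lemma, and pinpoints precisely where simplicity is used, namely to propagate $S=0$ from the border of $[a,b]$ into every hole. One small imprecision in your Step 1: for a border cell on, say, the right edge of $[a,b]$, $S$ is a sum over \emph{several} complete rows, not a quantity along a single row or column; but the justification you actually invoke — that each row and column of $V(\Pc)$ has zero total — is the correct one and makes the step go through.
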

\begin{proof}
 The lattice $\Lambda \subset \ZZ^{V(\Pc)}$ attached to $\Pc$ consists of all integer vectors $v \in \ZZ^{V(\Pc)}$ which are linear combination of the basis vectors $b_1, \ldots , b_r$ corresponding to the cells of $\Pc$, see the proof of Theorem~\ref{latticeideal}. We claim that if $v = (v_a)_{a \in V(\Pc)} \in \Lambda$, then $\alpha: V(\Pc) \rightarrow \ZZ$, $a \mapsto v_a$, is an admissible labeling, and the converse holds if $\Pc$ is simple.

Let $b_i = (b_{i,a})_{a \in V(\Pc)}$. Then $\alpha_i : V(\Pc) \rightarrow \ZZ$ defined by $a \mapsto b_{i,a}$ is an admissible labeling. Now let $v=\sum_{i=1}^{r} \lambda_i b_i$, $\lambda_i \in \ZZ$. If we let $\alpha: V(\Pc) \rightarrow \ZZ$ be the map $a \mapsto v_a$,  then it follows that $\alpha = \sum_{i=1}^{r} \lambda_i \alpha_i$. Consequently, $\alpha$ is admissible.

Now suppose that $\Pc$ is simple. Let $\alpha: V(\Pc) \rightarrow \ZZ$ be an admissible labeling,
and let $v=(v_a)_{a \in V(\Pc)}$, where $v_a= \alpha(a)$. We want to show that $v \in \Lambda$.
We may assume that $\Pc$ is weakly connected. Let $a=(i,j) \in V(\Pc)$ such that $|a|=i+j$ is minimal.
Then $a$ is a lower left corner of a cell $C$. Let $\lambda = \alpha(a)$.
Then the admissible labeling  $\alpha'=\alpha - \lambda \alpha_i$ has the property that $\alpha' (a)=0$.
We claim that $\alpha'$ is an admissible labeling for $\Pc'= \Pc / \{C\}$. Assuming this,
by induction on the number of cells we obtain the desired conclusion, since $\Pc'$ is again simple.

In order to prove the claim, we first observe the any maximal horizontal or vertical interval of $\Pc$ which has no common vertex with $C$ is also a maximal interval of $\Pc'$, and $\alpha'([a,b])=0$ for any such interval. An interval $[a,b]$ of $\Pc$ is no longer an interval of $\Pc'$ in the cases indicated in the Figure~\ref{notinterval}. The cells marked as dark region, for example $C$ and $E$, represent  cells of $\Pc$.

\begin{figure}[hbt]
\begin{center}
\psset{unit=0.8cm}
\begin{pspicture}(4.5,-2)(4.5,4)
\rput(-4,0){
\pspolygon[style=fyp,fillcolor=light](3,1)(3,2)(4,2)(4,1)
\pspolygon[style=fyp](2,1)(2,2)(3,2)(3,1)
\pspolygon[style=fyp,fillcolor=light](1,1)(1,2)(2,2)(2,1)
\rput(4.5,1.1){$\cdots$}
\rput(4.5,1.9){$\cdots$}
\pspolygon[style=fyp,fillcolor=light](2,0)(2,1)(3,1)(3,0)
\pspolygon[style=fyp,fillcolor=light](5,1)(5,2)(6,2)(6,1)
\rput(0.9,0.75){$a$}
\rput(1.75,0.75){$c$}
\rput(3.3,0.75){$d$}
\rput(6.2,0.75){$b$}
\rput(2.5,0.5){$C$}
\rput(2.5,1.5){$D$}
\rput(1.5,1.5){$E$}
\rput(3.8,-1){A horizontal interval}
}
\rput(15,-2){
\pspolygon[style=fyp,fillcolor=light](-6,1)(-5,1)(-5,2)(-6,2)
\pspolygon[style=fyp](-6,2)(-5,2)(-5,3)(-6,3)
\pspolygon[style=fyp,fillcolor=light](-6,3)(-5,3)(-5,4)(-6,4)
\pspolygon[style=fyp,fillcolor=light](-6,5)(-5,5)(-5,6)(-6,6)
\pspolygon[style=fyp,fillcolor=light](-7,2)(-6,2)(-6,3)(-7,3)
\rput(-6,0.3){A vertical interval}
\rput(-5.5,1.5){$E$}
\rput(-6.5,2.5){$C$}
\rput(-5.5,2.5){$D$}
\rput(-6.2,1){$a$}
\rput(-6.2,1.75){$c$}
\rput(-6.2,3.3){$d$}
\rput(-6.2,6.25){$b$}
\rput(-5.9,4.6){$\vdots$}
\rput(-5.1,4.6){$\vdots$}
}

\end{pspicture}
\end{center}
\caption{}\label{notinterval}
\end{figure}

We discuss only the first case when $[a,b]$ is the horizontal interval. The argument for the case when $[a,b]$ is the vertical interval is similar. As indicated in Figure~\ref{notinterval}, the interval $[a,b]$ splits into two intervals,  namely $[a,c]$ and $[d,b]$. We need to show that $\alpha'([a,c])=0$ and $\alpha'([d,b])=0$. Since $\alpha'$ is an admissible labeling of $\Pc$, we have that $\alpha'([a,b])=0$. Hence, $\alpha'([d,b])=0$ if and only if $\alpha'([a,c])=0$. This is the case if and only if $\alpha'(a)=-\alpha'(c)$. Since $D \notin \Pc$, and $\Pc$ is simple, it follows that $D$ is connected to a border cell of a proper interval in $\NN^2$ whose interior contains $V(\Pc)$. It follows that $\Pc'$ consists of two weakly connected components. We denote by $\mathcal{Q}$ the weakly connected component of $\Pc'$ which contains $E$. Any horizontal or vertical interval $[e,f]$ of $\mathcal{Q}$ different from $[a,c]$ has the property that $\alpha'([e,f])=0$. Let $I$ be the smallest interval of $\NN^2$ containing $\mathcal{Q}$. We extend $\alpha'$ to $\hat{\alpha}: I \rightarrow \ZZ$ by setting $\hat{\alpha} (g)=0$, if $g \notin \mathcal{Q}$. Then, we obtain an integer matrix whose entries are indexed by vertices of $I$ with the property that all column sums and row sums are zero, except the bottom row. By Lemma~\ref{matrix}, it follows that the row sum of the bottom row of this matrix is zero. This implies that $\alpha'(a)=-\alpha'(c)$.

Finally, if $f$ is an irreducible binomial in $I_{\Lambda}$, then $f = x^{v_+} - x^{v_-}$ where $v \in \Lambda$. Therefore, the assertion of the theorem follows from the above discussion.
\end{proof}
Let $\alpha: V(\Pc) \rightarrow \ZZ$ be an admissible labeling of $\Pc$ and $[a,b]$ be an inner interval of $\Pc$ with corners $a$, $b$, $c$ and $d$. Suppose $\alpha(a) \alpha(b) > 0$. Now, we define two admissible labelings of $\Pc$  as follows
\[
\beta(e)= \left\{ \begin{array}{ll}
          1, &  \text{if  $e=a$ or $e=b$}, \\
         -1, &\text{if  $e=c$ or $e=d$}, \\
           0,& \text{elsewhere}.
        \end{array} \right.
\]
and
\[
\alpha'= \left\{ \begin{array}{ll}
        \alpha - \beta, & \text{if  $\alpha(a) > 0$}, \\
        \alpha + \beta, & \text{if  $\alpha(a) < 0$}.
                    \end{array} \right.
\]
We say that $\alpha'$ is obtained from $\alpha$ by a {\em single move}. Similarly, one can define a single move if $\alpha(c) \alpha(d) > 0$ by replacing $a$, $b$ by $c$, $d$ in the definition of $\beta$ and $\alpha'$. We say an admissible labeling $\alpha$ of $\Pc$ reduces to $0$ if there exists a sequence
\[
\alpha=\alpha_0, \alpha_1, \ldots, \alpha_k=0
\]
where each $\alpha_i$ is an admissible labeling of $\Pc$, and  $\alpha_{i+1}$ is obtained from $\alpha_{i}$ by a single move for $i=0, \ldots, k-1$.

\begin{Corollary}
\label{moves}
Let $\Pc$ be a simple collection of cells. Then $I_{\Pc}$ is a prime ideal if and only if each admissible labeling on $\Pc$ reduces to $0$ by a finite number of moves.
\end{Corollary}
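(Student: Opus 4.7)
The plan is to establish the corollary by relating single moves to computations modulo $I_{\Pc}$, and then invoking Corollary~\ref{prime}. The crux of both directions is a single key observation: for any single move $\alpha \to \alpha'$ associated with an inner interval $[a,b]$ with anti-diagonal corners $c,d$, there exists a monomial $m$ in the variables $x_a, x_b, x_c, x_d$ such that
\[
f_\alpha \equiv m \cdot f_{\alpha'} \mod I_{\Pc}.
\]
To verify this, assume (after possibly negating) $\alpha(a),\alpha(b) > 0$ so that $\alpha' = \alpha - \beta$. Then $x^{\alpha_+}$ carries $x_a x_b$ as a factor, and rewriting via the defining relation $x_a x_b \equiv x_c x_d \mod I_{\Pc}$ transforms the positive monomial of $f_\alpha$ into $x_c x_d \cdot (x^{\alpha_+}/(x_a x_b))$. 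Factoring out the greatest common monomial with $x^{\alpha_-}$, which depends on the signs of $\alpha(c), \alpha(d)$, recovers $f_{\alpha'}$ multiplied by a monomial $m$. The remaining cases (both diagonal values negative, or an anti-diagonal move with $\alpha(c)\alpha(d) > 0$) are handled identically by symmetry.

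For the direction \emph{moves $\Rightarrow$ prime}, suppose every admissible labeling reduces to $0$ by single moves. Given an admissible $\alpha$ with a reduction $\alpha = \alpha_0, \alpha_1, \ldots, \alpha_k = 0$, applying the key observation backwards from $f_{\alpha_k} = 0 \in I_{\Pc}$ yields $f_{\alpha_i} \in I_{\Pc}$ for every $i$, hence $f_\alpha \in I_{\Pc}$. Since $\Pc$ is simple, Theorem~\ref{admissible}(a,b) identifies the irreducible binomials of $L_{\Pc} = I_{\Lambda}$ with the $f_\alpha$ for admissible $\alpha$, and any lattice ideal is generated by its irreducible binomials, so $L_{\Pc} \subset I_{\Pc}$. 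Combined with the inclusion $I_{\Pc} \subset L_{\Pc}$ from Theorem~\ref{contained}, we get $I_{\Pc} = L_{\Pc}$, and primeness follows from Corollary~\ref{prime}.

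For the direction \emph{prime $\Rightarrow$ moves}, assume $I_{\Pc} = L_{\Pc}$, so $f_\alpha \in I_{\Pc}$ for every admissible $\alpha$. I would argue by Noetherian induction on $\mathrm{LT}(f_\alpha)$ under the diagonal lex order, where $x_{(i,j)} > x_{(k,l)}$ if $i < k$, or $i = k$ and $j < l$, so that every inner $2$-minor has leading term $x_a x_b$. For $\alpha \neq 0$, expressing $f_\alpha$ as an $S$-combination of inner $2$-minors produces at least one generator whose leading term $x_{a_i} x_{b_i}$ divides $\mathrm{LT}(f_\alpha)$. If $\mathrm{LT}(f_\alpha) = x^{\alpha_+}$, this forces $\alpha(a_i), \alpha(b_i) > 0$ and gives a valid single move $\alpha \to \alpha'$; the symmetric case $\mathrm{LT}(f_\alpha) = x^{\alpha_-}$ yields a valid move with two negative signs. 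The key observation combined with primeness (so that the monomial $m$ is a nonzerodivisor modulo $I_{\Pc}$) shows $f_{\alpha'} \in I_{\Pc}$, and a direct check confirms $\mathrm{LT}(f_{\alpha'}) < \mathrm{LT}(f_\alpha)$. Noetherian induction then terminates the reduction at $\alpha = 0$.

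The technically more delicate direction is \emph{prime $\Rightarrow$ moves}. The main obstacle is verifying that each reduction step corresponds to exactly one of the moves defined in the text---with the correct sign condition on a diagonal or anti-diagonal pair rather than a mixed configuration---and that the inevitable cancellations between $x^{\alpha_+}$ and $x^{\alpha_-}$ do not disrupt the strict decrease of the leading monomial. Both points come down to careful case analysis on the signs of $\alpha(c), \alpha(d)$ and tracking which common factors are absorbed into the monomial $m$ in the key observation.
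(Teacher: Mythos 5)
The overall strategy---reduce the corollary to the equality $I_{\Pc}=L_{\Pc}$ via Corollary~\ref{prime}, then relate that equality to reducibility of admissible labelings---matches the paper, and your key observation (that a single move $\alpha\to\alpha'$ gives $f_\alpha\equiv m\,f_{\alpha'}\pmod{I_{\Pc}}$ for some monomial $m$) is correct and does the work you claim for the implication ``moves $\Rightarrow$ prime.'' However, the direction ``prime $\Rightarrow$ moves'' as you write it has a genuine gap.

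You assert that, because $f_\alpha\in I_{\Pc}=(f_{a_1,b_1},\ldots,f_{a_k,b_k})$, some generator must have leading term dividing $\mathrm{LT}(f_\alpha)$. That is precisely the property that \emph{characterizes} Gr\"obner bases, and it is not available to you here: nothing in the hypotheses tells you that the inner $2$-minors form a Gr\"obner basis of $I_{\Pc}$ for your diagonal lex order (indeed, for a general simple, non-stack collection of cells they need not). Membership $f\in(g_1,\ldots,g_k)$ alone never yields $\mathrm{LT}(g_i)\mid\mathrm{LT}(f)$ for some $i$---that is exactly what can fail when $\{g_i\}$ is not a Gr\"obner basis, and it is what the Buchberger algorithm is designed to repair. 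So the Noetherian induction never gets off the ground: you cannot guarantee the existence of even one valid single move from $\alpha$. What is actually available when $I_{\Pc}$ is a lattice ideal generated by the inner $2$-minors is the Markov-basis connectivity statement (for $u,v\in\NN^{V(\Pc)}$ with $u-v\in\Lambda$, one can walk from $u$ to $v$ in $\NN^{V(\Pc)}$ by steps $\pm\beta$), and one must then verify that such a walk translates into a chain of the paper's \emph{admissible-labeling} moves $\alpha_i\to\alpha_{i+1}$ with the sign condition $\alpha_i(a)\alpha_i(b)>0$ holding at each stage; that translation is not a triviality (the positive/negative parts of $\alpha_i$ and the walk points $u_i$ can differ by a nonzero common factor when cancellation occurs). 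The paper sidesteps the issue by appealing directly to Theorem~\ref{degree}, Theorem~\ref{admissible}, and the correspondence between degree-$2$ generators of $L_{\Pc}$ and moves, rather than by a leading-term induction; your moves $\Rightarrow$ prime half is a nice explicit spelling-out of part of that, but the converse needs a different argument than the one you give.
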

\begin{proof}
By Corollary~\ref{prime} we know that $I_{\Pc}$ is a prime ideal if and only if $I_{\Pc}= L_{\Pc}$ and by Theorem~\ref{degree}, this is the case if and only if $L_{\Pc}$ is generated in degree $2$. Since the generators of degree $2$ of $L_{\Pc}$ correspond to simple moves, the assertion follows from Theorem~\ref{admissible}.
\end{proof}

\begin{Theorem}
\label{colconvex}
Let $\Pc$ be a simple collection of cells such that each connected component of $\Pc$ is row or column convex. Then $I_{\Pc}$ is a prime ideal.
\end{Theorem}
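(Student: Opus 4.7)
The plan is to apply Corollary~\ref{moves}: it suffices to show that every admissible labeling $\alpha$ of $\Pc$ reduces to $0$ by a finite sequence of single moves. I first reduce to the weakly connected case: distinct weakly connected components of $\Pc$ have disjoint vertex sets, and both admissibility and single moves decompose across them, while each weakly connected component inherits simplicity and the row- or column-convexity of its connected components. Assuming $\Pc$ is weakly connected, Lemma~\ref{tree} asserts that the graph $G$ of connected components of $\Pc$ (with edges recording pairs sharing a vertex) is a tree, and I induct on the number of vertices of $G$.

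The base case handles a single row- or column-convex connected polyomino; by transposition I assume row-convex and induct on the number of rows. Let $R$ be the topmost row, a contiguous strip of cells at height $h$ spanning columns $l,\ldots,r$. The top-edge vertices $(l,h+1),\ldots,(r+1,h+1)$ form a maximal horizontal interval, so their labels sum to zero by admissibility. Row-convexity makes every rectangular subregion of $R$ an inner interval of $\Pc$, providing enough single moves within $R$ to zero out $\alpha$ at the vertices of $V(\Pc) \setminus V(\Pc \setminus R)$ (those destroyed by removing $R$) while preserving admissibility. The restriction of the modified labeling to $\Pc \setminus R$ is then admissible on the simple, connected, row-convex polyomino $\Pc \setminus R$, and the inner inductive hypothesis applies.

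For the inductive step on connected components, pick a leaf $\Pc_1$ of $G$, sharing a unique vertex $p$ with the remainder. By the proof of Theorem~\ref{admissible}(b), the lattice $\Lambda_{\Pc}$ of admissible labelings is $\sum_i \Lambda_{\Pc_i}$, so I may write $\alpha = \alpha_1 + \alpha'$ with $\alpha_1 \in \Lambda_{\Pc_1}$ (hence admissible on $\Pc_1$) and $\alpha' \in \sum_{i \neq 1} \Lambda_{\Pc_i}$. The base case, applied to $\Pc_1$, reduces $\alpha_1$ to $0$ by single moves inside $\Pc_1$; these are single moves on $\alpha$ as well, since shifts by the corresponding cell vectors lie in $\Lambda_{\Pc}$ and so preserve $\Pc$-admissibility. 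What remains is $\alpha'$, which is admissible on $\Pc \setminus \Pc_1$ and has one fewer connected component, so the outer inductive hypothesis concludes.

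The main obstacle lies in the base case: after zeroing the top edge of $R$, admissibility of the restriction to $\Pc \setminus R$ is delicate when $R$ overhangs the row below, since the maximal horizontal interval at height $h$ is then strictly larger in $\Pc$ than in $\Pc \setminus R$. Row-convexity is precisely the hypothesis guaranteeing enough inner intervals in $R$ to also zero the overhanging labels at height $h$, so that the resulting restriction is indeed admissible and the inner induction goes through.
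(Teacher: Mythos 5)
Your proposal follows the paper's skeleton at the top level: invoke Corollary~\ref{moves}, reduce to the weakly connected case, and exploit the tree structure of Lemma~\ref{tree}. But the way you execute the inductive step on connected components diverges from the paper, and that divergence creates a genuine gap.

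You split $\alpha = \alpha_1 + \alpha'$ with $\alpha_1 \in \Lambda_{\Pc_1}$ and $\alpha' \in \sum_{i\ne 1}\Lambda_{\Pc_i}$, reduce $\alpha_1$ to $0$ by single moves inside $\Pc_1$, and claim that ``these are single moves on $\alpha$ as well, since shifts by the corresponding cell vectors lie in $\Lambda_{\Pc}$ and so preserve $\Pc$-admissibility.'' This argument only shows that the intermediate labelings $\alpha - \beta_1 - \cdots - \beta_k$ remain admissible on $\Pc$; it does not address the sign condition $\alpha(a)\alpha(b) > 0$ (or $\alpha(c)\alpha(d) > 0$) required in the definition of a single move. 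The labelings $\alpha$ and $\alpha_1$ agree on $V(\Pc_1)\setminus\{p\}$ but differ at the shared vertex $p$ (where $\alpha(p) = \alpha_1(p) + \alpha'(p)$), so a move in the reduction of $\alpha_1$ that uses an inner interval of $\Pc_1$ with $p$ as a corner may satisfy the sign test for $\alpha_1$ yet fail it for $\alpha$. Without controlling this, the move sequence for $\alpha_1$ need not lift to a valid move sequence for $\alpha$.

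The paper avoids this entirely by never decomposing the labeling: it inducts on the total number of cells, picks a cell of the leaf component at the far end of an extreme column (so away from the shared vertex), reduces $\alpha$ itself at one corner of that cell to $0$ by moves inside the leaf, and strips the cell. All sign conditions are checked against the actual values of $\alpha$, and the shared vertex only enters at the very last cell of the leaf component, by which time the restriction to the remaining collection is already admissible. You would need a comparable argument---for example, showing that the reduction of $\alpha_1$ can always be arranged to avoid $p$ until the labels on $V(\Pc_1)\setminus\{p\}$ are all zero---to close the gap. A related issue appears in your base case: the claim that row-convexity provides ``enough single moves within $R$'' to zero the top edge and the overhang labels is a statement about the existence of a sign-consistent move path, not merely a lattice reduction, and it is asserted rather than proved; the paper's cell-by-cell induction with an explicit case analysis on horizontal interval sizes is what supplies that missing verification.
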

\begin{proof}
Let $\alpha$ be an admissible labeling of $\Pc$, then by Corollary~\ref{moves} it is enough to show that $\alpha$ can be reduced to $0$ by a finite number of moves. We may assume that $\Pc$ is weakly connected. By Lemma~\ref{tree}, we know that the graph $G$ attached to $\Pc$ is a tree. Let $\Pc_r$ be a connected component of $\Pc$ such that $r$ is a free vertex of $G$, in other words $r$ is a vertex of order $1$. Let $s$ be the unique element in $V(G)$ such that $\{r,s\} \in E(G)$. Then $|\Pc_r \cap \Pc_s| =1$ and $\Pc_r \cap \Pc_j = \emptyset$, for $j \neq r,s$. We may assume that $\Pc_r$ is column convex with columns $\Cc_1, \ldots, \Cc_n$ such that $\Cc_i \cap \Cc_{i+1} \neq \emptyset$, $i=1, \ldots, n-1$.

We distinguish two cases. In the first case we assume that $n=1$. Then $\Pc_r =[A,B]$ with the lower left corner $a=(i,j)$ of the cell $A$ and $b=(i,k)$ the lower left corner of the cell $B$. We may assume that $\Pc_r \cap \Pc_s = \{(i,j)\}$. Let $c=(i, k+1)$, $d=(i+1, k+1)$ and $e=(i+1,k)$  be the corners of $B$. If $\alpha(c) = 0$, then $\alpha(d)= 0$, because $\alpha$ is an admissible labeling. It follows that $\alpha$ restricted to $\Pc / \{B\}$ is again an admissible labeling. Then by applying induction on the number of cells, we obtain the desired conclusion. Now assume that $\alpha(c) \neq 0$. Without loss of generality we assume that $\alpha(c) > 0$. Then $\alpha(d)=-\alpha(c) < 0$. Since $[f,d]$ is a vertical interval of $\Pc$ where $f=(i+1,j)$, we have $\alpha([f,d])=0$. This shows that  there exists $g \in [f,d]$, $g \neq d$ with $\alpha(g) > 0$. Then there exists a single move $\alpha_1$ obtained from $\alpha$ such that $\alpha_1(c)= \alpha(c) - 1$ and $\alpha_1(g)= \alpha(g) - 1$. Proceeding in this way, we obtain a finite sequence of moves $\alpha=\alpha_0, \alpha_1, \ldots, \alpha_r$, such that $\alpha_r(c) =0$. Then we can remove the cell $B$ as we discussed before.

Now we consider the case when $n \geq 2$. Then either $\Cc_1$ or $\Cc_n$ is disjoint with $\Pc_s$. We may assume that $\Cc_1 \cap \Pc_s= \emptyset$ and that $\Cc_1$ is the left most column of $\Pc_r$. Let $\Cc_1=[A,B]$, and let $a$ be the lower left corner of $A$. We may assume that the maximal horizontal interval containing a is contained in $V(\Pc_r$). Indeed, if this is not the case and $b$ is the upper left corner of $B$, then the  maximal horizontal interval containing $b$ will be contained in $V(\Pc_r)$, and we may replace $a$ by $b$ in the following discussions.

Suppose that $\alpha(a)=0$. Then by  similar arguments as in the case when $n=1$, it follows that $\alpha$ restricted to $\Pc \setminus \{A\}$ is again an admissible labeling. Applying induction on the number of cells, we obtain the desired conclusion. Now we may assume that $\alpha(a) > 0 $. Let $[a,f]$ and $[a,e]$ be the maximal horizontal and vertical intervals of $\Pc$ containing $a$. Since $\alpha([a,e])=0$ and $\alpha([a,f])=0$, there exist $b \in [a,e]$ and $c \in [a,f]$ such that $\alpha(b), \alpha(c) < 0$. Let $[b,g]$ be the maximal horizontal interval of $\Pc$ which contains $b$. Then there exists a vertex $h \in [b,g]$ such that $\alpha(h) > 0$. If $\size ([b,g]) \leq \size([a,f])$, then by using the fact that $\Pc_r$ is column convex, we obtain that $[a,h]$ is an inner interval of $\Pc$, for example, see Figure~\ref{biginterval}. Hence $\alpha(a) \alpha(h) > 0$.

\begin{figure}[hbt]
\begin{center}
\psset{unit=0.9cm}
\begin{pspicture}(4.5,-0.5)(4.5,3)
\rput(-1.5,0){
\pspolygon[style=fyp,fillcolor=light](4,0)(4,1)(5,1)(5,0)
\pspolygon[style=fyp,fillcolor=light](4,1)(4,2)(5,2)(5,1)
\pspolygon[style=fyp,fillcolor=light](4,2)(4,3)(5,3)(5,2)
\pspolygon[style=fyp,fillcolor=light](5,0)(5,1)(6,1)(6,0)
\pspolygon[style=fyp,fillcolor=light](5,1)(5,2)(6,2)(6,1)
\pspolygon[style=fyp,fillcolor=light](6,0)(6,1)(7,1)(7,0)
\pspolygon[style=fyp,fillcolor=light](6,1)(6,2)(7,2)(7,1)
\pspolygon[style=fyp,fillcolor=light](7,0)(7,1)(8,1)(8,0)
\rput(4,0){$\bullet$}
\rput(6,0){$\bullet$}
\rput(8,0){$\bullet$}
\rput(4,2){$\bullet$}
\rput(4,3){$\bullet$}
\rput(5,2){$\bullet$}
\rput(7,2){$\bullet$}

\rput(3.7,-0.3){$a$}
\rput(6,-0.3){$c$}
\rput(8,-0.3){$f$}
\rput(3.7,2){$b$}
\rput(3.7,3){$e$}
\rput(5.2,2.3){$h$}
\rput(7.2,2.3){$g$}
}
\end{pspicture}
\end{center}
\caption{}\label{biginterval}
\end{figure}

If $\size ([b,g]) \geq \size([a,f])$, then by using column convexity of $\Pc_r$, we see that $b$ and $c$ are anti-diagonal corners of an inner interval of $\Pc_r$, for example, see the Figure~\ref{smallinterval}. Hence we have $\alpha(b) \alpha(c) < 0$.

\begin{figure}[hbt]
\begin{center}
\psset{unit=0.9cm}
\begin{pspicture}(4.5,-0.5)(4.5,3)
\rput(-1.5,0){
\pspolygon[style=fyp,fillcolor=light](4,0)(4,1)(5,1)(5,0)
\pspolygon[style=fyp,fillcolor=light](4,1)(4,2)(5,2)(5,1)
\pspolygon[style=fyp,fillcolor=light](4,2)(4,3)(5,3)(5,2)
\pspolygon[style=fyp,fillcolor=light](5,0)(5,1)(6,1)(6,0)
\pspolygon[style=fyp,fillcolor=light](5,1)(5,2)(6,2)(6,1)
\pspolygon[style=fyp,fillcolor=light](6,0)(6,1)(7,1)(7,0)
\pspolygon[style=fyp,fillcolor=light](6,1)(6,2)(7,2)(7,1)
\pspolygon[style=fyp,fillcolor=light](7,1)(7,2)(8,2)(8,1)
\rput(4,0){$\bullet$}
\rput(6,0){$\bullet$}
\rput(7,0){$\bullet$}
\rput(4,2){$\bullet$}
\rput(4,3){$\bullet$}
\rput(8,2){$\bullet$}

\rput(3.7,-0.3){$a$}
\rput(6,-0.3){$c$}
\rput(7,-0.3){$f$}
\rput(3.7,2){$b$}
\rput(3.7,3){$e$}
\rput(8.2,2.3){$g$}
}
\end{pspicture}
\end{center}
\caption{}\label{smallinterval}
\end{figure}

In both cases we obtain $\alpha_1$ from $\alpha$ by a single move such that $\alpha_1 (a) = \alpha(a)-1 $. Then, by repeating the same argument as before we obtain a finite sequence of $\alpha=\alpha_0, \alpha_1, \ldots, \alpha_r$, such that $\alpha(a)$ reduces to 0. It shows that the case when $\alpha(a) > 0$ can be reduced to the case when $\alpha(a)=0$. Hence we obtain the desired result.
\end{proof}

\section{Stack polyominoes}
\label{stackpolyomino}

Let $\Pc = \bigcup_{i=1}^{r} [A_i, B_i]$ be a convex collection of cells, where each $[A_i, B_i]$ is a vertical cell interval. Let $a_i$ be the lower left corner of $A_i$ for $i=1, \ldots, r$. Then $\Pc$ is called {\em stack polyomino} if $[a_1, a_r]$ is a horizontal interval of $\Pc$ with $a_i=a_1+(i-1,j)$ for $i=1,\ldots,r$ and some $j$.  In other words, a collection of cells $\Pc$ is called {\em stack polyomino} if it is a row convex bargraph, see Figure~\ref{stack}. The maximal horizontal interval of $\Pc$ containing $a_1$ is called {\em bottom interval} of $\Pc$, and denoted by $\Bc_{\Pc}$.
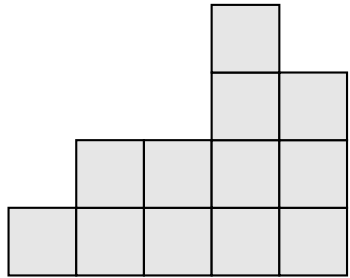
\begin{figure}[hbt]
\begin{center}
\psset{unit=0.9cm}
\begin{pspicture}(4.5,-0.5)(4.5,4)
\rput(-1,0){
\pspolygon[style=fyp,fillcolor=light](3,0)(3,1)(4,1)(4,0)
\pspolygon[style=fyp,fillcolor=light](4,0)(4,1)(5,1)(5,0)
\pspolygon[style=fyp,fillcolor=light](5,0)(5,1)(6,1)(6,0)
\pspolygon[style=fyp,fillcolor=light](6,0)(6,1)(7,1)(7,0)
\pspolygon[style=fyp,fillcolor=light](7,0)(7,1)(8,1)(8,0)
\pspolygon[style=fyp,fillcolor=light](4,1)(4,2)(5,2)(5,1)
\pspolygon[style=fyp,fillcolor=light](5,1)(5,2)(6,2)(6,1)
\pspolygon[style=fyp,fillcolor=light](6,1)(6,2)(7,2)(7,1)
\pspolygon[style=fyp,fillcolor=light](7,1)(7,2)(8,2)(8,1)
\pspolygon[style=fyp,fillcolor=light](6,2)(6,3)(7,3)(7,2)
\pspolygon[style=fyp,fillcolor=light](7,2)(7,3)(8,3)(8,2)
\pspolygon[style=fyp,fillcolor=light](6,3)(6,4)(7,4)(7,3)
}
\end{pspicture}
\end{center}
\caption{A stack polyomino}\label{stack}
\end{figure}

We are going to show that the ideal $I_{\Pc}$ of a stack polyomino $\Pc$ has a quadratic Gr\"obner basis. More generally, let $\Pc$ be an arbitrary collection of cells. We define a total order on the variables $x_a$, $a \in V(\Pc)$ as follows: $x_a > x_b$ with $a=(i,j)$ and $b=(k,l)$, if $i>k$, or $i=k$ and $j>l$. Let $<^1_{\lex}$ be the lexicographical order induced by this order of the variables. Similarly, we denote by $<^2_{\lex}$ the lexicographical order induced by the total order of the variables defined as follows: $x_a > x_b$ with $a=(i,j)$ and $b=(k,l)$, if $i<k$, or $i=k$ and $j>l$. Then we have the following result.

\begin{Theorem}
\label{quadratic}
Let $\Pc$ be a collection of cells. Then the set of inner 2-minors of $\Pc$ form a reduced (quadratic) Gr\"obner basis with respect to $<^1_{\lex}$ if and only if for any two inner intervals $[a,b]$ and $[b,c]$ of $\Pc$, either $[e,c]$ or $[d,c]$ is an inner interval of $\Pc$, where $d$ and $e$ are the anti-diagonal corners of $[a,b]$, see Figure~\ref{quadraticgbasis}.

\begin{figure}[h]
\begin{center}
\psset{unit=0.6cm}
\begin{pspicture}(1,-2)(5,3)
\rput(-1,0.5){
\psline[linestyle=dashed](2,3)(4,3)
\psline[linestyle=dashed](2,1)(2,3)
\psline[linestyle=dashed](4,-1)(6,-1)
\psline[linestyle=dashed](6,-1)(6,1)
\pspolygon[style=fyp,fillcolor=light](2,-1)(4,-1)(4,1)(2,1)
\pspolygon[style=fyp,fillcolor=light](4,1)(6,1)(6,3)(4,3)
\rput(6,3){$\bullet$}
\put(6.2,3){$c$}
\rput(2,1){$\bullet$}
\put(1.6,1.1){$d$}
\rput(4,1){$\bullet$}
\put(3.6,1.1){$b$}
\rput(2,-1){$\bullet$}
\put(1.6,-1.4){$a$}
\rput(4,-1){$\bullet$}
\put(4,-1.4){$e$}
}
\end{pspicture}
\end{center}
\caption{}\label{quadraticgbasis}
\end{figure}

\end{Theorem}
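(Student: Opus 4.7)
My plan is to apply Buchberger's criterion. The first step is to identify the leading term of $f_{a,b}$ under $<^1_{\lex}$. Ordering the four corners of an inner interval $[a,b]$ with $a=(i,j)$, $b=(k,l)$ and anti-diagonal corners $c=(k,j)$, $d=(i,l)$ by the variable rule gives $x_b > x_c > x_d > x_a$, so comparing $x_ax_b$ with $x_cx_d$ by largest variable yields $\ini_{<^1_{\lex}}(f_{a,b}) = x_ax_b$, the diagonal product. Reducedness is automatic once the Gr\"obner basis property is established: the anti-diagonal corners of any inner interval are incomparable in $\NN^2$, so they cannot serve as the diagonal pair of another inner interval, ruling out the forbidden divisibilities.

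Two distinct leading monomials $x_{a_1}x_{b_1}$ and $x_{a_2}x_{b_2}$ can share a variable in essentially two ways: (A) the two intervals share a diagonal corner ($a_1=a_2$ or $b_1=b_2$), or (B) they are stacked as $[a,b]$ and $[b,c]$, the staircase configuration of the theorem. I would dispose of configuration (A) first: in this case all auxiliary rectangles arising in the S-polynomial are sub-rectangles of one of the two given inner intervals, so a double reduction using two such sub-minors always yields zero, with no extra hypothesis needed. A short case analysis is required depending on whether the two non-shared diagonal corners are comparable or not, but each sub-case is handled uniformly.

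For the staircase case (B), which is the heart of the argument, I would compute
\[
S(f_{a,b},f_{b,c}) = x_a x_{d'} x_{e'} - x_c x_d x_e,
\]
where $d=(i,l)$, $e=(k,j)$, and $d'=(k,l')$, $e'=(k',l)$ with $c=(k',l')$. The critical equivalences, which follow by a direct union-of-cells argument using that $[a,b]$ and $[b,c]$ are inner, are
\[
[d,c]\ \text{inner} \iff [a,d']\ \text{inner}, \qquad [e,c]\ \text{inner} \iff [a,e']\ \text{inner}.
\]
Assuming $[d,c]$ is inner, reducing $S$ by $f_{d,c}$ (leading term $x_dx_c$) yields $x_{e'}(x_ax_{d'} - x_ex_{d_1})$ with $d_1=(i,l')$, and this is killed by $f_{a,d'}$ (leading term $x_ax_{d'}$). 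The case where $[e,c]$ is inner is symmetric, using $f_{e,c}$ and $f_{a,e'}$.

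For the ``only if'' direction, suppose the condition fails for some staircase $[a,b], [b,c]$ and examine $S$ directly. Among the three variable pairs in each of its monomials, the pairs $\{d,e\}$ and $\{d',e'\}$ are incomparable in $\NN^2$, hence not the diagonal of any proper interval; every remaining pair corresponds to one of $[d,c], [e,c], [a,d'], [a,e']$, all of which are non-inner by the failed hypothesis together with the equivalences above. Thus no leading term of any inner 2-minor divides either monomial of $S$, so $S$ is already in normal form and nonzero, contradicting the Gr\"obner basis property. The main technical obstacle I anticipate is giving a clean uniform treatment of configuration (A), particularly when the non-shared corners are incomparable; apart from that, the arguments are essentially computational.
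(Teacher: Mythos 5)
Your proof is correct and follows essentially the same approach as the paper: identify $\ini_{<^1_{\lex}}(f_{a,b})=x_ax_b$, check reducedness via incomparability of anti-diagonal corners, run Buchberger's criterion, split into the shared-diagonal-corner case (paper's $a=r$ or $b=s$) where the S-polynomial reduces using sub-rectangles of the given intervals with no extra hypothesis, and the staircase case (paper's $a=s$) which is exactly where the condition on $[e,c]$, $[d,c]$ enters. The only difference in emphasis is that you spell out the staircase reduction and the equivalences $[d,c]$ inner $\iff$ $[a,d']$ inner more explicitly than the paper (which merely asserts the iff), while being terser than the paper on the shared-corner case, but the overall decomposition and reasoning are the same.
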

\begin{proof}
Let $\Pc$ be a collection of cells and $\Mc$ be the set of inner 2-minors of $I_{\Pc}$. For any binomial, we always write the leading term as the first term. The set $\Mc$ forms a reduced Gr\"obner basis of $I_{\Pc}$ with respect to $<^1_{\lex}$ if and only if all $S$-polynomials of inner 2-minors of $I_{\Pc}$ reduce to $0$. Take $f_{a,b},f_{r,s} \in \Mc$ given by $f_{a,b} = x_b x_a - x_c x_d$ and $f_{r,s}= x_s x_r - x_p x_q$, where $c$, $d$ are anti-diagonal corners of $[a,b]$, and $p$, $q$ are anti-diagonal corners of $[r,s]$, as shown in Figure~\ref{twocells}.

\begin{figure}[hbt]
\begin{center}
\psset{unit=0.6cm}
\begin{pspicture}(4.5,-1)(4.5,2)
\rput(-3,0){
\pspolygon[style=fyp,fillcolor=light](3,-0.5)(3,1.5)(6,1.5)(6,-0.5)
\rput(2.7,1.5){c}
\rput(2.7,-0.5){a}
\rput(6.3,1.5){b}
\rput(6.3,-0.5){d}
}
\rput(3,0){
\pspolygon[style=fyp,fillcolor=light](3.5,-1)(3.5,2)(5.5,2)(5.5,-1)
\rput(3.4,2.2){p}
\rput(3.4,-1.3){r}
\rput(5.6,2.2){s}
\rput(5.6,-1.3){q}
}
\end{pspicture}
\end{center}
\caption{}\label{twocells}
\end{figure}

We consider the non-trivial case when $\gcd(\ini_{<}(f_{a,b}),\ini_{<}(f_{r,s})) \neq 1$. We may have one of the following possibilities :
(i) $a=r$,
(ii) $b=s$,
(iii) $a=s$ ( or $b=r$).

Consider the case when $a=r$. Without loss of generality, we may assume that $x_b > x_s$. Then $f_{a,s}= x_s x_a - x_p x_q$ and $S(f_{a,b}, f_{a,s}) = x_b x_p x_q - x_s x_c x_d$. Also we may assume that $p \neq c$ and $q \neq d$, otherwise $S(f_{a,b}, f_{a,s})$ reduces to 0 trivially. We have two possible situation, as shown in Figure~\ref{pinsaddle}.

\begin{figure}[hbt]
\begin{center}
\psset{unit=0.6cm}
\begin{pspicture}(5.5,-1.5)(5.5,3.5)
\rput(3,0){
\pspolygon[style=fyp,fillcolor=light](-3,0)(-3,1.2)(-1.5,1.2)(-1.5,0)
\pspolygon[style=fyp,fillcolor=light](-1.5,0)(-1.5,1.2)(0.5,1.2)(0.5,0)
\pspolygon[style=fyp,fillcolor=light](-3,1.2)(-3,3)(0.5,3)(0.5,1.2)
\pspolygon[style=fyp,fillcolor=light](-1.5,1.2)(-1.5,3)(0.5,3)(0.5,1.2)
\rput(-3.3,-0.2){$a$}
\rput(-1.5,-0.3){$q$}
\rput(0.8,-0.2){$d$}
\rput(-3.3,1.4){$p$}
\rput(-1.3,1.4){$s$}
\rput(0.8,1.4){$h$}
\rput(-3.3,3){$c$}
\rput(0.8,3){$b$}
\rput(-1.5, -1.5){$s < b$}
}
\pspolygon[style=fyp,fillcolor=light](9,0)(9,1.2)(10.5,1.2)(10.5,0)
\pspolygon[style=fyp,fillcolor=light](10.5,0)(10.5,1.2)(12.5,1.2)(12.5,0)
\pspolygon[style=fyp,fillcolor=light](9,1.2)(9,3)(10.5,3)(10.5,1.2)
\rput(10.5, -1.5){$s \nless b$}
\rput(12,0){
\rput(-3.3,-0.2){$a$}
\rput(-1.2,-0.3){$d$}
\rput(0.8,-0.2){$q$}
\rput(-3.3,1.3){$p$}
\rput(-1.2,1.6){$h$}
\rput(0.7,1.3){$s$}
\rput(-3.3,3){$c$}
\rput(-1.2,3){$b$}
}
\end{pspicture}
\end{center}
\caption{}\label{pinsaddle}
\end{figure}

When $s<b$, we have
\[
S(f_{a,b}, f_{a,s})= x_q (x_b x_p - x_c x_h ) + x_c (x_h x_q - x_s x_d)
\]

When $s \nleq b$, we have
\[
S(f_{a,b}, f_{a,s})= x_q (x_b x_p - x_c x_h ) - x_c (x_s x_d - x_h x_q)
\]

It shows that in both situations $S(f_{a,b}, f_{a,s})$ reduces to 0 with respect to the inner $2$-minors $f_{p,b}$ and $f_{q,h}$ (or $f_{d,s}$) of $\Pc$, where $h \in [b,d]$ as shown in Figure~\ref{pinsaddle}. Similarly, one shows that $S(f_{a,b}, f_{r,s})$ reduces to 0 when $b=s$.


Now we discuss the only critical case when $a=s$, see Figure~\ref{critical}.

\begin{figure}[hbt]
\begin{center}
\psset{unit=0.8cm}
\begin{pspicture}(4.5,-0.5)(4.5,2.5)
\rput(-2.5,0)
{
\pspolygon[style=fyp,fillcolor=light](5.5,-0.75)(5.5,1)(7,1)(7,-0.75)
\pspolygon[style=fyp,fillcolor=light](7,1)(7,2.25)(9,2.25)(9,1)
\rput(6.8,2.3){c}
\rput(9.2,2.35){b}
\rput(5.3,1){p}
\rput(6.8,1.2){a}
\rput(9.2,1){d}
\rput(5.3,-0.7){r}
\rput(7.2,-0.7){q}
}
\end{pspicture}
\end{center}
\caption{}\label{critical}
\end{figure}

Then $S(f_{a,b}, f_{r,a}) = x_b x_p x_q - x_c x_d x_r$ reduces to 0 if and only if either $[q,b]$ or $[p,b]$ is an inner interval of $\Pc$. This completes the proof.
\end{proof}

\begin{Remark}{\em
\label{order2}
Similarly one can prove the following statement: Let $\Pc$ be a collection of cells. Then the set of inner 2-minors of $\Pc$ form a reduced (quadratic) Gr\"obner basis with respect to $<^2_{\lex}$ if and only if for any two inner intervals $[b,a]$ and $[d,c]$ of $\Pc$ with anti-diagonal corners $e,f$ and $f,g$ as shown in Figure~\ref{quadraticgbasis}, either $b,g$ or $e,c$ are anti-diagonal corners of an inner interval of $\Pc$.}

\begin{figure}[h]
\begin{center}
\psset{unit=0.6cm}
\begin{pspicture}(1,-2)(5,3)
\rput(-1,0.5){
\psline[linestyle=dashed](2,1)(2,-1)
\psline[linestyle=dashed](2,-1)(4,-1)
\psline[linestyle=dashed](6,1)(6,3)
\psline[linestyle=dashed](6,3)(4,3)
\pspolygon[style=fyp,fillcolor=light](4,1)(6,1)(6,-1)(4,-1)
\pspolygon[style=fyp,fillcolor=light](2,1)(2,3)(4,3)(4,1)
\rput(2,3){$\bullet$}
\put(1.5,3.2){$e$}
\rput(4,3){$\bullet$}
\put(4,3.2){$a$}
\rput(2,1){$\bullet$}
\put(1.5,1){$b$}
\rput(4,1){$\bullet$}
\put(3.5,1.3){$f$}
\rput(6,1){$\bullet$}
\put(6.2,1.1){$c$}
\rput(4,-1){$\bullet$}
\put(4,-1.7){$d$}
\rput(6,-1){$\bullet$}
\put(6,-1.5){$g$}
}
\end{pspicture}
\end{center}
\caption{}\label{quadraticgbasis}
\end{figure}

\end{Remark}

\begin{Corollary}
\label{gbasisstcak}
Let $\Pc$ be a stack polyomino. Then the reduced Gr\"obner basis of $I_{\Pc}$ with respect to both monomial orders $<^1_{\lex}$ and $<^2_{\lex}$ consists of all inner 2-minors of $\Pc$.
\end{Corollary}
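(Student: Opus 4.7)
The plan is to derive the corollary as a direct consequence of Theorem~\ref{quadratic} and Remark~\ref{order2}, by checking that both of the respective combinatorial conditions are automatically satisfied for a stack polyomino. The key structural property I will exploit is the following consequence of the definition: since every column $[A_i, B_i]$ of a stack polyomino $\Pc$ has its lower left corner $a_i$ lying on the common bottom interval $\Bc_{\Pc}$, each column of $\Pc$ consists of all cells from the baseline row (common to all columns) up to the top of that column. In particular, if a column contains a cell at some row $r$, it contains all cells of that column between the baseline and row $r$.

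For the order $<^1_{\lex}$, I need to verify the hypothesis of Theorem~\ref{quadratic}: given inner intervals $[a,b]$ and $[b,c]$ of $\Pc$, with $a=(i_1,j_1)$, $b=(i_2,j_2)$, $c=(i_3,j_3)$ and anti-diagonal corners $d=(i_1,j_2)$, $e=(i_2,j_1)$ of $[a,b]$, I will show that $[e,c]$ is always an inner interval. Because $[a,b]$ is inner, column $i_1$ contains a cell at row $j_1$, so the baseline of $\Pc$ lies at or below row $j_1$. Because $[b,c]$ is inner, each column $i \in \{i_2,\dots,i_3-1\}$ contains cells up to row $j_3-1$; by the structural property these columns also contain all cells down to the baseline, and hence all cells at rows $j_1,\dots,j_3-1$. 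This shows that every cell of $[e,c]$ lies in $\Pc$, verifying the condition.

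For the order $<^2_{\lex}$, I will analogously verify the condition of Remark~\ref{order2}: given inner intervals $[b,a]$ and $[d,c]$ sharing the anti-diagonal corner $f$, I will show that $b$ and $g$ are anti-diagonal corners of an inner interval of $\Pc$. Writing $b=(i_1,j_2)$, $a=(i_2,j_3)$, $d=(i_2,j_1)$, $c=(i_3,j_2)$, $e=(i_1,j_3)$, $f=(i_2,j_2)$, $g=(i_3,j_1)$, the interval in question is $[(i_1,j_1),(i_3,j_2)]$. Because $[d,c]$ is inner, the columns $i_2,\dots,i_3-1$ already contain all cells at rows $j_1,\dots,j_2-1$, and the baseline of $\Pc$ lies at or below $j_1$. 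Because $[b,a]$ is inner, each column $i \in \{i_1,\dots,i_2-1\}$ contains a cell at row $j_2$, hence by the structural property all cells in that column from the baseline up to row $j_2$, so in particular at rows $j_1,\dots,j_2-1$. Combining these gives that the whole interval $[(i_1,j_1),(i_3,j_2)]$ is inner, verifying the condition.

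Once these two verifications are in place, Theorem~\ref{quadratic} and Remark~\ref{order2} deliver the claim: the inner $2$-minors of $\Pc$ form a reduced Gr\"obner basis with respect to both $<^1_{\lex}$ and $<^2_{\lex}$. I do not expect a genuine obstacle here; the only point that requires care is to extract and use the correct extension of a given inner interval down to the baseline, which is exactly what the stack polyomino hypothesis guarantees.
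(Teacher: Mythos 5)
Your proof is correct and follows essentially the same route as the paper: apply Theorem~\ref{quadratic} and Remark~\ref{order2}, and verify the required inner-interval conditions from the defining property of a stack polyomino (every column reaches down to the common bottom interval). You simply spell out in explicit coordinates what the paper compresses into ``it follows from the definition of the stack polyomino that $[e,c]$ is an inner interval.''
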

\begin{proof}
Let $a<b<c$ in $V(\Pc)$ such that $f_{a,b}$ and $f_{b,c}$ are inner 2-minors of $\Pc$. Let $d$ and $e$ be the anti-diagonal corners of the interval $[a,b]$, and $f$ and $g$ be anti-diagonal corners of the interval $[b,c]$. We may assume that $[d,g]$ is a horizontal interval and $[e,f]$ is a vertical interval. It follows from the definition of the stack polyomino that $[e,c]$ is an inner interval of $\Pc$. By applying Theorem~\ref{quadratic}, we obtain that the reduced Gr\"obner basis of $I_{\Pc}$ with respect to $<^1_{\lex}$ consists of all inner 2-minors of $I_{\Pc}$. Similarly one can derive the same conclusion for $<^2_{\lex}$ by applying Theorem~\ref{order2}.
\end{proof}

Let $\Pc$ be a stack polyomino. Now we define a special total order on the variables $x_a$, $a\in V(\Pc)$. Let $[c,d]$ be a vertical interval of maximal size in $\Pc$ and $c=(i,j)$. For any $a,b \in V(\Pc)$ with $a=(k,l)$, $b=(p,q)$, we let $x_a > x_b$ if either (1) $l>q$ , or (2) $l=q$, $k \geq i$, and  $k < p$ or $p < i$, or (3) $l=q$, $k < i$, and $p < k$.

We denote by $<'_{lex}$, the lexicographical term order induced by above order of variables.

\begin{Example}{\em
Let $\Pc$ be the stack polyomino as shown in Figure~\ref{pol}.

\begin{figure}[hbt]
\begin{center}
\psset{unit=0.9cm}
\begin{pspicture}(4.5,-2)(4.5,3)
\rput(-1,0){
\pspolygon[style=fyp,fillcolor=light](3,0)(3,1)(4,1)(4,0)
\pspolygon[style=fyp,fillcolor=light](4,0)(4,1)(5,1)(5,0)
\pspolygon[style=fyp,fillcolor=light](5,0)(5,1)(6,1)(6,0)
\pspolygon[style=fyp,fillcolor=light](6,0)(6,1)(7,1)(7,0)
\pspolygon[style=fyp,fillcolor=light](7,0)(7,1)(8,1)(8,0)
\pspolygon[style=fyp,fillcolor=light](4,1)(4,2)(5,2)(5,1)
\pspolygon[style=fyp,fillcolor=light](5,1)(5,2)(6,2)(6,1)
\pspolygon[style=fyp,fillcolor=light](6,1)(6,2)(7,2)(7,1)
\pspolygon[style=fyp,fillcolor=light](7,1)(7,2)(8,2)(8,1)
\pspolygon[style=fyp,fillcolor=light](6,2)(6,3)(7,3)(7,2)
\pspolygon[style=fyp,fillcolor=light](7,2)(7,3)(8,3)(8,2)
\pspolygon[style=fyp,fillcolor=light](6,3)(6,4)(7,4)(7,3)
\rput(3,1){$\bullet$}
\put(2.7,0.7){$p$}
\rput(4,1){$\bullet$}
\put(3.7,0.7){$q$}
\rput(5,1){$\bullet$}
\put(4.7,0.7){$r$}
\rput(6,1){$\bullet$}
\put(5.7,0.7){$s$}
\rput(7,1){$\bullet$}
\put(6.7,0.7){$t$}
\rput(8,1){$\bullet$}
\put(7.7,0.7){$u$}
\rput(6,0){$\bullet$}
\put(6,-0.3){$c$}
\rput(6,4){$\bullet$}
\put(6,4.3){$d$}
}
\end{pspicture}
\end{center}
\caption{}\label{pol}
\end{figure}

Then for the horizontal interval indicated by fat dot marks, the order of the variables is given as $s > t > u > r > q > p$.
}
\end{Example}

\begin{Remark}{\em
Let $\Pc$ be a stack polyomino and $[c,d]$ be a vertical interval of maximal size in $\Pc$ with $c=(i,j)$. Take $f_{g,h} = x_h x_g - x_p x_q$ be an inner 2-minor of $\Pc$ and $g=(r,s)$. Then we have the following
\begin{enumerate}
\item[(1)] $\ini_< (f_{g,h}) = x_h x_g$ if $r < i$, see Figure~\ref{initial1}.
\begin{figure}[hbt]
\begin{center}
\psset{unit=0.9cm}
\begin{pspicture}(4.5,-0.5)(4.5,3.5)
\rput(-7,0){
\pspolygon[style=fyp,fillcolor=light](4,0)(4,1)(5,1)(5,0)
\pspolygon[style=fyp,fillcolor=light](5,0)(5,1)(6,1)(6,0)
\pspolygon[style=fyp,fillcolor=light](6,0)(6,1)(7,1)(7,0)
\pspolygon[style=fyp,fillcolor=light](7,0)(7,1)(8,1)(8,0)
\pspolygon[style=fyp,fillcolor=light](8,0)(8,1)(9,1)(9,0)
\pspolygon[style=fyp,fillcolor=light](5,1)(5,2)(6,2)(6,1)
\pspolygon[style=fyp,fillcolor=light](6,1)(6,2)(7,2)(7,1)
\pspolygon[style=fyp,fillcolor=light](7,1)(7,2)(8,2)(8,1)
\pspolygon[style=fyp,fillcolor=light](6,2)(6,3)(7,3)(7,2)
\rput(4.8,2.2){$p$}
\rput(4.8,1.2){$g$}
\rput(7.2,2.3){$h$}
\rput(7.15,1.3){$q$}
\rput(6,-0.4){$c$}
\rput(5.9,3.3){$d$}
\rput(6,3){$\bullet$}
\rput(5,1){$\bullet$}
\rput(5,2){$\bullet$}
\rput(6,0){$\bullet$}
\rput(7,1){$\bullet$}
\rput(7,2){$\bullet$}
}
\rput(2,0){
\pspolygon[style=fyp,fillcolor=light](4,0)(4,1)(5,1)(5,0)
\pspolygon[style=fyp,fillcolor=light](5,0)(5,1)(6,1)(6,0)
\pspolygon[style=fyp,fillcolor=light](6,0)(6,1)(7,1)(7,0)
\pspolygon[style=fyp,fillcolor=light](7,0)(7,1)(8,1)(8,0)
\pspolygon[style=fyp,fillcolor=light](8,0)(8,1)(9,1)(9,0)
\pspolygon[style=fyp,fillcolor=light](5,1)(5,2)(6,2)(6,1)
\pspolygon[style=fyp,fillcolor=light](6,1)(6,2)(7,2)(7,1)
\pspolygon[style=fyp,fillcolor=light](7,1)(7,2)(8,2)(8,1)
\pspolygon[style=fyp,fillcolor=light](6,2)(6,3)(7,3)(7,2)
\rput(4.8,2.2){$p$}
\rput(4.8,1.2){$g$}
\rput(6.2,2.3){$h$}
\rput(6.15,1.3){$q$}
\rput(7,-0.4){$c$}
\rput(6.9,3.3){$d$}
\rput(7,3){$\bullet$}
\rput(5,1){$\bullet$}
\rput(5,2){$\bullet$}
\rput(7,0){$\bullet$}
\rput(6,1){$\bullet$}
\rput(6,2){$\bullet$}
}
\end{pspicture}
\end{center}
\caption{}\label{initial1}
\end{figure}
\item[(2)] $\ini_< (f_{g,h}) = x_p x_q$ if $r \geq i$, see Figure~\ref{initial2}.
\begin{figure}[hbt]
\begin{center}
\psset{unit=0.9cm}
\begin{pspicture}(4.5,-0.5)(4.5,1)
\rput(-2,0){
\pspolygon[style=fyp,fillcolor=light](4,0)(4,1)(5,1)(5,0)
\pspolygon[style=fyp,fillcolor=light](5,0)(5,1)(6,1)(6,0)
\pspolygon[style=fyp,fillcolor=light](6,0)(6,1)(7,1)(7,0)
\pspolygon[style=fyp,fillcolor=light](7,0)(7,1)(8,1)(8,0)
\pspolygon[style=fyp,fillcolor=light](8,0)(8,1)(9,1)(9,0)
\pspolygon[style=fyp,fillcolor=light](5,1)(5,2)(6,2)(6,1)
\pspolygon[style=fyp,fillcolor=light](6,1)(6,2)(7,2)(7,1)
\pspolygon[style=fyp,fillcolor=light](7,1)(7,2)(8,2)(8,1)
\pspolygon[style=fyp,fillcolor=light](6,2)(6,3)(7,3)(7,2)
\rput(6.8,2.2){$p$}
\rput(6.8,1.2){$g$}
\rput(8.2,2.3){$h$}
\rput(8.15,1.3){$q$}
\rput(6,-0.4){$c$}
\rput(5.9,3.3){$d$}
\rput(6,3){$\bullet$}
\rput(8,1){$\bullet$}
\rput(8,2){$\bullet$}
\rput(6,0){$\bullet$}
\rput(7,1){$\bullet$}
\rput(7,2){$\bullet$}
}
\end{pspicture}
\end{center}
\caption{}\label{initial2}
\end{figure}
\end{enumerate} }
\end{Remark}

\begin{Theorem}
\label{grobnerstack}
Let $\Pc$ be a stack polyomino and $[c,d]$ be a vertical interval of maximal size in $\Pc$, as shown in Figure~\ref{maxinterval}.

\begin{figure}[hbt]
\begin{center}
\psset{unit=0.9cm}
\begin{pspicture}(4.5,0)(4.5,3.5)
\rput(-1.5,0){
\pspolygon[style=fyp,fillcolor=light](4,0)(4,1)(5,1)(5,0)
\pspolygon[style=fyp,fillcolor=light](5,0)(5,1)(6,1)(6,0)
\pspolygon[style=fyp,fillcolor=light](6,0)(6,1)(7,1)(7,0)
\pspolygon[style=fyp,fillcolor=light](7,0)(7,1)(8,1)(8,0)
\pspolygon[style=fyp,fillcolor=light](5,1)(5,2)(6,2)(6,1)
\pspolygon[style=fyp,fillcolor=light](6,1)(6,2)(7,2)(7,1)
\pspolygon[style=fyp,fillcolor=light](6,2)(6,3)(7,3)(7,2)
\rput(7,-0.3){$c$}
\rput(7,3.3){$d$}
\rput(7,0){$\bullet$}
\rput(7,1){$\bullet$}
\rput(7,2){$\bullet$}
\rput(7,3){$\bullet$}
}
\end{pspicture}
\end{center}
\caption{}\label{maxinterval}
\end{figure}

Then the ideal $(I_{\Pc}, x_c)$ has a squarefree quadratic Gr\"obner basis with respect to $<'_{\lex}$ introduced before.
\end{Theorem}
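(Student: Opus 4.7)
The plan is to prove the theorem by showing that
\[
\mathcal{G} := \{f_{a,b} : [a,b] \text{ an inner interval of } \Pc\} \cup \{x_c\}
\]
is a Gr\"obner basis of $(I_{\Pc}, x_c)$ with respect to $<'_{\lex}$. Every element of $\mathcal{G}$ has a squarefree initial monomial of degree at most $2$, so this immediately yields the claim.

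First I would describe the initial monomial of each inner 2-minor under $<'_{\lex}$. Writing $a=(r,s)$, $b=(u,v)$, and anti-diagonal corners $p=(u,s)$, $q=(r,v)$, the variable ranking at each row $l$ is $x_{(i,l)} > x_{(i+1,l)} > x_{(i+2,l)} > \cdots > x_{(i-1,l)} > x_{(i-2,l)} > \cdots$, with higher rows taking precedence. From this one reads off
\[
\ini_{<'_{\lex}}(f_{a,b}) =
\begin{cases}
x_p x_q & \text{if } r \geq i,\\
x_a x_b & \text{if } r < i.
\end{cases}
\]
Since $c=(i,j)$ and $j$ is the minimum second coordinate appearing in $V(\Pc)$, a direct check shows that $x_c$ never divides the initial monomial of any inner 2-minor. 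In particular every S-pair $S(x_c, f_{a,b})$ has coprime initial monomials and reduces to $0$ automatically.

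The main step is to verify that every S-polynomial $S(f_{a,b}, f_{a',b'})$ between two inner 2-minors reduces to $0$ modulo $\mathcal{G}$. A nontrivial S-pair arises only when $[a,b]$ and $[a',b']$ share a corner appearing in both initial monomials. Following the strategy of Theorem~\ref{quadratic}, I would split into cases according to whether each interval has $r\geq i$ or $r<i$, and according to which corner is shared. In each case a direct computation expresses the S-polynomial as a linear combination $\pm x_w f_{g,h} \pm x_{w'} f_{g',h'}$ for certain auxiliary inner 2-minors. The decisive combinatorial input is Lemma~\ref{corners}: since $\Pc$ is convex (a stack polyomino is convex by definition), every proper interval whose four corners lie in $V(\Pc)$ is automatically an inner interval of $\Pc$, so the auxiliary 2-minors produced by the reduction actually belong to $\mathcal{G}$.

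The main technical obstacle is the bookkeeping in the ``mixed'' case, where one interval has $r\geq i$ (anti-diagonal initial) and the other has $r<i$ (diagonal initial). There the S-polynomial is a cubic whose leading term must be cancelled in two steps: first by an anti-diagonal-type auxiliary 2-minor to kill the top-row factor, and then by a diagonal-type 2-minor to dispose of the remaining bottom-row factor. Identifying these two auxiliary intervals and verifying, through Lemma~\ref{corners}, that they are indeed inner intervals of $\Pc$ is the heart of the proof; once this is done, the remaining cases are straightforward analogues. Since every element of $\mathcal{G}$ is either $x_c$ or a quadratic binomial with squarefree initial monomial, the resulting Gr\"obner basis is squarefree quadratic.
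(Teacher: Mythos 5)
Your reformulation of the Gr\"obner basis is equivalent to the paper's: the paper replaces each inner $2$-minor $f_{a,b}$ whose support meets $x_c$ by the degree-$2$ monomial that survives modulo $x_c$, but --- as your own computation of initial terms shows --- $x_c$ never divides the leading term of an inner $2$-minor, so the two generating sets have the same initial monomials and generate the same ideal; one is a Gr\"obner basis if and only if the other is. Your description of $\ini_{<'_{\lex}}(f_{a,b})$ is also correct.

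There is, however, a genuine gap in the heart of the argument. You claim that ``the decisive combinatorial input is Lemma~\ref{corners}: since $\Pc$ is convex \ldots\ every proper interval whose four corners lie in $V(\Pc)$ is automatically an inner interval, so the auxiliary $2$-minors produced by the reduction actually belong to $\mathcal{G}$.'' That statement is true, but it is not where the difficulty lies. In the mixed case (one $S$-pair factor with diagonal initial, the other with anti-diagonal initial), the reduction of the cubic $S$-polynomial cannot be carried out using intervals whose corners all lie among the eight original corners of $[g,h]$ and $[u,t]$; it unavoidably introduces a \emph{new} vertex. Concretely, with $w=h$ the reduction is $S = -x_p f_{z,t} - x_t f_{g,u}$ where $z$ is the point in the bottom row directly below $v$, and with $v=h$ the reduction introduces an analogous point $l$. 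Lemma~\ref{corners} tells you nothing about whether $z$ (or $l$) belongs to $V(\Pc)$; it only converts ``four corners in $V(\Pc)$'' to ``inner interval.'' The fact that $z\in V(\Pc)$ is exactly where the \emph{stack} structure enters: in a stack polyomino every vertex lies in a vertical cell interval that reaches the bottom interval $\Bc_{\Pc}$, so the vertical projection of any vertex of $\Pc$ to the bottom row again lands in $V(\Pc)$. This property fails for general convex collections of cells, and in fact Theorem~\ref{quadratic} (already for the simpler order $<^1_{\lex}$) shows that convexity alone does not give a quadratic Gr\"obner basis. You would need to (i) write out the mixed case and explicitly identify the auxiliary vertices it introduces, and (ii) invoke the downward-projection property of stack polyominoes --- not Lemma~\ref{corners} --- to see that they lie in $V(\Pc)$. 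Without that, the proof does not close.

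A smaller remark: your statement that ``in each case a direct computation expresses the $S$-polynomial as $\pm x_w f_{g,h} \pm x_{w'} f_{g',h'}$'' is asserted rather than shown, and the mixed case, which is precisely the one where this decomposition is non-obvious, is exactly the case you leave as ``bookkeeping.'' That is the step that carries the content of the theorem.
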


\begin{proof}
Let $[a,b]$ and $[c,d]$ be the maximal horizontal and vertical interval of $\Pc$ containing $c$ and set $I=(I_{\Pc}, x_c)$. First observe that $I$ is minimally generated by $x_c$ and the set $\Mc$ consisting of the following elements:
\begin{enumerate}
\item[(1)] Those inner 2-minors $f_{g,h}$ of $\Pc$ such that $x_c \notin \supp{f_{g,h}}$
\item[(2)] The degree $2$ monomials $x_e x_f$ with $e \in [a,b]$, $f \in [c,d]$ and $e$ and $f$ are different from $c$ such that either $[e,f]$ is an inner interval of $\Pc$ or $e$ and $f$ are anti-diagonal corners of an inner interval in $\Pc$.
\end{enumerate}

To show that $\Mc \cup \{x_c\}$ is a reduced Gr\"obner basis of $I$ with respect to $<'_{\lex}$, it is enough to show that all $S$-polynomials $S(\mm,\mm')$ , $\mm,\mm' \in \Mc$ reduce to $0$, because the $S$-polynomial $S(\mm, x_c)$, $\mm \in \Mc$ trivially reduces to 0. Take $\mm, \mm' \in \Mc$ and consider the non-trivial case when $\gcd(\ini_{<}(\mm),\ini_{<}(\mm')) \neq 1$.

If $\mm$ and $\mm'$ are both monomials,  then the $S$-polynomial $S(\mm,\mm')$ reduces to $0$ trivially. Next we consider the case when $\mm$ is an inner 2-minor and $\mm'$ is a monomial in $\Mc$. Let $\mm=f_{g,h} = x_h x_g - x_p x_q$ be an inner 2-minor of $\Pc$ and $\mm'=x_ex_f$ with $e \in [a,b]$, $f \in [c,d]$. Let $c=(i,j)$, $h=(k,l)$, $g=(r,s)$, $p=(r,l)$, and $q=(k,s)$. We have following two possibilities:
\begin{enumerate}
\item[(a)] $\ini_< (f_{g,h}) = x_h x_g$, which gives $r < i$
\item[(b)] $\ini_< (f_{g,h} )= x_p x_q$, which gives $r \geq i$
\end{enumerate}

If $\ini_< f_{g,h} = x_h x_g$,then we either have $x_h = x_f$ or $x_g = x_e$. If $x_h=x_f$, then $q \in [c,d]$ and $x_e x_q \in \Mc$, and hence the $S$-polynomial $S(\mm,\mm')=x_e x_p x_q$ reduces to $0$. If $x_g=x_e$, then $q \in [a,b]$ and $x_f x_q \in \Mc$, and the $S$-polynomial $S(\mm,\mm')=x_f x_p x_q$ reduces to $0$.

If $\ini_< (f_{g,h} )= x_p x_q$, then we either have $x_p = x_f$ or $x_q = x_e$. If $x_p=x_f$, then $g \in [c,d]$ and $x_e x_g \in \Mc$, and hence the $S$-polynomial $S(\mm,\mm')=x_e x_h x_g$ reduces to $0$. If $x_q=x_e$, then $g \in [a,b]$ and $x_f x_g \in \Mc$, and the $S$-polynomial $S(\mm,\mm')=x_f x_h x_g$ reduces to $0$.

Now we consider the case when $\mm$ and $\mm'$ are inner 2-minors of $\Pc$. Let $\mm=f_{g,h}=x_h x_g - x_p x_q$ and $\mm'=f_{u,t} = x_t x_u - x_v x_w$. There are three possibilities:

\begin{enumerate}
\item[(a)] $\ini_< (f_{g,h}) = x_h x_g$ and $\ini_< (f_{u,t}) = x_t x_u$
\item[(b)] $\ini_< (f_{g,h}) = x_p x_q$ and $\ini_< (f_{u,t}) = x_v x_w$
\item[(c)] $\ini_< (f_{g,h}) = x_h x_g$ and $\ini_< (f_{u,t}) = x_v x_w$
\end{enumerate}

If $(a)$ holds, then as we have seen in Theorem~\ref{quadratic}, the only non-trivial case to be discusses is when $t=g$, as shown in Figure~\ref{case1}.

\begin{figure}[h]
\begin{center}
\psset{unit=0.6cm}
\begin{pspicture}(1,-2)(5,3)
\rput(-1,0.5){
\psline[linestyle=dashed](4,-1)(6,-1)
\psline[linestyle=dashed](6,-1)(6,1)
\pspolygon[style=fyp,fillcolor=light](2,-1)(4,-1)(4,1)(2,1)
\pspolygon[style=fyp,fillcolor=light](4,1)(6,1)(6,3)(4,3)
\rput(6,3){$\bullet$}
\put(6.2,3){$h$}
\rput(6,1){$\bullet$}
\put(6.2,1.2){$q$}
\rput(4,3){$\bullet$}
\put(3.6,3.1){$p$}
\rput(2,1){$\bullet$}
\put(1.6,1.2){$v$}
\rput(4,1){$\bullet$}
\put(3.6,1.3){$g$}
\rput(2,-1){$\bullet$}
\put(1.6,-1.5){$u$}
\rput(4,-1){$\bullet$}
\put(3.6,-1.5){$w$}
}
\end{pspicture}
\end{center}
\caption{}\label{case1}
\end{figure}

It follows from the definition of stack polyominoes that $f_{w,h} \in I_{\Pc}$. If $x_c \notin \supp (f_{w,h})$ then $f_{w,h} \in \Mc$ and as in Theorem~\ref{quadratic}, we see that $S$-polynomial $S(f_{g,h}, f_{u,g})$ reduces to 0. If $x_c \in \supp (f_{w,h})$, then $h,q \in [c,d]$ and $u,w \in [a,b]$. It shows that the $S$-polynomial $S(f_{g,h}, f_{u,g})= x_h x_v x_w - x_u x_p x_q$ reduces to $0$ in $I$, because $x_h x_w$ and $x_u x_q$ belong to $\Mc$.

If $(b)$ holds, one can argue in a similar way by applying Remark~\ref{order2}.

Now suppose that $(c)$ holds, then for $c=(i,j)$, $g=(r,s)$ and $u=(m,n)$, we get $r < i \leq m$. In this case we either have $w=h$ or $v=h$.

Let $w=h$, then $S(f_{g,h}, f_{u,t}) = x_v x_p x_q - x_t x_u x_g$. By definition of stack, there exists $z \in V(\Pc)$ such that $f_{z,t} = x_t x_z - x_v x_q \in I_{\Pc}$. If $x_c \notin \supp (f_{z,t})$, then $f_{z,t} \in \Mc$ and $S(f_{g,h}, f_{u,t}) = x_p (x_v x_q - x_t x_z) - x_t (x_u x_g - x_p x_z)$ reduces to 0, see Figure~\ref{exp}.

If $x_c \in \supp (f_{z,t})$, then $x_v x_q$, $x_u x_g \in \Mc$ and again $S(f_{g,h},f_{u,t})$ reduces to 0.

\begin{figure}[hbt]
\begin{center}
\psset{unit=0.9cm}
\begin{pspicture}(4.5,-0.5)(4.5,3.5)
\rput(-2,0){
\pspolygon[style=fyp,fillcolor=light](4,0)(4,1)(5,1)(5,0)
\pspolygon[style=fyp,fillcolor=light](5,0)(5,1)(6,1)(6,0)
\pspolygon[style=fyp,fillcolor=light](6,0)(6,1)(7,1)(7,0)
\pspolygon[style=fyp,fillcolor=light](7,0)(7,1)(8,1)(8,0)
\pspolygon[style=fyp,fillcolor=light](8,0)(8,1)(9,1)(9,0)
\pspolygon[style=fyp,fillcolor=light](5,1)(5,2)(6,2)(6,1)
\pspolygon[style=fyp,fillcolor=light](6,1)(6,2)(7,2)(7,1)
\pspolygon[style=fyp,fillcolor=light](7,1)(7,2)(8,2)(8,1)
\pspolygon[style=fyp,fillcolor=light](6,2)(6,3)(7,3)(7,2)
\rput(6.8,2.2){$v$}
\rput(6.8,1.2){$u$}
\rput(6.8,-0.3){$z$}
\rput(4.8,1.2){$p$}
\rput(4.8,-0.3){$g$}
\rput(8.2,2.3){$t$}
\rput(8.65,1.3){$w=h$}
\rput(8.2,-0.3){$q$}
\rput(6,-0.3){$c$}
\rput(5.9,3.3){$d$}
\rput(6,3){$\bullet$}
\rput(8,1){$\bullet$}
\rput(8,2){$\bullet$}
\rput(6,0){$\bullet$}
\rput(8,0){$\bullet$}
\rput(7,0){$\bullet$}
\rput(5,0){$\bullet$}
\rput(5,1){$\bullet$}
\rput(7,1){$\bullet$}
\rput(7,2){$\bullet$}
}
\end{pspicture}
\end{center}
\caption{}\label{exp}
\end{figure}

Now, let $v=h$. Then $S(f_{g,h}, f_{u,t}) = x_g x_t x_u - x_w x_p x_q$. Again, by definition of stack, there exists $l \in V(\Pc)$ such that $f_{g,t} = x_t x_g - x_l x_p \in I_{\Pc}$, see Figure~\ref{h=v}.
\begin{figure}[hbt]
\begin{center}
\psset{unit=0.9cm}
\begin{pspicture}(4.5,-0.5)(4.5,4.5)
\rput(-1,0){
\pspolygon[style=fyp,fillcolor=light](3,0)(3,1)(4,1)(4,0)
\pspolygon[style=fyp,fillcolor=light](4,0)(4,1)(5,1)(5,0)
\pspolygon[style=fyp,fillcolor=light](5,0)(5,1)(6,1)(6,0)
\pspolygon[style=fyp,fillcolor=light](6,0)(6,1)(7,1)(7,0)
\pspolygon[style=fyp,fillcolor=light](7,0)(7,1)(8,1)(8,0)
\pspolygon[style=fyp,fillcolor=light](4,1)(4,2)(5,2)(5,1)
\pspolygon[style=fyp,fillcolor=light](5,1)(5,2)(6,2)(6,1)
\pspolygon[style=fyp,fillcolor=light](6,1)(6,2)(7,2)(7,1)
\pspolygon[style=fyp,fillcolor=light](7,1)(7,2)(8,2)(8,1)
\pspolygon[style=fyp,fillcolor=light](6,2)(6,3)(7,3)(7,2)
\pspolygon[style=fyp,fillcolor=light](5,2)(5,3)(6,3)(6,2)
\pspolygon[style=fyp,fillcolor=light](7,2)(7,3)(8,3)(8,2)
\pspolygon[style=fyp,fillcolor=light](6,3)(6,4)(7,4)(7,3)
\rput(6.8,3.2){$h$}
\rput(6.8,2.2){$u$}
\rput(6.8,1.2){$q$}
\rput(4.8,1.2){$g$}
\rput(4.8,3.1){$p$}
\rput(8.2,2.3){$w$}
\rput(8.2,1.2){$l$}
\rput(8.2,3.3){$t$}
\rput(6,-0.3){$c$}
\rput(5.9,4.3){$d$}
\rput(7,3){$\bullet$}
\rput(5,3){$\bullet$}
\rput(6,4){$\bullet$}
\rput(8,3){$\bullet$}
\rput(8,2){$\bullet$}
\rput(6,0){$\bullet$}
\rput(8,1){$\bullet$}
\rput(7,0){$\bullet$}
\rput(5,1){$\bullet$}
\rput(7,1){$\bullet$}
\rput(7,2){$\bullet$}
}
\end{pspicture}
\end{center}
\caption{}\label{h=v}
\end{figure}
It is clear that $x_c \notin \supp (f_{g,t})$, and $S(f_{g,h}, f_{u,t}) = x_u (x_g x_t - x_l x_p) + x_p (x_u x_l - x_w x_q)$ reduces to 0.
\end{proof}

\begin{Corollary}
With the notation introduced in Theorem~\ref{grobnerstack}, we have that $(I_P, x_c)$ is a radical ideal.
\end{Corollary}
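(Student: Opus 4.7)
The plan is to derive this as an immediate consequence of Theorem~\ref{grobnerstack}. By that theorem, $(I_{\Pc}, x_c)$ admits a Gr\"obner basis with respect to $<'_{\lex}$ whose initial terms are all squarefree monomials of degree at most $2$. Hence the initial ideal $\ini_{<'_{\lex}}(I_{\Pc}, x_c)$ is a squarefree monomial ideal, and therefore radical.

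Next I invoke the standard fact that if, for some monomial order $<$, the initial ideal $\ini_<(I)$ of an ideal $I\subset S$ is radical, then $I$ itself is radical. One quick way to see this: if $f^n\in I$, then $\ini_<(f)^n = \ini_<(f^n)\in \ini_<(I)$, which being radical forces $\ini_<(f)\in \ini_<(I)$; hence there is $g\in I$ with $\ini_<(g)=\ini_<(f)$, and replacing $f$ by $f-g$ (which still has a power in $I$) one reduces the leading term and iterates, concluding by Noetherian induction that $f\in I$.

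Applying this to $I=(I_{\Pc}, x_c)$ with the order $<'_{\lex}$ yields the claim. No further combinatorial input is needed beyond what Theorem~\ref{grobnerstack} already provides, so there is no substantial obstacle here; the corollary is simply the standard Gr\"obner-theoretic passage from a squarefree initial ideal to radicality of the original ideal.
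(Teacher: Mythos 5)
Your proposal is correct and follows essentially the same route as the paper: invoke Theorem~\ref{grobnerstack} to get a squarefree initial ideal for $(I_{\Pc},x_c)$, then pass to radicality of the ideal itself. The only difference is that the paper cites \cite[Proof of Cor.~2.2]{HHH} for the standard fact that a squarefree initial ideal forces the ideal to be radical, whereas you supply the (correct) reduction argument yourself.
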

\begin{proof}
It is a known fact, see for example \cite[Proof of Cor. 2.2]{HHH}, that an ideal is reduced if it has a squarefree initial ideal. Hence the assertion follows from Theorem~\ref{grobnerstack}.
\end{proof}

Next, we are going to determine the minimal prime ideals of $(I_{\Pc}, x_c)$ where $c$ is chosen as in Theorem~\ref{grobnerstack}. To this end we introduce some notation. For each element $a \in V(\Pc)$ there exists a unique element $\pi(a) \in \Bc_{\Pc}$ such that $a$ and $\pi(a)$ are in vertical position, see Figure~\ref{pi(a)}.

\begin{figure}[hbt]
\begin{center}
\psset{unit=0.9cm}
\begin{pspicture}(4.5,-0.5)(4.5,3)
\rput(-1.5,0){
\pspolygon[style=fyp,fillcolor=light](4,0)(4,1)(5,1)(5,0)
\pspolygon[style=fyp,fillcolor=light](5,0)(5,1)(6,1)(6,0)
\pspolygon[style=fyp,fillcolor=light](6,0)(6,1)(7,1)(7,0)
\pspolygon[style=fyp,fillcolor=light](7,0)(7,1)(8,1)(8,0)
\pspolygon[style=fyp,fillcolor=light](5,1)(5,2)(6,2)(6,1)
\pspolygon[style=fyp,fillcolor=light](6,1)(6,2)(7,2)(7,1)
\pspolygon[style=fyp,fillcolor=light](6,2)(6,3)(7,3)(7,2)
\rput(6,-0.4){$\pi(a)$}
\rput(5.8,2.3){$a$}
\rput(6,0){$\bullet$}
\rput(6,2){$\bullet$}
}
\end{pspicture}
\end{center}
\caption{}\label{pi(a)}
\end{figure}

\begin{Theorem}
\label{notsodifficult}
With the notation and assumptions introduced in Theorem~\ref{grobnerstack}, let $P$ be a prime ideal containing $(I_{\Pc}, x_c)$, and let $[c,e]$ be the maximal subinterval of $[c,d]$ with the property that $x_f \in P$ for all $f \in [c,e]$. Assume that $e \neq d$. Let $[g,h]$ be the smallest horizontal interval of $\Pc$ with $e \in [g,h]$ and $g,h \in \partial \Pc$. Then $Q_e=(I_{\Pc}, \{x_p \: p \in [\pi(g), h]\})$ is a prime ideal with $(I_{\Pc}, x_c) \subset Q_e \subset P$.
\end{Theorem}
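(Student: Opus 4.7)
The proof divides into two parts: showing $Q_e \subseteq P$ and proving $Q_e$ is prime.

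For the first part, I use that $K[\Pc]$ is a domain (by Theorem~\ref{graphprime}, since stack polyominoes are convex and weakly connected), giving an embedding $K[\Pc] \hookrightarrow K[s_1,\ldots,s_m,t_1,\ldots,t_n]$ via $x_{(i,j)} \mapsto s_i t_j$. Choose an irreducible component $\tilde Y \subset \Spec K[s,t]$ of the preimage of $V(P)$ whose image is dense in $V(P)$. Since $x_c = s_{i_c} t_{j_0} \in P$, the function $s_{i_c} t_{j_0}$ vanishes on $\tilde Y$, and irreducibility forces $s_{i_c} \equiv 0$ or $t_{j_0} \equiv 0$ on $\tilde Y$. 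The hypothesis $x_{e'} = s_{i_c} t_{l+1} \notin P$ rules out the first alternative (here $e' = e + (0,1)$), so $t_{j_0} \equiv 0$ on $\tilde Y$. Iterating this argument using $x_{(i_c, k)} \in P$ for $k \in [j_0+1, l]$ yields $t_k \equiv 0$ on $\tilde Y$ throughout that range. Hence $x_{(i,j)} = s_i t_j$ vanishes on $\tilde Y$, and therefore lies in $P$, for every $(i,j) \in V(\Pc)$ with $j \leq l$; in particular $x_p \in P$ for all $p \in [\pi(g), h]$, proving $Q_e \subseteq P$.

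For the primality of $Q_e$, set $V' = V(\Pc) \setminus [\pi(g), h]$ and consider the $K$-algebra homomorphism $\Psi \colon S \to K[s,t]$ defined by $\Psi(x_{(i,j)}) = s_i t_j$ when $(i,j) \in V'$, and $\Psi(x_{(i,j)}) = 0$ otherwise. Its image $T = K[s_i t_j : (i,j) \in V']$ is a toric subring of $K[s,t]$, hence an integral domain. The goal is to establish $\ker \Psi = Q_e$, which yields $S/Q_e \cong T$ and thus $Q_e$ is prime. The inclusion $Q_e \subseteq \ker \Psi$ reduces to checking that each inner $2$-minor $f_{a,b} = x_a x_b - x_c x_d$ lies in $\ker \Psi$; a case analysis on how many of the four corners $a, b, c, d$ lie in $[\pi(g), h]$ reveals that the only problematic configuration is the one in which exactly one corner lies in the rectangle, and this case is excluded by the defining property of $[g,h]$. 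The reverse inclusion $\ker \Psi \subseteq Q_e$ is obtained via a Gr\"obner basis reduction based on the quadratic Gr\"obner basis of $I_\Pc$ from Corollary~\ref{gbasisstcak}: any element of $\ker \Psi$ reduces modulo $Q_e$ to a polynomial supported on monomials in the variables of $V'$, which then maps injectively into $T$ through $\Psi$ and must therefore be zero.

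The main technical obstacle lies in the well-definedness check for $\Psi$, namely ruling out inner intervals of $\Pc$ with exactly one corner in $[\pi(g), h]$. This is precisely where the minimality condition on $[g,h]$ is used essentially: it ensures that the column range $[i_g, i_h]$ coincides with the set of columns of $\Pc$ reaching row $l + 1$, so that any vertex of $V(\Pc)$ strictly above row $l$ has its first coordinate inside $[i_g, i_h]$, forcing any candidate ``fourth corner'' back into the rectangle. Establishing this structural claim rigorously requires combining the row- and column-convexity of the stack polyomino with the characterization of $g$ and $h$ as the boundary vertices at row $l$ nearest to $e$.
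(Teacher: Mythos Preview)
Your argument for the inclusion $Q_e \subseteq P$ contains a genuine error. You assert the existence of an irreducible component $\tilde Y$ of the preimage of $V(P)$ whose image is dense in $V(P)$; equivalently, that lying-over holds for the inclusion $K[\Pc] \hookrightarrow K[s,t]$ at the prime $\bar P = P/I_{\Pc}$. This fails precisely for the primes the theorem is describing. Take for instance $P = Q_e$ itself (once its primality is established): any prime $\tilde P \subset K[s,t]$ containing $s_{i_c} t_{j_0}$ must contain $s_{i_c}$ or $t_{j_0}$. The first option forces $x_{e'} = s_{i_c} t_{l+1} \in \tilde P \cap K[\Pc]$, contradicting $x_{e'} \notin \bar Q_e$; the second forces $x_{(i,j_0)} \in \tilde P \cap K[\Pc]$ for every $i$, but the bottom-row vertices outside the rectangle $[\pi(g),h]$ (the ``wings'') yield variables not in $\bar Q_e$. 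Hence no such $\tilde Y$ exists. Indeed, your argument, were it valid, would prove the stronger statement that every $x_{(i,j)}$ with $j \leq l$ lies in $P$, and this is false for $P = Q_e$ whenever the stack has wings at height $l$. The paper argues directly and elementarily from the $2$-minor relations: for $p \in [\pi(g),h] \setminus [c,e]$ the smallest interval with corners $e'$ and $p$ is inner in $\Pc$ and one of its other corners lies in $[c,e]$, so $x_{e'} x_p \equiv x_q x_r \equiv 0 \pmod P$ with $x_{e'} \notin P$ gives $x_p \in P$.

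Your approach to primality via the homomorphism $\Psi$ also has a gap, in the direction $\ker \Psi \subseteq Q_e$. The restriction of $\Psi$ to $K[x_a : a \in V']$ is \emph{not} injective: its kernel is the full toric ideal of the bipartite graph on $V'$, which is nonzero whenever $V'$ contains a $4$-cycle (for instance two wing columns meeting two common rows). To complete the argument you would need to show that this toric ideal lies in $Q_e$, i.e.\ that every cycle binomial supported on $V'$ is a combination of inner $2$-minors of $\Pc$ together with the killed variables, and this is essentially what the paper proves. The paper's route is to observe that, modulo the variables $x_p$ with $p\in[\pi(g),h]$, the ideal $I_{\Pc}$ becomes a sum $J_1 + J_2$ in disjoint sets of variables, where $J_1$ is the inner-minor ideal of the stack polyomino above row $l$ and $J_2$ is (after a horizontal shift gluing the two wings) the inner-minor ideal of another stack polyomino; since each factor ring is already known to be a domain, so is their tensor product, and hence $S/Q_e$.
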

 Figure~\ref{minimalprime} displays the situation as described in Theorem~\ref{notsodifficult}.

\begin{figure}[hbt]
\begin{center}
\psset{unit=0.9cm}
\begin{pspicture}(4.5,-0.5)(4.5,3.5)
\rput(-2,0){
\pspolygon[style=fyp,fillcolor=light](4,0)(4,1)(5,1)(5,0)
\pspolygon[style=fyp,fillcolor=light](5,0)(5,1)(6,1)(6,0)
\pspolygon[style=fyp,fillcolor=light](6,0)(6,1)(7,1)(7,0)
\pspolygon[style=fyp,fillcolor=light](7,0)(7,1)(8,1)(8,0)
\pspolygon[style=fyp,fillcolor=light](5,1)(5,2)(6,2)(6,1)
\pspolygon[style=fyp,fillcolor=light](6,1)(6,2)(7,2)(7,1)
\pspolygon[style=fyp,fillcolor=light](6,2)(6,3)(7,3)(7,2)
\pspolygon[style=fyp,fillcolor=light](7,1)(7,2)(8,2)(8,1)
\pspolygon[style=fyp,fillcolor=light](7,2)(7,3)(8,3)(8,2)
\pspolygon[style=fyp,fillcolor=light](8,0)(8,1)(9,1)(9,0)
\rput(6,-0.4){$\pi(g)$}
\rput(5.8,2.3){$g$}
\rput(8.2,2.3){$h$}
\rput(8.2,-0.4){$\pi(h)$}
\rput(7.2,2.3){$e$}
\rput(7,-0.4){$c$}
\rput(6.9,3.3){$d$}
\rput(6,0){$\bullet$}
\rput(6,1){$\bullet$}
\rput(6,2){$\bullet$}
\rput(7,0){$\bullet$}
\rput(7,1){$\bullet$}
\rput(7,2){$\bullet$}
\rput(8,0){$\bullet$}
\rput(8,1){$\bullet$}
\rput(8,2){$\bullet$}
}
\end{pspicture}
\end{center}
\caption{}\label{minimalprime}
\end{figure}

\begin{proof}[Proof of Theorem~\ref{notsodifficult}]
We first show the inclusions $(I_{\Pc}, x_c) \subset Q_e \subset P$. Obviously, $(I_{\Pc}, x_c) \subset Q$. Let $p \in [\pi(g), h]$. If $p \in [c,e]$, then $x_p \in P$. Assume now that $ p \notin [c,e]$. Let $e=(i,j)$, and define $e'=(i,j+1)$. Then $x_{e'} \notin P$. We consider the smallest interval $\Ic$ of $V(\Pc)$ containing $e'$ and $p$. Let $e'$, $p$, $q$ and $r$ be the corners of $\Ic$. Then $\Ic$ is an inner interval of $\Pc$ and either $q$ or $r$ belongs to the interval $[c,e]$. Say, $q \in [c,e]$. Then $x_{e'} x_p - x_q x_r \in I_{\Pc} \subset P$ and $x_q \in \Pc$. Therefore, $x_{e'}x_p \in \Pc$. Since $x_{e'} \notin P$ and $P$ is a prime ideal, it follows that $x_p \in P$. It shows $Q_e \subset \Pc$.

It remains to be shown that $Q_e$ is a prime ideal. Observe that $Q_e= (J, \{x_p \: p \in [\pi(g), h]\})$ where $J$ is generated by the minors of the form
\begin{enumerate}
\item[(i)] $f_{a,b} \in I_{\Pc}$ with $a=(k,l)$ and $l>j$.
\item[(ii)] $f_{a,b} \in I_{\Pc}$ with $a=(k,l)$ and $l<j$ and $a,b \notin [\pi(g),h]$
\end{enumerate}

The ideal $J_1$ generated by the minors in (i) is the ideal of inner 2-minors of a stack polyomino $\Pc'$ which consists of cells of $\Pc$ with lower left corner $a=(k,l)$ with $l > j$. Hence $J_1$ is a prime ideal.

Let $V=\{a \in V(\Pc) \: a=(r,s), s \leq j \; \text{and} \; a \notin [\pi(g),h]\}$, and $\pi(g)=(i_1,t)$, $\pi(h)=(i_2,t)$. We define a  map $\alpha\: V \rightarrow V$ given by
\[
\alpha(a)= \left\{ \begin{array}{ll}
          a, &  \text{if  $r < i_1$}, \\
         (r-(i_2-i_1+1),s), &\text{if  $r > i_2$}.
                  \end{array} \right.
\]
With the new co-ordinate assigned by $\alpha$, the ideal $J_2$ generated by the inner  $2$-minors in (ii) may again be identified by the ideal of inner 2-minors of a stack polyomino whose vertex set is contained in $\alpha(V)$. Furthermore, the generator of $J_1$ and $J_2$ have disjoint support. This implies that $(J_1, J_2)$ is a prime ideal. Since $J=(J_1,J_2)$, we conclude that $Q_e$ is a prime ideal.
\end{proof}

A vertex $a \in \partial \Pc$ is called an {\em inside (outside) corner}  of the stack polyomino $\Pc$ if it belongs to exactly three (one) different cells of $\Pc$, see Figure~\ref{insideoutside} in which inside and outside corners are shown by fat dots.

\begin{figure}[hbt]
\begin{center}
\psset{unit=0.9cm}
\begin{pspicture}(4.5,-0.5)(4.5,3)
\rput(-5,0){
\pspolygon[style=fyp,fillcolor=light](4,0)(4,1)(5,1)(5,0)
\pspolygon[style=fyp,fillcolor=light](5,0)(5,1)(6,1)(6,0)
\pspolygon[style=fyp,fillcolor=light](6,0)(6,1)(7,1)(7,0)
\pspolygon[style=fyp,fillcolor=light](7,0)(7,1)(8,1)(8,0)
\pspolygon[style=fyp,fillcolor=light](5,1)(5,2)(6,2)(6,1)
\pspolygon[style=fyp,fillcolor=light](6,1)(6,2)(7,2)(7,1)
\pspolygon[style=fyp,fillcolor=light](6,2)(6,3)(7,3)(7,2)
\rput(5,1){$\bullet$}
\rput(6,2){$\bullet$}
\rput(7,1){$\bullet$}
\rput(6.2,-0.5){Inside corners}
}
\rput(2.5,0){
\pspolygon[style=fyp,fillcolor=light](4,0)(4,1)(5,1)(5,0)
\pspolygon[style=fyp,fillcolor=light](5,0)(5,1)(6,1)(6,0)
\pspolygon[style=fyp,fillcolor=light](6,0)(6,1)(7,1)(7,0)
\pspolygon[style=fyp,fillcolor=light](7,0)(7,1)(8,1)(8,0)
\pspolygon[style=fyp,fillcolor=light](5,1)(5,2)(6,2)(6,1)
\pspolygon[style=fyp,fillcolor=light](6,1)(6,2)(7,2)(7,1)
\pspolygon[style=fyp,fillcolor=light](6,2)(6,3)(7,3)(7,2)
\rput(4,0){$\bullet$}
\rput(4,1){$\bullet$}
\rput(5,2){$\bullet$}
\rput(6,3){$\bullet$}
\rput(7,3){$\bullet$}
\rput(8,0){$\bullet$}
\rput(8,1){$\bullet$}
\rput(6.2, -0.5){Outside corners}
}
\end{pspicture}
\end{center}
\caption{}\label{insideoutside}
\end{figure}

In the situation of Theorem~\ref{notsodifficult}, we define the following prime ideals.
\begin{enumerate}
\item[(1)] $P_1 =( I_{\Pc}, \{x_l \: l \in \Bc_{\Pc} \})$, $P_2=(I_{\Pc}, \{x_l \: l \in [c,d]\})$
\item[(2)] Let $e_1, \ldots, e_s$ be the elements of $[c,d]$ with the property that the maximal horizontal interval of $\Pc$ which contains $e_i$ also contains an inside corner of $\Pc$. For simplicity, we set $Q_i= Q_{e_i}$, where $Q_{e_i}$ is defined as in Theorem~\ref{notsodifficult}.
\end{enumerate}

\begin{Corollary}
\label{minprime}
The minimal prime ideals of $(I_{\Pc}, x_c)$ are $P_1, P_2$ and  $Q_1, Q_2, \ldots, Q_s$.
\end{Corollary}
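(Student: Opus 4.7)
The strategy exploits the preceding corollary: since $(I_\Pc, x_c)$ is radical, it equals the intersection of its minimal primes, so it suffices to enumerate these. I would first verify that each of $P_1, P_2, Q_1, \ldots, Q_s$ is a prime ideal containing $(I_\Pc, x_c)$. Primality of each $Q_i$ is given by Theorem~\ref{notsodifficult}. For $P_1$ and $P_2$, the quotient $S/P_j$ is the coordinate ring of a disjoint union of stack sub-polyominoes of $\Pc$ (the two stack polyominoes flanking the tall column $[c,d]$ for $P_2$; the sub-polyomino above $\Bc_\Pc$ for $P_1$), which is a tensor product of domains by Theorem~\ref{convexdimension}.

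For the converse, let $P$ be a minimal prime of $(I_\Pc, x_c)$ and let $[c, e]$ be the maximal subinterval of $[c, d]$ with $x_f \in P$ for every $f \in [c, e]$. Three cases arise. If $e = d$, then $P \supseteq P_2$, so $P = P_2$ by minimality. If $e = c$, I use a direct argument: since $x_c \in P$ and $x_{c+(0,1)} \notin P$, for each $k$ in the horizontal range of $\Bc_\Pc$ the inner $2$-minor of the interval $[c, (k, c_2+1)]$ (which is inner because the bottom row lies entirely in $\Pc$) equals $x_c x_{(k, c_2+1)} - x_{(k, c_2)} x_{c+(0,1)} \in P$; primality then forces $x_{(k, c_2)} \in P$, so $P \supseteq P_1$ and $P = P_1$. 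If $c \neq e \neq d$, Theorem~\ref{notsodifficult} provides a prime $Q_e$ with $(I_\Pc, x_c) \subseteq Q_e \subseteq P$, so $P = Q_e$ by minimality.

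It remains to match $Q_e$ with one of the listed primes in this third case. Let $M_e$ be the maximal horizontal interval of $\Pc$ through $e$. If $M_e$ contains an inside corner, then by definition $e \in \{e_1, \ldots, e_s\}$ and $Q_e = Q_i$ for the appropriate $i$. Otherwise, the unimodal row-convex structure of the stack polyomino implies that absence of an inside corner at level $e_2$ forces the rows of cells at levels $e_2 - 1$ and $e_2$ to have the same left-right extent; iterating downward, this extent remains constant from $e_2$ down to the largest inside-corner level $k^* < e_2$ (if any). Thus the horizontal coordinates $g_1, h_1$ of $[g, h]$ agree at levels $e_2$ and $k^*$, so the rectangle $[\pi(g), h]$ at level $e_2$ vertically extends the one at level $k^*$; this gives $Q_i \subseteq Q_e$ (where $e_i = (c_1, k^*)$), whence $P = Q_i$. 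If no such $k^*$ exists, the constant extent is that of $\Bc_\Pc$ itself, so $[\pi(g), h]$ contains $\Bc_\Pc$, giving $P \supseteq P_1$ and $P = P_1$.

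The main obstacle will be the combinatorial analysis in the last paragraph, namely tracking how the horizontal endpoints of the boundary interval $[g, h]$ vary as the level of $e$ moves past successive inside-corner levels in the unimodal stack polyomino. The remaining step, verifying that no listed prime strictly contains another (so that each is genuinely minimal), is routine via exhibition of distinguishing variables for each pair (for instance, $x_d \in P_2$ but $x_d$ lies in none of the other listed primes).
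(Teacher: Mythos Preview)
Your proposal is correct and follows essentially the same route as the paper: identify the candidate primes, use Theorem~\ref{notsodifficult} to show that any minimal prime over $(I_\Pc,x_c)$ coincides with some $Q_e$ (or $P_2$ when $e=d$), and then reduce $Q_e$ to one of the listed $Q_i$ or to $P_1$ by descending through the levels until an inside corner is met. The only organizational difference is that the paper verifies minimality of each listed prime directly by a contradiction argument (assuming a strictly smaller prime and producing a forbidden variable in it), whereas you deduce minimality from the combination ``every minimal prime is listed'' plus ``no listed prime strictly contains another''; both are valid and of comparable difficulty. Note also that your separate treatment of the case $e=c$ is unnecessary, since Theorem~\ref{notsodifficult} already applies there and yields $Q_c=P_1$.
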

\begin{proof}
Since $P_1 = Q_c$, it follows that $P_1$ is a prime ideal. Observe that $P_2= (J, \{x_l \: l \in [c,d]\})$ where $J$ is the ideal generated by inner 2-minors $f_{a,b} \in I_{\Pc}$ with $a,b \notin [c,d]$. With the same argument as in the proof of Theorem~\ref{notsodifficult},
it follows that $P_2$ is a prime ideal.

Suppose $P_1$ is not a minimal prime ideal of $(I_{\Pc}, x_c)$ and let $P$ be a prime ideal such that $(I_{\Pc},x_c) \subset P \subsetneq P_1$. Then there exists a vertex $a \in \Bc_{\Pc}$ such that $x_a \notin P$, and an inner 2-minor $x_a x_h - x_c x_g$ of $\Pc$ where $g, h \notin \Bc_{\Pc}$. Since $x_c \in P$ and $x_a \notin P$, it follows that $x_h \in P$ and hence $x_h \in P_1$, a contradiction. Similarly one shows that $P_2$ is a minimal prime ideal of $(I_{\Pc}, x_c)$.

Let $Q$ be a minimal prime ideal of $(I_{\Pc}, x_c)$ different from $P_1$ and $P_2$. Then there exists $k \in [c,d]$ such that $x_k \notin J$. We let $[c,e]$ be the maximal subinterval of $[c,d]$ with the property that $x_f \in Q$, for all $f \in [c,e]$. Since $e \neq d$, by applying Theorem~\ref{notsodifficult} we see that $Q_e \subset Q$. Minimality of $Q$ implies that $Q_e = Q$. Now we show that $Q=Q_i$, for some $1 \leq i \leq s$.

Let $Q_e=(I_{\Pc}, \{x_p \: p \in [\pi(g), h]\})$ as described in Theorem~\ref{notsodifficult}, and suppose that neither  $g$ nor $h$ is an inside corner of $\Pc$. If $[\pi(g),g]$ and $[\pi(h),h]$ do not contain any inside corner of $\Pc$, then $P_1 \subset Q_e$, and $Q_e$ is not a minimal prime ideal of $(I_{\Pc}, x_c)$. Otherwise, we may assume that there exists an inside corner in either $[\pi(g),g]$ or $[\pi(h),h]$. If both intervals contain an inside corner then we let $p$ be the inside corner with greater y-coordinate. We may assume that $p \in [\pi(g),g]$. Observe that $p$ is uniquely determined. Moreover, there exist two uniquely determined vertices $f \in [c,d]$ and  $q \in [\pi(h),h]$ such that $f$, $p$ and $q$ are in horizontal position. Since $\pi(g) = \pi(p)$ and $\pi(h) = \pi(q)$ which implies that $[\pi(p), q] \subset [\pi(g), h] $, and that $Q_f \subsetneq Q_e$. Hence $Q_e$ is not a minimal prime ideal if none of $g$ and $h$ is an inside corner of $\Pc$.

Let $Q_i=(I_{\Pc}, \{x_p \: p \in [\pi(g), h]\})$ as described in (2) such that either $g$ or $h$ is an inside corner of $\Pc$. We may assume that $g$ is an inside corner of $\Pc$ and $g=(k,l)$. Assume that $Q_i$ is not a minimal prime ideal and $P$ be a prime ideal such that $(I_{\Pc},x_c) \subset P \subsetneq Q_i$. Then there exists a vertex $r \in [\pi(g),h]$ such that $x_r \notin P$. Let $\Ic_1$ and $\Ic_2$ be the vertical and horizontal intervals respectively such that $r \in \Ic_1$, $\Ic_2$ and $\Ic_1, \Ic_2 \subset [\pi(g),h]$, and $\Ic_1$ and $\Ic_2$ are maximal with this property. Since $g$ is an inside corner, the vertices $g'=(k-1, l)$ and $g''=(k,l+1)$ belong to $\partial \Pc$. We see that $r$ has the property that for any vertex $s \in \Ic_1$, $s \neq r$,there exists an inner 2-minor $x_u x_s - x_r x_t$ of $\Pc$ where $t,u \in [\pi(g'), g']$. Since $P$ is a prime ideal and $x_r ,x_t \notin P$, we obtain that $x_s \notin P$, for all $s \in \Ic_1$. In particular $c \notin \Ic_1$, because $x_c \in P$. Also for any $s' \in \Ic_2$, there exists an inner 2-minor $x_r x_{t'} - x_{s'} x_{u'}$ of $\Pc$ such that $u', t'$ and $g''$ are in horizontal position. Again, because $P$ is a prime ideal and $x_r, x_{t'} \notin P$, we conclude that $x_s' \notin P$ for all $s' \in \Ic_2$. On the other hand, since $c \notin \Ic_1$ there exist $s \in I_1$ and $s' \in \Ic_2$ such that $x_r x_c - x_s x_{s'}$ is an inner 2-minor of $\Pc$. By using $x_c \in P$, it follows that either $x_s$ or $x_{s'}$ belongs to $P$, a contradiction. Hence we conclude that $Q_i$ is a minimal prime ideal of $(I_{\Pc}, x_c)$.
\end{proof}

In Figure~\ref{minimalp} we display a stack polyomino $\Pc$ and all the minimal prime ideals of $(I_{\Pc}, x_c)$ as described above. The fat dots mark the interval attached to the minimal prime ideals and the dark shadowed areas, the region where the inner $2$-minor have to be taken.

\begin{figure}[hbt]
\begin{center}
\psset{unit=0.7cm}
\begin{pspicture}(4.5,-7)(4.5,4)

\rput(2,0){
\pspolygon[style=fyp,fillcolor=light](-5,0)(-5,1)(-4,1)(-4,0)
\pspolygon[style=fyp,fillcolor=light](-4,0)(-4,1)(-3,1)(-3,0)
\pspolygon[style=fyp,fillcolor=light](-3,0)(-3,1)(-2,1)(-2,0)
\pspolygon[style=fyp,fillcolor=light](-2,0)(-2,1)(-1,1)(-1,0)
\pspolygon[style=fyp,fillcolor=light](-3,1)(-3,2)(-2,2)(-2,1)
\pspolygon[style=fyp,fillcolor=light](-4,1)(-4,2)(-3,2)(-3,1)
\pspolygon[style=fyp,fillcolor=light](-3,2)(-3,3)(-2,3)(-2,2)
\pspolygon[style=fyp,fillcolor=light](-4,2)(-4,3)(-3,3)(-3,2)
\pspolygon[style=fyp,fillcolor=light](-4,3)(-4,4)(-3,4)(-3,3)
\rput(-3,0){$\bullet$}
\rput(-2.8,-0.2){$c$}
\rput(-2.5,-0.7){$\Pc$}
}

\rput(8,0){
\pspolygon[style=fyp](-5,0)(-5,1)(-4,1)(-4,0)
\pspolygon[style=fyp](-4,0)(-4,1)(-3,1)(-3,0)
\pspolygon[style=fyp](-3,0)(-3,1)(-2,1)(-2,0)
\pspolygon[style=fyp](-2,0)(-2,1)(-1,1)(-1,0)
\pspolygon[style=fyp,fillcolor=light](-3,1)(-3,2)(-2,2)(-2,1)
\pspolygon[style=fyp,fillcolor=light](-4,1)(-4,2)(-3,2)(-3,1)
\pspolygon[style=fyp,fillcolor=light](-3,2)(-3,3)(-2,3)(-2,2)
\pspolygon[style=fyp,fillcolor=light](-4,2)(-4,3)(-3,3)(-3,2)
\pspolygon[style=fyp,fillcolor=light](-4,3)(-4,4)(-3,4)(-3,3)
\rput(-5,0){$\bullet$}
\rput(-4,0){$\bullet$}
\rput(-3,0){$\bullet$}
\rput(-2,0){$\bullet$}
\rput(-1,0){$\bullet$}
\rput(-2.9,-0.7){$P_1$}
}

\rput(14,0){
\pspolygon[style=fyp,fillcolor=light](-5,0)(-5,1)(-4,1)(-4,0)
\pspolygon[style=fyp](-4,0)(-4,1)(-3,1)(-3,0)
\pspolygon[style=fyp](-3,0)(-3,1)(-2,1)(-2,0)
\pspolygon[style=fyp,fillcolor=light](-2,0)(-2,1)(-1,1)(-1,0)
\pspolygon[style=fyp](-3,1)(-3,2)(-2,2)(-2,1)
\pspolygon[style=fyp](-4,1)(-4,2)(-3,2)(-3,1)
\pspolygon[style=fyp](-3,2)(-3,3)(-2,3)(-2,2)
\pspolygon[style=fyp](-4,2)(-4,3)(-3,3)(-3,2)
\pspolygon[style=fyp](-4,3)(-4,4)(-3,4)(-3,3)
\rput(-3,0){$\bullet$}
\rput(-3,1){$\bullet$}
\rput(-3,2){$\bullet$}
\rput(-3,3){$\bullet$}
\rput(-3,4){$\bullet$}
\rput(-2.9,-0.7){$P_2$}
}

\rput(2,-6){
\pspolygon[style=fyp](-5,0)(-5,1)(-4,1)(-4,0)
\pspolygon[style=fyp](-4,0)(-4,1)(-3,1)(-3,0)
\pspolygon[style=fyp](-3,0)(-3,1)(-2,1)(-2,0)
\pspolygon[style=fyp](-2,0)(-2,1)(-1,1)(-1,0)
\pspolygon[style=fyp](-3,1)(-3,2)(-2,2)(-2,1)
\pspolygon[style=fyp](-4,1)(-4,2)(-3,2)(-3,1)
\pspolygon[style=fyp,fillcolor=light](-3,2)(-3,3)(-2,3)(-2,2)
\pspolygon[style=fyp,fillcolor=light](-4,2)(-4,3)(-3,3)(-3,2)
\pspolygon[style=fyp,fillcolor=light](-4,3)(-4,4)(-3,4)(-3,3)
\rput(-4,0){$\bullet$}
\rput(-3,0){$\bullet$}
\rput(-2,0){$\bullet$}
\rput(-4,1){$\bullet$}
\rput(-3,1){$\bullet$}
\rput(-2,1){$\bullet$}
\rput(-2.9,-0.7){$Q_1$}
}

\rput(8,-6){
\pspolygon[style=fyp](-5,0)(-5,1)(-4,1)(-4,0)
\pspolygon[style=fyp](-4,0)(-4,1)(-3,1)(-3,0)
\pspolygon[style=fyp](-3,0)(-3,1)(-2,1)(-2,0)
\pspolygon[style=fyp,fillcolor=light](-2,0)(-2,1)(-1,1)(-1,0)
\pspolygon[style=fyp](-3,1)(-3,2)(-2,2)(-2,1)
\pspolygon[style=fyp](-4,1)(-4,2)(-3,2)(-3,1)
\pspolygon[style=fyp](-3,2)(-3,3)(-2,3)(-2,2)
\pspolygon[style=fyp](-4,2)(-4,3)(-3,3)(-3,2)
\pspolygon[style=fyp](-4,3)(-4,4)(-3,4)(-3,3)
\rput(-4,0){$\bullet$}
\rput(-3,0){$\bullet$}
\rput(-4,1){$\bullet$}
\rput(-3,1){$\bullet$}
\rput(-4,2){$\bullet$}
\rput(-3,2){$\bullet$}
\rput(-4,3){$\bullet$}
\rput(-3,3){$\bullet$}
\rput(-2.9,-0.7){$Q_2$}
}

\end{pspicture}
\end{center}
\caption{The minimal prime ideals of a stack polyomino}\label{minimalp}
\end{figure}
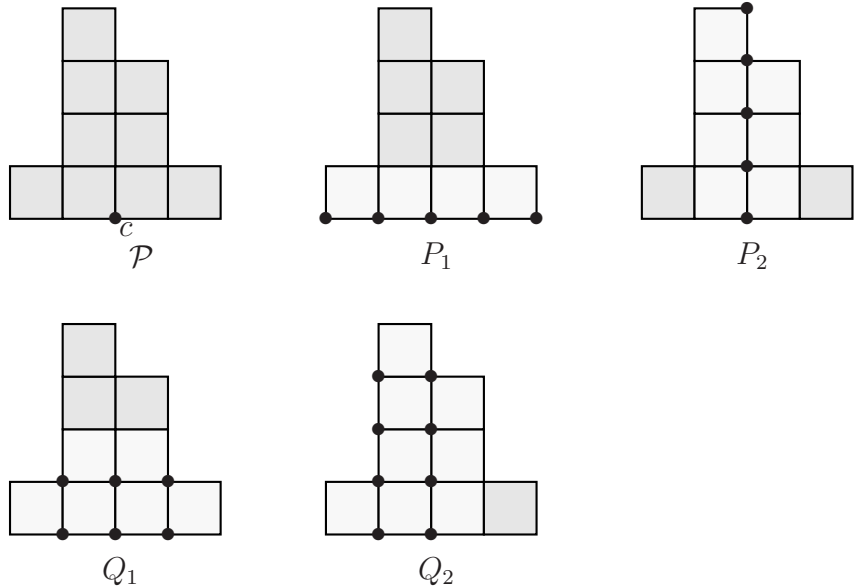

Let $q_i= Q_i / I_{\Pc}$ and $p_j= P_j / I_{\Pc}$, for $i=1,\ldots,s$ and $j=1,2$ be the residue classes $Q_i$ and $P_j$ in $K[\Pc]$.

\begin{Corollary}
\label{classgroup}
The class group $\Cl (K[\Pc])$ of $K[\Pc]$ is free of rank $s+1$ with basis
 \[
\cl(q_1), \ldots, \cl(q_s), \cl(p_1).
\]
\end{Corollary}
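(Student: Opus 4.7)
The plan is to apply Nagata's theorem describing the behavior of the divisor class group under inversion of a single element, applied to $x_c\in K[\Pc]$. By Theorem~\ref{convexdimension}, $K[\Pc]$ is a normal Cohen--Macaulay domain, hence a Krull domain. The corollary following Theorem~\ref{grobnerstack} asserts that $(I_{\Pc},x_c)$ is radical, and Corollary~\ref{minprime} identifies its minimal primes as $P_1,P_2,Q_1,\ldots,Q_s$. Their residue classes $p_1,p_2,q_1,\ldots,q_s$ have height one in $K[\Pc]$ by Krull's Hauptidealsatz, so $(x_c)=p_1\cap p_2\cap q_1\cap\cdots\cap q_s$ with multiplicities one, and we obtain the relation $\cl(p_1)+\cl(p_2)+\sum_{i=1}^{s}\cl(q_i)=0$ in $\Cl(K[\Pc])$.

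Next I would prove that $\Cl(K[\Pc][x_c^{-1}])=0$ by identifying this localization with a Laurent polynomial ring. Via the embedding $K[\Pc]\cong K[s_it_j\: (i,j)\in V(\Pc)]$ from Theorem~\ref{graphprime}, the element $x_c$ corresponds to $w=s_{i_0}t_{j_0}$ where $c=(i_0,j_0)$. Since $\Bc_{\Pc}$ spans every first coordinate occurring in $V(\Pc)$ and the maximal vertical interval $[c,d]$ spans every second coordinate, for each $(i,j)\in V(\Pc)$ both $u_i=s_i/s_{i_0}=(s_it_{j_0})/w$ and $v_j=t_j/t_{j_0}=(s_{i_0}t_j)/w$ lie in $K[\Pc][x_c^{-1}]$, and $s_it_j=u_iv_jw$. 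Hence $K[\Pc][x_c^{-1}]$ equals the Laurent polynomial ring $K[u_i,v_j,w^{\pm1}]$, which is a UFD and so has trivial class group.

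Finally, Nagata's theorem yields a surjection from the free abelian group on $\{p_1,p_2,q_1,\ldots,q_s\}$ onto $\Cl(K[\Pc])$, whose kernel consists of the principal-divisor relations supported on these primes. Any such relation is realized by some $f\in K(K[\Pc])^{\times}$ whose divisor is supported on $\{p_1,p_2,q_1,\ldots,q_s\}$; equivalently, $f$ must be a unit of $K[\Pc][x_c^{-1}]=K[u_i,v_j,w^{\pm1}]$, forcing $f=\lambda\cdot x_c^{k}$ with $\lambda\in K^*$ and $k\in\ZZ$. Such $f$ gives $\div(f)=k(p_1+p_2+\sum q_i)$, so the kernel is exactly $\ZZ\cdot(p_1+p_2+\sum q_i)$, and $\Cl(K[\Pc])$ is free of rank $s+1$ with basis $\cl(q_1),\ldots,\cl(q_s),\cl(p_1)$.

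The main technical obstacle is the explicit identification $K[\Pc][x_c^{-1}]=K[u_i,v_j,w^{\pm1}]$ in the second step: one must verify both algebraic independence of $u_i,v_j,w$ and that they truly generate the whole localization, not merely a subring. A Krull-dimension count (both sides have dimension $|V(\Pc)|-|\Pc|$ by Theorem~\ref{convexdimension}) combined with the algebraic independence of the ambient $s_i,t_j$ closes the argument; the remainder is a direct application of the Nagata exact sequence together with the unit computation in the Laurent ring.
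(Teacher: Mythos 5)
Your proof is correct and follows essentially the same Nagata-localization strategy as the paper: both invert $x_c$, invoke Corollary~\ref{minprime} to produce the generators and the single relation $\sum\cl(q_i)+\cl(p_1)+\cl(p_2)=0$, show that $\Cl(K[\Pc]_{x_c})=0$, and then identify the units of the localization as $\lambda x_c^t$ to pin down the relation module. The only difference is in how you establish that $K[\Pc]_{x_c}$ is factorial: the paper uses the inner $2$-minors $x_cx_p-x_rx_s$ directly to see that $K[\Pc]_{x_c}$ is a localization of $K[\{x_l:l\in\Bc_{\Pc}\cup[c,d]\}]$ and concludes algebraic independence by comparing with $\dim K[\Pc]$, whereas you pass through the toric embedding $x_{(i,j)}\mapsto s_it_j$ and write the localization as the Laurent ring $K[u_i,v_j,w^{\pm 1}]$. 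Your route has the mild advantage that the algebraic independence of $u_i=s_i/s_{i_0}$, $v_j=t_j/t_{j_0}$, $w=s_{i_0}t_{j_0}$ is visible at once from the independence of their exponent vectors in $\ZZ^{m+n}$, so the dimension count you mention is really an alternative confirmation rather than a necessity; the generation claim is similarly immediate from $s_it_j=u_iv_jw$ once one notes (as you do) that for a stack polyomino every $(i,j_0)$ and $(i_0,j)$ lies in $V(\Pc)$. In short: same argument, slightly more transparent packaging.
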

\begin{proof}
Let $[c,d]$ be a vertical interval of $\Pc$ of maximal size and $[a,b] = \Bc_{\Pc}$. Then for any $p \in \Pc \setminus ([a,b] \cup [c,d] )$, there exists an inner 2-minor $x_c x_p - x_r x_s$ of $\Pc$ where $r \in [a,b]$ and $s \in [c,d]$. Thus, in $K[\Pc]_{x_s}$ we have $x_p = x_r x_s x_c^{-1}$. It follows that $K[\Pc]_{x_c} = K [\{x_l \: l \in [a,b] \cup [c,d]]_{x_c}$. From Theorem~\ref{convexdimension}, we know that $\dim K[\Pc]_{x_c} =\dim K[\Pc] = |[a,b]| + |[c,d]| - 1$. Hence $K [\{x_l \: l \in [a,b] \cup [c,d]\}]$ is a polynomial ring. Consequently, $K[\Pc]_{x_c}$ is factorial. By applying Nagata's Lemma \cite[Corollary 7.2]{Nag}, Corollary~\ref{minprime} implies that $\Cl(K[\Pc])$ is generated by $ \cl(q_1), \ldots, \cl(q_s), \cl(p_1), \cl(p_2)$. Since $(x_c) = \bigcap_{i=1}^{s} q_i \cap p_1 \cap p_2$, it follows that
\[
\sum_{i=1}^{r} \cl(q_i) + \cl(p_1) + \cl(p_2) = 0
\]
We claim that the above relation generates the relation module of the class group. Then the claim yields the desired assertion.

Let  $\sum_{i=1}^{r} v_i \cl(q_i) + u_1 \cl(p_1) + u_2 \cl(p_2) = 0$ be an arbitrary relation in the class group $\Cl (K[\Pc])$. Then $\sum_{i=1}^{r} v_i \div(q_i) + u_1 \div(p_1)  + u_2 \div(p_2)$ is a principal divisor $\div(g)$ in  $\Div (K[\Pc])$. Since $x_c \in q_i , p_j$ for all $i$ and $j$, the divisors $\div(q_i)$ and $\div(p_j)$ are mapped to $0$ under the canonical map $\Div (K[\Pc]) \rightarrow \Div(K[\Pc]_{x_c})$. This implies that $\div(g)$ is also mapped to $0$. Hence $g$ is a unit in $K[\Pc]_{x_c}$. The only units in  $K[\Pc]_{x_c}$ are scalar multiples of powers of $x_c$, say $g= \lambda x_c ^{t}$ with $t \in \ZZ$. Therefore,
\[
\sum_{i=1}^{r} v_i \div(q_i) + u_1 \div(p_1)  + u_2 \div(p_2)= \div(g) = \div(x_c^t) = t \div(x_c).
\]
 Since  $\div(x_c) = \sum_{i=1}^{r} \div(q_i) +\div(p_1)  + \div(p_2)$, the claim holds.
\end{proof}

First, we fix some notation. As before, let $[c,d]$ be a vertical interval of $\Pc$ of maximal size, and  $e_1, \ldots, e_s$ be the elements of $[c,d]$ with the property that the maximal horizontal interval $[g_i, h_i]$ of $\Pc$ with  $e_i \in [g_i, h_i]$ contains an inside corner of $\Pc$. We furthermore let $e_0 = c$ and $e_{s+1} = d$, and for $i=0$ and $i=s+1$, we let $[g_i, h_i]$ be the maximal interval of $\Pc$ with $e_i \in [g_i, h_i]$. Now we introduce the following numbers. We set $m_j = \size ([g_j, h_j])$, for $j=0, \ldots, s$ and $m_{s+1} = 0$. Finally we set $n_{j} = \size [e_j , e_{j+1}]$, for $j=0, \ldots, s$. For the sake of uniformity, we set $q_0 = p_1$.

\begin{Theorem}
\label{canonicalclass}
Let $\cl(\omega)$ be the canonical class of $K[\Pc]$. Then
\[
\sum_{j=0}^{s} (m_j - \sum_{i=j}^{s} n_i) \cl(q_j) \quad \text {for $j=0, \ldots, s$}
 \]
is the representation of $\cl(\omega)$ with respect to the basis of $\Cl(K[\Pc])$ given in Corollary~\ref{classgroup}.
\end{Theorem}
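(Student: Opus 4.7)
My plan is to exploit the identification, from the proof of Theorems~\ref{graphprime} and~\ref{convexdimension}, of $K[\Pc]$ with the normal edge ring of the bipartite graph attached to $\Pc$; as such, the Danilov--Stanley description of the canonical module applies and gives $\omega$ as the ideal generated by the monomials corresponding to lattice points in the (relative) interior of the defining cone. Equivalently, $\omega=\bigcap_{\rho} P_\rho$ (intersection of all facet primes, i.e., all height-$1$ torus-invariant primes), so that as a divisor class
\[
\cl(\omega)\;=\;\sum_{\rho}\cl(P_\rho)\;\in\;\Cl(K[\Pc]),
\]
where $\rho$ runs over all facets of the cone.

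I plan to compute this sum in three steps. First, the facet primes split into those containing $x_c$---by Corollary~\ref{minprime}, exactly $p_1,p_2,q_1,\ldots,q_s$---and the rest. The relation $\cl(p_1)+\cl(p_2)+\sum_{i=1}^{s}\cl(q_i)=0$ proved in Corollary~\ref{classgroup} makes the contribution of the first group zero, leaving
\[
\cl(\omega)\;=\;\sum_{F\not\ni x_c}\cl(P_F).
\]
Second, for each remaining facet prime $P_F$, since $K[\Pc]_{x_c}$ is a polynomial ring (from the proof of Corollary~\ref{classgroup}) and hence a UFD, $P_F\cdot K[\Pc]_{x_c}$ is principal, say $P_F\cdot K[\Pc]_{x_c}=(f_F)$ for some $f_F\in K(\Pc)$; the principal divisor of $f_F$ in $K[\Pc]$ expresses $\cl(P_F)$ as a linear combination of the basis classes $\cl(q_0),\ldots,\cl(q_s)$ together with $\cl(p_2)$. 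Third, summing these expressions and substituting $\cl(p_2)=-\sum_{j=0}^{s}\cl(q_j)$ via the same relation yields the representation of $\cl(\omega)$ in the basis of Corollary~\ref{classgroup}, with the coefficient of $\cl(q_j)$ equal to $m_j-\sum_{i=j}^{s}n_i$; this matches the combinatorial count of ``horizontal extent through $e_j$'' minus ``vertical extent from $e_j$ to $d$''.

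The main obstacle is the first step, namely the careful identification of every facet of the bipartite edge cone of a stack polyomino. For rectangles (where the bipartite graph is complete) the facets are the classical $\{s_i=0\}$ and $\{t_j=0\}$, but for a general stack polyomino the ``step'' structure at each inside corner produces additional facets, whose enumeration and whose principal divisors---derived in the spirit of the prime-ideal argument of Theorem~\ref{notsodifficult}---must be tracked carefully, so that the resulting sum of classes reorganizes itself into precisely the coefficients $m_j-\sum_{i=j}^{s}n_i$ of the stated formula.
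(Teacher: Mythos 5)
Your proposal takes a genuinely different route from the paper. The paper argues by induction on $s$ (the number of inside corners): it localizes $K[\Pc]$ at $x_{h_{s+1}}$, identifies the localization with a localization of $K[\Pc'][X]$ for a stack polyomino $\Pc'$ with one fewer inside corner, transports the canonical class along the induced map $\Cl(K[\Pc])\to\Cl(K[\Pc'])$, and then determines the remaining unknown coefficient $\mu_s$ by localizing away all but the outside corners and invoking the rectangle case from \cite[Theorem 8.8]{BV}. Your plan, in contrast, invokes the Danilov--Stanley description of the canonical module to write $\cl(\omega)=\sum_\rho\cl(P_\rho)$, splits the facets by whether $x_c\in P_\rho$, kills the first group via the relation from Corollary~\ref{classgroup}, and proposes to handle the second group by Nagata's lemma in the UFD $K[\Pc]_{x_c}$. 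The structure of the argument is sound and, if carried out, would avoid the induction entirely.

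However, there is a genuine gap, and you identify it yourself: the entire content of the theorem is in the coefficients $m_j-\sum_{i\ge j}n_i$, and your plan never produces them. Steps two and three require a complete enumeration of the facets of the edge cone of the bipartite graph attached to a general stack polyomino, together with, for each facet $F$ not containing $x_c$, an explicit $f_F\in K[\Pc]$ with $P_F K[\Pc]_{x_c}=(f_F)$ and a computation of the full divisor $\operatorname{div}(f_F)$ in $K[\Pc]$; only then can one sum and reorganize. For a rectangle this is classical, but for a stack polyomino with inside corners these facets have no simple closed description given in the paper or referenced from it, and the final sentence of your proposal merely asserts that the sum ``reorganizes itself'' into the right coefficients without proof. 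A smaller issue: $K[\Pc]_{x_c}$ is a \emph{localization} of a polynomial ring (hence a UFD), not literally a polynomial ring, though this does not affect the applicability of Nagata's lemma. As written, the proposal is a plausible strategy rather than a proof; the hard combinatorial computation the paper circumvents by its localization-and-induction device is precisely what is left undone.
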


\begin{proof}
We proceed by induction on $s$. If $s=0$, then $\Pc$ has no inside corners and desired formula follows from \cite[Theorem 8.8]{BV}. Now suppose that $s>0$. Localizing $K[\Pc]$ at $x_{h_{s+1}}$, we see that $K[\Pc]_{x_{h_{s+1}}}$ is isomorphic to the localization at $x_{h_{s+1}}$ of the polynomial ring extension $K[\Pc'][X]$ where $X= \{ x_a \: \{ a \in  [g_{s+1 , h_{s+1}}] \cup [h_s, h_{s+1}], a \neq h_s  \} \}$,  and where  $\Pc'$ is again a stack polyominoe with $n'_i = n_i$ and $m'_i = m_i - m_s$, for $i=1 , \ldots, s-1$, see Figure~\ref{local}.

\begin{figure}[hbt]
\begin{center}
\psset{unit=0.9cm}
\begin{pspicture}(4.5,-0.5)(4.5,4)
\rput(-6,0){
\pspolygon[style=fyp,fillcolor=light](4,0)(4,1)(5,1)(5,0)
\pspolygon[style=fyp,fillcolor=light](5,0)(5,1)(6,1)(6,0)
\pspolygon[style=fyp,fillcolor=light](6,0)(6,1)(7,1)(7,0)
\pspolygon[style=fyp,fillcolor=light](7,0)(7,1)(8,1)(8,0)
\pspolygon[style=fyp,fillcolor=light](8,0)(8,1)(9,1)(9,0)
\pspolygon[style=fyp,fillcolor=light](8,1)(8,2)(9,2)(9,1)
\pspolygon[style=fyp,fillcolor=light](5,1)(5,2)(6,2)(6,1)
\pspolygon[style=fyp,fillcolor=light](6,1)(6,2)(7,2)(7,1)
\pspolygon[style=fyp,fillcolor=light](7,1)(7,2)(8,2)(8,1)
\pspolygon[style=fyp,fillcolor=light](5,2)(5,3)(6,3)(6,2)
\pspolygon[style=fyp,fillcolor=light](6,2)(6,3)(7,3)(7,2)
\pspolygon[style=fyp,fillcolor=light](7,2)(7,3)(8,3)(8,2)
\pspolygon[style=fyp,fillcolor=light](6,3)(6,4)(7,4)(7,3)
\rput(7,4){$\bullet$}
\rput(7.5,4.2){$h_{s+1}$}
\rput(6.5,-0.5){$\Pc$}
}
\rput(2,0){
\pspolygon[style=fyp,fillcolor=light](5,0)(5,1)(6,1)(6,0)
\pspolygon[style=fyp,fillcolor=light](6,0)(6,1)(7,1)(7,0)
\pspolygon[style=fyp,fillcolor=light](7,0)(7,1)(8,1)(8,0)
\pspolygon[style=fyp,fillcolor=light](8,0)(8,1)(9,1)(9,0)
\pspolygon[style=fyp,fillcolor=light](8,1)(8,2)(9,2)(9,1)
\pspolygon[style=fyp,fillcolor=light](6,1)(6,2)(7,2)(7,1)
\pspolygon[style=fyp,fillcolor=light](7,1)(7,2)(8,2)(8,1)
\pspolygon[style=fyp,fillcolor=light](6,2)(6,3)(7,3)(7,2)
\pspolygon[style=fyp,fillcolor=light](7,2)(7,3)(8,3)(8,2)
\rput(7,-0.5){$\Pc'$}
}
\end{pspicture}
\end{center}
\caption{}\label{local}
\end{figure}

Since $\Cl(K[\Pc'][X]_{x_{h_{s+!}}}) = \Cl(K[\Pc'])$, we obtain a natural map $ \alpha \: \Cl(K[\Pc]) \rightarrow \Cl(K[\Pc'])$. Let $p'_1=q'_0, \ldots, q'_{s-1}, p'_2$ be the corresponding generators of $\Cl(K[\Pc'])$. Then $\alpha(\cl(q_i)) = \cl(q'_i)$ for $i=0, \ldots, s-1$, and $\alpha(\cl(q_s)) = \cl(p'_2) = - \sum_{i=0}^{s-1} \cl (q'_i)$.

Let $\cl(\omega) = \sum_{i=0}^{s} \mu_i \cl(q_i)$. Since the canonical $\cl(\omega)$ of $K[\Pc]$ is mapped to the canonical class $\cl(\omega')$ of $K[\Pc']$, we have

\[
\cl(\omega') = \sum_{j=0}^{s-1} \mu_i \cl(q'_i) + \mu_s \cl(q'_s) = \sum_{i=0}^{s-1} (\mu_i - \mu_s) \cl(q'_i).
\]

Applying the induction hypothesis we have
\begin{eqnarray}
\label{eqn}
\mu_i - \mu_s = m'_i - \sum_{j=i}^{s-1} n'_i = m_i - m_s - \sum_{i=1}^{s-1} n_i
\end{eqnarray}

Localizing $K[\Pc]$ at the variables corresponding to the outside corners of $\Pc$ different from $g_0, h_0, g_{s+1}$ and $h_{s+1}$ and using again \cite[Theorem 8.8]{BV}, we see that $\mu_s = m_s - n_s$. Hence the desired formula follows from (\ref{eqn}).
\end{proof}

As an immediate consequence of Theorem~\ref{canonicalclass}, we have the following
\begin{Corollary}
The $K$-algebra $K[P]$ is Gorenstein if and only if $m_i = \sum_{j=i}^{s} n_j$, for $i=0, \ldots, s$.
\end{Corollary}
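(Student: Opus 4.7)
The plan is to derive this immediately from Theorem~\ref{canonicalclass} together with the standard criterion that a normal Cohen--Macaulay domain is Gorenstein precisely when its canonical class is trivial in the divisor class group. By Theorem~\ref{convexdimension}, since any stack polyomino is convex, $K[\Pc]$ is a normal Cohen--Macaulay domain, so this criterion applies.

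First I would recall that for a normal Cohen--Macaulay domain $R$, the canonical module $\omega_R$ exists as a divisorial ideal, and $R$ is Gorenstein if and only if $\omega_R$ is a free $R$-module of rank one, which in turn is equivalent to $\cl(\omega) = 0$ in $\Cl(R)$. Applying this to $R = K[\Pc]$, we need exactly that $\cl(\omega) = 0$ in $\Cl(K[\Pc])$.

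Next, Corollary~\ref{classgroup} tells us that $\cl(q_0) = \cl(p_1), \cl(q_1), \ldots, \cl(q_s)$ is a \emph{free} basis of $\Cl(K[\Pc])$. Combining this with the explicit expression
\[
\cl(\omega) = \sum_{j=0}^{s}\Bigl(m_j - \sum_{i=j}^{s} n_i\Bigr) \cl(q_j)
\]
provided by Theorem~\ref{canonicalclass}, the vanishing of $\cl(\omega)$ is equivalent to the vanishing of each coordinate with respect to this basis. Hence $K[\Pc]$ is Gorenstein if and only if $m_j - \sum_{i=j}^{s} n_i = 0$ for every $j = 0, \ldots, s$, which is precisely the stated condition. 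There is no real obstacle here; the work has been done in Theorem~\ref{canonicalclass} and Corollary~\ref{classgroup}, and the only additional ingredient is the standard class-group characterization of the Gorenstein property for normal Cohen--Macaulay domains.
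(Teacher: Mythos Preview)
Your proof is correct and follows exactly the route the paper intends: the paper simply states this corollary as ``an immediate consequence of Theorem~\ref{canonicalclass}'' without further argument, and you have spelled out precisely the implicit reasoning, namely that $K[\Pc]$ is a normal Cohen--Macaulay domain by Theorem~\ref{convexdimension}, that Gorensteinness is equivalent to $\cl(\omega)=0$, and that the coefficients in Theorem~\ref{canonicalclass} with respect to the free basis of Corollary~\ref{classgroup} must all vanish.
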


Figure~\ref{gorenstein} shows example of a Gorenstein stack polyomino and a non-Gorenstein stack polyomino.

\begin{figure}[hbt]
\begin{center}
\psset{unit=0.9cm}
\begin{pspicture}(4.5,-0.5)(4.5,4)
\rput(-6.5,0){
\pspolygon[style=fyp,fillcolor=light](5,0)(5,1)(6,1)(6,0)
\pspolygon[style=fyp,fillcolor=light](6,0)(6,1)(7,1)(7,0)
\pspolygon[style=fyp,fillcolor=light](7,0)(7,1)(8,1)(8,0)
\pspolygon[style=fyp,fillcolor=light](8,0)(8,1)(9,1)(9,0)
\pspolygon[style=fyp,fillcolor=light](5,1)(5,2)(6,2)(6,1)
\pspolygon[style=fyp,fillcolor=light](6,1)(6,2)(7,2)(7,1)
\pspolygon[style=fyp,fillcolor=light](7,1)(7,2)(8,2)(8,1)
\pspolygon[style=fyp,fillcolor=light](5,2)(5,3)(6,3)(6,2)
\pspolygon[style=fyp,fillcolor=light](6,2)(6,3)(7,3)(7,2)
\pspolygon[style=fyp,fillcolor=light](6,3)(6,4)(7,4)(7,3)
\rput(7,-0.5){Gorenstein}
}
\rput(2,0){
\pspolygon[style=fyp,fillcolor=light](4,0)(4,1)(5,1)(5,0)
\pspolygon[style=fyp,fillcolor=light](5,0)(5,1)(6,1)(6,0)
\pspolygon[style=fyp,fillcolor=light](6,0)(6,1)(7,1)(7,0)
\pspolygon[style=fyp,fillcolor=light](7,0)(7,1)(8,1)(8,0)
\pspolygon[style=fyp,fillcolor=light](8,0)(8,1)(9,1)(9,0)
\pspolygon[style=fyp,fillcolor=light](5,1)(5,2)(6,2)(6,1)
\pspolygon[style=fyp,fillcolor=light](6,1)(6,2)(7,2)(7,1)
\pspolygon[style=fyp,fillcolor=light](7,1)(7,2)(8,2)(8,1)
\pspolygon[style=fyp,fillcolor=light](5,2)(5,3)(6,3)(6,2)
\pspolygon[style=fyp,fillcolor=light](6,2)(6,3)(7,3)(7,2)
\pspolygon[style=fyp,fillcolor=light](6,3)(6,4)(7,4)(7,3)
\rput(6.5,-0.5){Not Gorenstein}
}
\end{pspicture}
\end{center}
\caption{}\label{gorenstein}
\end{figure}

\newpage
{}


\begin{thebibliography}{}
\bibitem{BH} W.\ Bruns, J.\ Herzog,  Cohen--Macaulay rings, Cambridge University Press, London, Cambridge, New York, (1993)
\bibitem{A} A.\ Conca, Ladder determinantal rings, J. Pure Appl. Algebra  {\bf 98}, 119--134 (1995)
\bibitem{Nag} R.\ Fossum, The Divisor Class of Group of a Krull Domain, Springer, (1973)
\bibitem{HHH} J.\ Herzog, T.\ Hibi, F.\ Hreinsd\'ottir, T.\ Kahle, J.\ Rauh. Binomial edge ideals and conditional independence statements, Adv. Appl. Math. \textbf{45}, 317--333 (2010).
\bibitem{HH} J.\ Herzog, T.\ Hibi, Ideals generated by adjacent 2--minors, Preprint 2010, arXiv:1012.5789v3
\bibitem{Ho} M.\ Hochster and J.A.\ Eagon, Cohen--Macaulay rings, invariant theory and the generic perfection of determinantal loci. Amer.J.Math. {\bf 93}, 1020--1058 (1971)

\bibitem{HSS} S.\ Ho\c{s}ten, S.\ Sullivant, Ideals of adjacent minors, J. Algebra {\bf 277} , 615--642 (2004)
\bibitem{OH} H.\ Ohsugi, T.\ Hibi, Toric ideals generated by quadratic binomials. J. Algebra {\bf 218}, 509--527 (1999)



\bibitem{OH2} H.\ Ohsugi, T.\ Hibi, Special simplices and gorenstein toric rings. J. Combinatorial Theory Series A {\bf 113}, (2006)

\bibitem{HS} J.\ Shapiro, S.\ Ho\c{s}ten, Primary decomposition of lattice basis ideals. J.Symbolic Computation {\bf 29}, 625--639 (2000)
\bibitem{St} B.\ Sturmfels, Gr\"obner Bases and Convex Polytopes, Amer. Math. Soc., Providence, RI, (1995)
\bibitem{ES} B.\ Sturmfels, D.\ Eisenbud,  Binomial ideals, Duke Math. J. {\bf 84} , 1--45 (1996)



\bibitem{BV} U.\ Vetter, W.\ Bruns,  Determinantal rings, Lecture Notes in Mathematics, Springer, (1988)

\bibitem{V} R.\ Villarreal, Monomial Algebras. Marcel Dekker


\end{thebibliography}
\end{document}